\newtheorem{theorem}{Theorem}
\newtheorem{definition}[theorem]{Definition}
\newtheorem{lemma}[theorem]{Lemma}
\newtheorem{corollary}[theorem]{Corollary}
\newtheorem{proposition}[theorem]{Proposition}
\newtheorem{remark}{Remark}
\newtheorem{notation}[theorem]{Notation}
\newtheorem{example}[theorem]{Example}
\newtheorem{claim}[theorem]{Claim}
\newtheorem{problem}{Problem}
\DeclarePairedDelimiter{\norm}{\lVert}{\rVert}
\DeclarePairedDelimiter{\abs}{\vert}{\vert}
\newcommand{\Gfab}{\cG_f(\bma,\bmb)}
\newcommand{\cG}{\mathcal{G}}
\newcommand{\RR}{\mathbb{R}}
\newcommand{\CC}{\mathbb{C}}
\newcommand{\ZZ}{\mathbb{Z}}
\newcommand{\NN}{\mathbb{N}}
\newcommand{\QQ}{\mathbb{Q}}
\newcommand{\KK}{\mathbb{K}}
\newcommand{\cZ}{\mathcal{Z}}
\renewcommand{\NN}{\mathbb{N}}
\newcommand{\VV}{\mathbb{V}}
\newcommand{\PP}{\mathbb{P}}
\newcommand{\cI}{\mathcal{I}}
\newcommand{\VVK}{\VV_{\KK}}
\newcommand{\VVKT}{\VV_{\KK^{*}}}
\newcommand{\scX}{\mathscr{X}}
\definecolor{OliveGreen}{rgb}{0,0.6,0}
\newcommand{\image}{\mathtt{Im}}
\newcommand{\NP}{\mathsf{NP}\xspace}
\newcommand{\Kinf}{\ensuremath\mathcal{K}_{\infty}}
\newcommand{\Kall}{\ensuremath\mathcal{K}}
\newcommand{\lcoeff}{\mathtt{lc}\xspace}
\newcommand{\x}{\bm{x}}
\newcommand{\bma}{\bm{a}}
\newcommand{\bmb}{\bm{b}}
\newcommand{\y}{\bm{y}}
\newcommand{\p}{\bm{p}}
\newcommand{\bj}{\bm{j}}
\def\set#1{\{ #1 \}}
\def\Set#1{\left\{ #1 \right\}}
\def\Bigbar#1{\mathrel{\left|\vphantom{#1}\right.}}
\def\Setbar#1#2{\Set{#1 \Bigbar{#1 #2} #2}}
\def\Bigbar#1{\mathrel{\left|\vphantom{#1}\right.}}
\def\Setbar#1#2{\Set{#1 \Bigbar{#1 #2} #2}}
\renewcommand{\emptyset}{\varnothing}
\DeclareMathOperator{\cdim}{cdim}				%
\DeclareMathOperator{\Crit}{Crit}				%
\DeclareMathOperator{\conv}{conv}
\DeclareMathOperator{\supp}{supp}				%
\newcommand{\bdel}{\bm{\Delta}}					%
\newcommand{\bgam}{\bm{\Gamma}}					%
\newcommand{\unze}{\bm{0}}				%
\DeclareMathOperator{\grad}{grad}						%
\newcommand{\cBf}{\mathcal{B}_f}%
\newcommand{\cBfz}{\mathcal{B}^0_f}%
\newcommand{\cBfi}{\mathcal{B}^{\infty}_f}%
\newcommand{\cKf}{\mathcal{K}_f}%
\newcommand{\cK}{\mathcal{K}}%
\newcommand{\cKfz}{\mathcal{K}^0_f}%
\newcommand{\cKfi}{\mathcal{K}^{\infty}_f}%
\newcommand{\cKFi}{\mathcal{K}^{\infty}_F}%
\newcommand{\bmf}{\bm{f}}%
\newcommand{\fstar}{\left. f\right|_{*}}%
\newcommand{\Rgf}{\mathcal{D}_\Gamma(f)}
\newcommand{\Dbg}{D_{\bgam}}%
\newcommand{\Ibdel}{\mathcal{I}(\bdel)}
\newcommand{\Obdel}{\mathcal{O}(\bdel)}
\newcommand{\IObdel}{\mathcal{I}\!\mathcal{O}(\bdel)}
\newcommand{\bmg}{\bm{g}}
\newcommand{\bmgI}{\bmg_I}
\DeclareMathOperator{\Jac}{Jac}
\DeclareMathOperator{\Proj}{Proj}
\newcommand{\f}{\bm{f}\xspace}							%
\newcommand{\g}{\bm{g}\xspace}
\newcommand{\Rnn}{\R^n_{\geq 0}}
\newcommand{\R}{\mathbb{R}}
\newcommand{\C}{\mathbb{C}}
\newcommand{\Z}{\mathbb{Z}}
\newcommand{\N}{\mathbb{N}}
\newcommand{\Q}{\mathbb{Q}}
\newcommand{\K}{\mathbb{K}}
\newcommand{\cR}{\mathcal{R}}
\newcommand{\V}{\mathbb{V}}
\newcommand{\TTnK}{(\K^*)^n}
\newcommand{\cB}{\mathcal{B}}
\newcommand{\VsK}{\mathbb{V}^{*}_{\!\K}}
\newcommand{\FrS}{\left.F\right|_{S}}
\newcommand{\Fr}{\left.F\right|}
\newcommand{\fr}{\left.f\right|}
\newcommand{\VK}{\mathbb{V}_{\!\K}}
\newcommand{\VC}{\mathbb{V}_{\!\C}}
\newcommand{\VR}{\mathbb{V}_{\!\R}}
\newcommand{\cS}{\mathcal{S}}
\newcommand{\cC}{\mathcal{C}}
\def\norm#1{\| #1 \|}
\def\normi#1{\norm{#1}_{\infty}}
\newcommand{\OB}{\mathcal{O}_B}
\newcommand{\OO}{\mathcal{O}}
\newcommand{\sOB}{\widetilde{\mathcal{O}}_B}
\newcommand{\sOO}{\widetilde{\mathcal{O}}}
\newcommand{\balpha}{\bm{\alpha}}
\title{Bounds on the infimum of polynomials over a generic semi-algebraic set using asymptotic critical values}
\author{
	Boulos El Hilany  
	\thanks{Institut f\"ur Analysis und Algebra, TU Braunschweig, Germany,
	 {boulos.hilani@gmail.com, b.el-hilany@tu-braunschweig.de}}
	\and
	\textbf{Elias Tsigaridas}
	\thanks{Inria Paris and Institut de 
    Math{\'e}matiques de Jussieu--Paris Rive Gauche, Sorbonne
    Universit\'e and Paris Universit\'e, {elias.tsigaridas@inrial.fr}}
}
\begin{document}

\maketitle
	
\begin{abstract}
We present precise bit and degree estimates
for the optimal value of the polynomial optimization problem 
$f^*:=\inf_{x\in \scX}~f(x)$, 
where $\scX$ is a semi-algebraic set satisfying some non-degeneracy conditions.
Our bounds depend on the degree, the bitsize of $f$, and the polynomials defining $\scX$, 
and are single exponential with respect to the number of variables.
They generalize the single exponential bounds from Jeronimo, Perrucci, and Tsigaridas 
(SIAM Journal on Optimization, 23(1):241–255, 2013)
for the minimum of a polynomial function on a compact connected component of a basic closed semi-algebraic set.

The tools that we use allow us to obtain specialized bounds and dedicated algorithms for two
large families of polynomial optimization problems
in which the optimum value might not be attained. The first family forms a dense set of real polynomial functions with a fixed collection of Newton polytopes; we provide the best approximation yet for the bifurcation set, which contains the optimal value, and we deduce an effective method for computations. As for the second family, we consider any unconstrained polynomial optimization problem; we present more precise bounds, together with a better bit complexity estimate of an algorithm to compute the optimal value. 
	
\keywords{asymptotic critical value 
\and bifurcation set 
\and polynomial optimization 
\and unconstrained polynomial optimization 
\and Newton polytope 
\and bit complexity}

\end{abstract}

\maketitle
\newpage
\tableofcontents
\newpage

\section{Introduction}

Let $f:\scX\longrightarrow \R$ be a semi-algebraic function over a basic semi-algebraic set $\scX\subset\R^n$.
We consider the problem of computing effective bounds on the infimum $f^*:=\inf_{x\in \scX}f(x)$, assuming it lies in $\R$, as a function of the number of variables, the number of polynomials defining the semialgebraic set, their degrees and bitsize. 
\begin{problem}\label{prb:main}
The optimal value of the polynomial optimization problem 
$f^* = \inf_{x\in \scX}f(x)$ is a real algebraic number, 
when the defining polynomials have rational coefficients. 
Provide an 
effective lower bound on its value
and an upper bound on degree of the defining polynomial
that depend on the number of variables $n$, the degrees, and the coefficient (bit)size of $f$ and the polynomials defining $\scX$.
\end{problem}

The computation of effective, explicit, and if possible (asymptotically) optimal bounds on the size and degree of the optimal value is an important problem on its own,
as these are intrinsic quantities of the polynomial optimization problem that characterize and measure its difficulty. 
Furthermore, they are fundamental quantities for the analysis of various symbolic and numerical algorithms,
with a wide range of applications, e.g.,~\cite{song2017low,schaefer2017fixed,burr2020complexity,la2024some}. As the optimal value is an algebraic number, the degree of the defining polynomial is essential in complexity statements, as it measures the algebraic complexity of the problem, e.g., \cite{draisma2016euclidean,bajaj1988algebraic}.


If the semi-algebraic subset $\scX$ is compact, then the infimum of $f$ is attained at $\scX$.
In this case, explicit bounds for Problem~\ref{prb:main} 
and an effective method for computing the infimum already exist. 
 Namely, if $\scX$ is defined by $r$ equalities, $s$ inequalities with polynomials in $\Z[x_1,\ldots,x_n]$, having degrees at most $d$, and coefficients of absolute values at most $H$, then either $f^*$ is not reached over $\scX$, or is an algebraic number of degree at most 
 \begin{equation}
 \label{eq:alg_deg_attained}
 	\max_{0 \leq i \leq \min\{r+s, n\}} {n \choose i } d^i (d-1)^{n-i} 
 \leq 2^{n-1} d^n,
 \end{equation}
 and if it is not zero, then 
\begin{equation}
	\label{eq:bounds_attained}
	|f^*|\geq  (2^{5 -\frac{n}{2}} \tilde{H}d^n)^{- n2^nd^n},
\end{equation} 
where $\tilde{H}:=\max(H,~n+r+s)$~\cite[Theorem 1.1]{jpt-min-siopt-2013}.
If we let $H \leq 2^{\tau}$, then 
$\abs{f^{*}} \geq 2^{-\sOO(n d^n \tau)}$.
We notice that the bound is single exponential with respect to the number of variables; hence polynomial when the number of variables is fixed.
Even more, it is (asymptotically) optimal.
Jeronimo et al \cite{jpt-min-siopt-2013} proved the inequality in \eqref{eq:bounds_attained}
by bounding the isolated roots of carefully deformed polynomial systems; this was the first precise bound for the optimum value of the polynomial optimization problem.
While the initial motivation in \cite{jpt-min-siopt-2013} was to approximate the minimum distance between two semi-algebraic sets, 
the bounds has already several other applications, e.g., \cite{song2017low,schaefer2017fixed}.
Let us also mention that there is also an efficient (symbolic) algorithm 
to compute it due to Jeronimo and Perrucci \cite{jeronimo2014probabilistic}. 

While the bound of \eqref{eq:bounds_attained} is quite important, 
the assumption that the optimum value is attained is rather restrictive in several problems and applications.
Not all semi-algebraic functions attain their infima; as an example, consider the 
infimum of the function $x\mapsto 1/x$ over $\{x>0\}$ is $0$.
We make a step towards closing this gap in the literature
by considering the Problem~\ref{prb:main} where $f$ admits a non-trivial infimum $f^*\in\R$, but it is not attained on $\scX$;
under some assumptions on $\scX$ that we detail in the sequel.
The computation of the optimum value, both in the \emph{"attained"} and in the \emph{"unattained"} case, 
	appears frequently in science and engineering. 
	Namely, systems and problems in applications, e.g., computer aided design~\cite{farin2002curves}, robotics~\cite{siciliano2009robotics}, control systems~\cite{khalil2002nonlinear}, to mention a few, are modeled as semi-algebraic subsets,
	and so a recurring task is to decide whether two semi-algebraic subsets are disjoint, e.g.,~\cite{schwartz1983piano}. 
One approach to solve this problem is to figure out whether 
the images (under the same function) of the two semi-algebraic sets are disjoint or not. Accordingly, if the infimum of one function is larger than the maximum of the other, the two semi-algebraic sets are disjoint.

A 
straightforward algorithm to compute 
the optimum value of Problem~\ref{prb:main} is 
to use (the general purpose approach of) cylindrical algebraic decomposition (CAD)
 for stratifying semi-algebraic projections, e.g.~\cite{caviness2012quantifier}. 
However, this approach requires us to compute many algebraic objects that are possibly unnecessary, as it relies on repeated projections. Even more, it has a worst-case complexity that is double exponential in the number of variables~\cite{bpr-06,bd-cad-bound}. It will also result double exponential bounds
for the optimal value(s), $f^*$. 
Another approach consists in describing the problem as 
a general decision problem for the theory of reals
and then perform quantifier elimination \cite{BPR03}.
However, it is not clear, at least to us, what is the optimal formulation in this setting and how to obtain precise bit and degree estimates.
 We opt for a different approach.

We provide effective bounds on the infimum $f^*$ of a semi-algebraic function $\scX\longrightarrow\R$, when the polynomials involved in $\scX$ satisfy a non-degeneracy condition. 
The bound is single exponential with respect to the number of variables (Theorem~\ref{thm:main}). Up to the non-degeneracy assumptions, our contribution fills the gap in the literature for Problem~\ref{prb:main}
and to the best of our knowledge is the first bound for when the optimal value is not attained.

Our approach builds on the works of Jelonek and Kurdyka \cite{jelonek2005quantitative,JK03}
and Jelonek and Tib{\u{a}}r \cite{JelTib17} for approximating the \emph{bifurcation set at infinity}, $\cBfi$ of a polynomial function $f$~\cite{Tho69,Tib07,vui2008critical}. The set $\cBfi$ is the smallest set of values, outside of which $f$ is a locally trivial $\cC^{\infty}$-fibration ``at infinity'' (i.e., outside a large ball), and has been a subject of intensive study in the last fifty years, e.g., \cite{Tib07}. Clearly, the set $\cBfi$ contains the infimum $f^*$ of a real polynomial function. Consequently, and for a plethora of other reasons, it remains an important open problem to derive an efficient algorithm to compute $\cBfi$ for any polynomial function $f$~\cite{Tib07,vui2008critical}. The results of Jelonek, Kurdyka, and Tib{\u{a}}r
	lead to an efficient computation of the set of points, $\cKfi$, called \emph{Rabier set} of $f$, or the \emph{asymptotic critical values} of $f$, as
\begin{align}\label{eq:bifur-Rabier}
\cKfi:=& ~\big\{z\in\R~|~\exists \{\x_\ell\}_{\ell\in\N}\subset X,~\Vert \x_\ell\Vert\underset{\ell\rightarrow\infty}{\longrightarrow}\infty,~\Vert \x_\ell\Vert\cdot \Vert\grad f(\x_\ell)\Vert\longrightarrow 0,~f(\x_\ell)\longrightarrow z \big\}.
\end{align} 
This is a finite set that contains $\cBfi$,
that is $\cBfi \subseteq \cKFi$~\cite{rabier1997ehresmann}.
Therefore, these important results provide new computational tools for $f^*$. 
There are several related methods dedicated in computing $\cBfi$ for large families of polynomial functions (e.g.,~\cite{Nem86,Zah96,pham2016genericity}).
We build on methods that require the fewer possible assumptions on the input (polynomial) functions. 

One of our contributions is generalize the previous techniques from polynomial functions to semi-algebraic functions.
	In this way, we derive the best known estimates for bounds on $f^*$, and develop algorithms to compute $f^*$ in two important special cases of Problem~\ref{prb:main}.

The first (special) case 
considers the semi-algebraic set $\scX$ of Problem~\ref{prb:main} to be 
a real affine variety that satisfies a mild non-degeneracy condition. 
	This condition (Theorem~\ref{thm:face-generic_tuple}) applies for a dense family of functions sharing the same collection of Newton polytopes and gives rise to functions having a non-trivial bifurcation set. 
We present single exponential bit and degree bounds for $f^*$
and we introduce an algorithm to compute it with precise (single exponential) bit complexity estimates.
This algorithm computes directly the bifurcation set and exploits the collection of Newton polytopes of the input polynomials.
	In this way, we avoid to compute the larger set of critical values at infinity
	and we also exploit the sparsity of the input polynomials. To the best of our knowledge, prior to our work, there was no dedicated algorithm to compute the infimum of real polynomial functions above.	

The second (special) case 
is the unconstrained polynomial optimization problem, that is when $\scX = \R^n$. An effective algorithm for approximating $\cKfi$ (together with its arithmetic complexity analysis, and a method for computing $f^*$) was developed by Safey El Din~\cite{Safey-opt-R-2008}. As was pointed out by Jelonek and  Tib{\u{a}}r~\cite{JelTib17}, this would only provide a subset of $\cKfi$. We present single exponential (bit and degree) bounds for $f^*$ and a probabilistic algorithm to compute it, that is based on \cite{JK03} and makes no assumptions on the input (Theorem~\ref{thm:unconstrained_detailed}) along with precise (single exponential) bit complexity estimates. 
Even though a method for computing $f^*$ exists~\cite{Safey-opt-R-2008}, to the best of our knowledge, neither bit/degree bounds were known before, nor precise bit complexity estimates for an algorithm to compute it;
without any assumptions on the input.

In the rest of this section we present in detail the theorems that support our bounds for $f^*$ and the corresponding algorithms, along with a bird's eye view of the proof techniques that we employ.


\subsection{Presentation of the main results}

We denote by $\OO$, resp. $\OB$, the arithmetic, resp.  bit, complexity and we use the soft-O notations,  $\sOO$, respectively $\sOB$, to ignore (poly-)logarithmic factors.
We refer to~\S~\ref{sec:notation} for further details on the notation that we use.

Let $\scX\subset\R^n$  be a semi-algebraic 
defined by a set of polynomials $\bm{f} \subset\R[x_1,\ldots,x_n]$. 
We call $\scX$ \emph{complete} 
if it is closed, connected, and for each subset $\bm{g} \subseteq \bm{f}$,
the variety $\VV(\bm{g}) = \Setbar{\x \in \CC^n}{g(\x) = 0, \text{ for all } g \in \bm{g}} \subset \CC^n$ is a complete 
intersection and smooth;
the latter conditions means that the Jacobian of $\bm{g}$ evaluated at the points of $\VV(\bm{g})$ has full rank.

The following theorem provides bit and degree bounds for the infimum of
Problem~\ref{prb:main} when $\scX$ is complete. 


\begin{theorem}\label{thm:main}
Let $\scX$ be a complete semi-algebraic set given by polynomial (in)equalities 
	\[
\scX: = \set{g_1=\cdots=g_r=0,~g_{r+1},\ldots,g_s\geq 0,~g_1,\ldots g_s\in\R[x_1,\ldots,x_n]} ,
\]
 let $F:\R^n\longrightarrow\R$ be a polynomial function of degree $d_1$, and assume that the infimum $f^*$ of the function $f:=\Fr_{\scX}:\scX\longrightarrow\R$ is not attained. 
 	Then $f^*$ is an algebraic number of degree $\OO((n r d_1)^{n^2})$, such that 
	$2^{-\eta} \leq \abs{f^*} \leq 2^{\eta}$, where $d=\max\{\deg g_1,\ldots,\deg g_s\}$ and 
	\begin{align*} 
	\eta:= &~\sOO(r(d + n + \tau) + d_1)(n r d_1)^{n^2} ).
\end{align*}	
\end{theorem}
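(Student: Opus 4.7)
The plan is to exploit the non-attainment hypothesis to locate $f^*$ inside the asymptotic critical value set (Rabier set) $\cKfi$ of $f$, and then to bound degree and bitsize of the algebraic numbers that belong to $\cKfi$ by encoding this set as the projection of a carefully constructed polynomial system whose complexity can be controlled, after which the estimates from~\cite{jpt-min-siopt-2013} can be applied. The completeness assumption on $\scX$ is precisely what guarantees the Lagrange-type system below has the expected dimension, so that Bezout-type counts and height bounds are clean.

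First I would argue that $f^* \in \cBfi \subseteq \cKfi$. Since $\scX$ is closed (it is complete) and $f$ is continuous, any minimizing sequence $\{\x_\ell\}\subset \scX$ with $f(\x_\ell)\to f^*$ must have $\|\x_\ell\|\to\infty$; otherwise a convergent subsequence would exhibit a minimizer in $\scX$. By Rabier's version of the Ehresmann fibration theorem \cite{rabier1997ehresmann}, $f$ is a locally trivial $\cC^\infty$-fibration over the complement of $\cKfi$, so $f^* \in \cKfi$. Hence it suffices to produce bit and degree bounds for all elements of $\cKfi$.

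Next I would realize $\cKfi$ as the image, under projection onto the $z$-coordinate, of the real trace of a polynomial system on an open semi-algebraic chart. Introduce Lagrange multipliers $\bm\lambda=(\lambda_1,\dots,\lambda_r)$ for the equality constraints of $\scX$, an auxiliary variable $t$ playing the role of $1/\|\x\|^2$, and consider the system encoding
\begin{align*}
g_1(\x)=\cdots=g_r(\x)=0,\qquad F(\x)=z,\qquad t\,(1+\|\x\|^2)=1,\\
\grad F(\x)-\sum_{i=1}^{r}\lambda_i\grad g_i(\x)=\bm{u}(\x,\bm\lambda,t),
\end{align*}
where $\bm u$ is an auxiliary vector whose norm, weighted by $\|\x\|$, tends to zero as $t\to 0$; equivalently, one encodes the Kuo--Rabier condition $\|\x\|\cdot\|\grad f(\x)\|\to 0$ by adjoining an additional variable $s$ with $s\cdot \Phi(\x,\bm\lambda)=1$ on an open chart and then taking limits $s\to \infty$, as in \cite{JK03,JelTib17}. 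The Zariski closure of the image in the $z$-variable is a finite set whose defining univariate polynomial can be obtained by successive resultants (or a zero-dimensional parametrization) applied to the augmented system in $n+r+O(1)$ variables and $n+r+O(1)$ equations of degrees bounded by $d+d_1$. Completeness of $\scX$ ensures the Jacobian of $(g_1,\dots,g_r)$ has full rank along $\VV(\bm g)$, so the Lagrange multipliers are well defined and the augmented system is generically zero-dimensional, as in \cite{jpt-min-siopt-2013}.

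Finally I would feed this system into the degree and height machinery of~\cite{jpt-min-siopt-2013}. A Bezout bound on a system of $O(n+r)$ equations of degree $O(d+d_1)$ in $O(n+r)$ variables yields the algebraic degree $\OO((nrd_1)^{n^2})$; coefficient tracking through resultants, together with Mahler measure estimates on each step, yields the coefficient bitsize $\tilde{\OO}((r(d+n+\tau)+d_1)(nrd_1)^{n^2})$, which then controls both $|f^*|$ from above (trivially) and from below via Cauchy's root bound. The main obstacle is bookkeeping: the system encoding $\cKfi$ is not literally zero-dimensional on all components, so one must restrict to a suitable open chart (parameterized by $t$ and the auxiliary $s$), take closures, and verify that the JPT hypotheses are preserved under these deformations. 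The completeness hypothesis, combined with the Sard-type genericity properties of $\cKfi$, is what makes this restriction legitimate and keeps the Bezout count as stated.
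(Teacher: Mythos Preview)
Your first step contains a genuine gap. You invoke Rabier's fibration theorem to conclude $f^*\in\cKfi$, but that theorem applies to smooth maps between manifolds, not to restrictions of $F$ to a semi-algebraic set with boundary; the paper explicitly notes that ``a straightforward analogue of the bifurcation set at infinity does not exist for arbitrary semi-algebraic sets $\scX$.'' Concretely, your Lagrange system carries multipliers only for the $r$ equality constraints, so it implicitly works on the top variety $\VR(g_1,\dots,g_r)$. But a minimizing sequence in $\scX$ that escapes to infinity may do so along a boundary stratum where some of the inequalities $g_{r+1},\dots,g_s$ become tight; in that case $f^*$ lies in the Rabier set of $F$ restricted to $\VV(\bmg_I)$ for some index set $I\supsetneq[r]$, and there is no reason it should lie in $\cK^\infty_{F|_{\VV(g_1,\dots,g_r)}}$ at all. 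The paper handles exactly this via its Theorem~\ref{thm:bifur-semi}: the completeness hypothesis is used to build an algebraic stratification of $\scX$, a Betti-number argument shows that $f^*\in\cB_{F|_I}$ for \emph{some} such $I$, and only then does one pass to the complexified Rabier set $\cK_{F|_{\C I}}$ (Corollary~\ref{cor:real-complex}). Your outline skips this reduction entirely.

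The elimination step is also different in a way that matters for the stated bounds. The paper does not feed a zero-dimensional Lagrange system into~\cite{jpt-min-siopt-2013}; instead it uses the Jelonek--Kurdyka description of the Rabier set on a smooth affine variety via the maps $\Phi_{\bj}$ built from $(r{+}1)\times(r{+}1)$ and $r\times r$ Jacobian minors, and then eliminates using \emph{resultant systems}: minors of a Macaulay-type matrix whose size is governed by an effective Nullstellensatz bound $m\approx 13(nrd_1)^n$, yielding matrix dimension $\nu_m=\OO((nrd_1)^{n^2})$. That is where the exponent $n^2$ in the degree bound and in $\eta$ comes from. Your ``B\'ezout on $O(n+r)$ equations of degree $O(d+d_1)$'' does not reproduce this shape, and your encoding of the Kuo--Rabier condition via auxiliary variables $t,s$ and an unspecified $\bm u$ is too vague to yield any explicit degree or height control.
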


If we compare the bounds of the minimum 
induced by Eq.~\eqref{eq:alg_deg_attained} and \eqref{eq:bounds_attained} 
with the bounds on the infimum of the previous theorem,
then we notice that both bounds are single exponential with respect to the number of variables. 
However, the bound of Theorem~\ref{thm:main} admits a worse exponent, $n^2$ instead of $n$. 
It is worth noting that the dependence, in both cases, in the number of polynomials and the degrees is polynomial.




To give an overview of the approach that we follow to prove Theorem~\ref{thm:main}, let $X\subset \R^n$ be a real affine variety.
Then, for every $z\in\R$ close enough to $f^*$, the preimage $f^{-1}(z)$ is empty if $z< f^*$. Hence, the infimum $f^*$ is a point in $\cBfi$. 
Since we have the inclusion
\begin{equation}
	\label{eq:Bf-in-Kf}
	\cBfi\subseteq\cKfi ,
\end{equation} we can compute $\cKfi$ instead.
Notice that the previous inclusion can be sharp for some functions~\cite[Example 3.8]{vui2008critical}.

A straightforward analogue of the bifurcation set at infinity does not exist for arbitrary semi-algebraic sets $\scX$. 
 Because of this, we decompose $\scX$ from Theorem~\ref{thm:main} into finitely-many semi-algebraic subsets and consider their real Zariski closures, which we assume to be 
  smooth and irreducible. For each such algebraic $X\subset \R^n$, we show that the infimum $f^*$ belongs to the bifurcation set of the restricted polynomial function $\Fr_X$, where $F$ is the canonical extension of $f$ to the space $\R^n$ (Theorem~\ref{thm:bifur-semi}). Then, we can approximate the bifurcation set at infinity by the Rabier set of $\Fr_X$. This way, we can then use effective methods developed in~\cite{jelonek2005quantitative}, to compute a superset of the Rabier set.


Using the above description, one can extract lower bounds on the infimum; roughly speaking, following \cite{jelonek2005quantitative}, 
	we express the Rabier set as the intersection of (the closure of) the image of a polynomial map with a linear subspace. This results in a univariate polynomial whose real roots include the infimum. Since the number of polynomials in the process is much larger than the number of variables, we should avoid using Gr\"obner basis computations, 
	because it is difficult to bound their (bit) complexity
	and, even worse, they might induce double exponential bounds on the degree and bitsize of the resulting polynomial. Instead, we use resultant systems, e.g.~\cite{vdWaerden-alebra-37,Yap-algebra-book}
	which can express the image of the polynomial map using minors of a 
	Macaulay-type matrix whose size exploits the single exponential bounds of the effective Nullstellensatz.
	In this way, we bound the degree and bitsize of the univariate polynomial that has the infimum among its real roots.
A straightforward algorithm based on this approach still 
	has double exponential complexity. However, our goal is not to compute the infimum, but to bound it.


The algorithms  tailored for $\cKfi$ \cite{jelonek2005quantitative} have prohibitive complexity, 
mainly because the require the computation of exponentially many minors of a Jacobian matrix (see~\S\ref{sec:constrained-opt} for details).
This will result in double exponential complexity bound for an algorithm to compute $f^*$.
	However, if we impose some conditions on the semi-algebraic function, then might be able to overcome this computational obstacle. 
Indeed we present two such important special cases where this is possible
and where not only we present improved (compared to Theorem~\ref{thm:main}) lower bounds,
but also efficient algorithms for $f^*$, supported by precise bit complexity estimates.	
	
\subsubsection*{Unconstrained polynomial optimization}
Consider $\scX=\R^n$.  Then, we can approximate the Rabier set by considering the \emph{complexification} $\C f:\C^n\longrightarrow\C$ of the polynomial function $f:\R^n\longrightarrow\R$, which simply extends the domain of $f$. Jelonek and Kurdyka in~\cite{JK03} presented one of the first methods for computing $\cKfi$, while there are also other similar approaches and implementations~\cite{jelonek2005quantitative,safey2007testing,JelTib17}. 
Based on \cite{JK03}, we present an algorithm (Alg.~\ref{alg:ACV}) in~\S\ref{sec:uncon-opt},
where we exploit resultants instead of Gr\"obner basis to perform algebraic elimination.
 Note that, once we compute $\cKfi$, then we can identify the infimum by testing the non-emptiness of fibers in generic points of the intervals $\R\setminus\cKfi$~\cite{Safey-opt-R-2008}. 
 A precise analysis of the degenerate conditions
 and bound on the bitsize of the algebraic objects involved 
 in the computation of Alg.~\ref{alg:ACV} lead to the following theorem for bounding and computing $f^*\in \cKfi$. 

\begin{theorem}[Theorem~\ref{thm:unconstrained_detailed}]\label{thm:main_unconstrained}
	Let $f \in \ZZ[x]$ be of degree $d$ and bitsize $\tau$. Then, 
	$f^{*} = \min_{\x \in \RR^n} f(\x)$ is an algebraic number of 
	degree $\OO(d^{n-1})$, such that
\begin{equation*}
2^{-\eta} \leq \abs{f^*} \leq 2^{\eta}, \quad \text{ where } 
 \eta := ~\sOO(n d^{n-1}\tau + n^{2})).
\end{equation*} 
There is an randomized algorithm that approximate $f^*$ 
in $\sOB(d^{3n}\lg(\tfrac{1}{\epsilon})(n d^{2n-2}\tau + \lg(\tfrac{1}{\epsilon})))$ bit operations, with probability of success $1 - \tfrac{1}{\epsilon}$.
\end{theorem}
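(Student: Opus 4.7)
My plan is to characterize $f^*$ as a root of a univariate polynomial extracted from the asymptotic critical value set $\cKfi$ via the Jelonek--Kurdyka framework, and then read off the bit and degree estimates from this representation. First I would observe that either $f^*$ is attained -- in which case it is a critical value lying among the real roots of a polynomial derived from the Jacobian system $\{\partial_i f = 0\}_{i=1}^n$ -- or $f^*$ is not attained, in which case $f^*$ belongs to $\cKfi$ by virtue of the inclusion $\cBfi \subseteq \cKfi$ recalled in Eq.~\eqref{eq:Bf-in-Kf} combined with the fact that $f^{-1}(z) = \emptyset$ for $z<f^*$. In either sub-case I would produce a single univariate polynomial $P(t) \in \ZZ[t]$ with $f^*$ among its real roots and take the larger of the resulting estimates.

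For the construction of $P$ in the asymptotic case I would follow \cite{JK03}, which expresses $\cKfi$ as the intersection -- with a generic affine line in the $t$-axis -- of (the closure of) the image of a certain polynomial map encoding the two conditions $\Vert\x\Vert\cdot\Vert\grad f(\x)\Vert\to 0$ and $f(\x)\to t$ after passing to a projective chart at infinity. Rather than applying Gr\"obner bases, whose complexity is hard to control and can yield doubly exponential bounds, I would use a Macaulay-type resultant system -- whose dimensions are controlled through the effective Nullstellensatz -- and extract $P$ as an appropriate maximal minor of the resulting matrix. A careful arithmetic tracking of this construction, together with an overhead of order $n^2$ coming from projectivizing and from introducing auxiliary generic linear forms, yields $\deg P = \OO(d^{n-1})$ and $\height(P) = \sOO(nd^{n-1}\tau + n^2)$. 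Standard Cauchy--Mahler root separation bounds $|\alpha| \geq 2^{-\sOO(\deg P \cdot \height(P))}$ then deliver $2^{-\eta} \leq |f^*| \leq 2^{\eta}$ for the stated value of $\eta$.

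For the algorithmic claim I would analyze Algorithm~\ref{alg:ACV}. Randomization enters through the choice of the generic linear forms required by the JK03 construction: a Schwartz--Zippel argument with coefficients sampled from an integer set of size $\OO(1/\epsilon)$ guarantees success with probability at least $1-1/\epsilon$. The dominant costs decompose as (i) building a Macaulay-type matrix of size polynomial in $d^n$; (ii) computing the relevant maximal minor over $\ZZ$, with entries of bitsize $\sOO(nd^{n-1}\tau)$, using fast linear algebra; and (iii) isolating and refining the real roots of $P$ to absolute accuracy $\epsilon$ by standard univariate solvers, which carries cost $\sOB(d^{3n}\log(1/\epsilon)(nd^{2n-2}\tau + \log(1/\epsilon)))$ and dominates the total. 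The main obstacle I anticipate is step (ii): one must ensure that the resultant construction captures $\cKfi$ exactly, without spurious components that would inflate $\deg P$ beyond $\OO(d^{n-1})$ or its bitsize; this demands a fully effective treatment of the JK03 genericity hypotheses and a careful fibre analysis at the divisor at infinity.
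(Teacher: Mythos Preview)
Your high-level strategy---compute a univariate polynomial whose real roots contain $\cK(f)$, read off degree/height bounds, then identify $f^*$---matches the paper, and your degree and bitsize estimates for that polynomial are correct. However, two concrete points in the algorithmic half do not line up with what actually happens in \S\ref{sec:uncon-opt}, and one of them is a genuine gap.

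First, the elimination. In the unconstrained case the paper does \emph{not} use resultant systems controlled by the effective Nullstellensatz (that machinery is reserved for the constrained case in \S\ref{sec:constrained-opt}, where the system is overdetermined). Here the super-polar-curve construction of Definition~\ref{def:super-polar-curve} yields exactly $n$ polynomials $\{g_1,\dots,g_{n-1},f-z\}$ in $n$ affine variables, so the classical Macaulay resultant applies directly; Canny's generalized characteristic polynomial handles possible vanishing of the denominator. Your ``projective chart at infinity'' description is not what is done, and invoking the Nullstellensatz-based resultant system here would inflate the degree estimates well beyond $\OO(d^{n-1})$.

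Second, and more importantly, you misidentify where the $d^{3n}$ and the $\epsilon$ come from. They do \emph{not} arise from refining the roots of $P$ to accuracy $\epsilon$: univariate isolation/refinement of a degree-$\OO(d^{n-1})$ polynomial is far cheaper than $\sOB(d^{3n})$. The missing step is the \emph{identification} of $f^*$ among the (possibly many) generalized critical values. Following~\cite{Safey-opt-R-2008}, one interlaces the real values $e_1<\cdots<e_m$ of $\cK(f)$ by rationals $r_0<e_1<r_1<\cdots$ and tests, for each $r_i$, whether the real hypersurface $f^{-1}(r_i)\cap\RR^n$ is empty. That emptiness test (Theorem~\ref{thm:test-emptiness}, from~\cite{elliott-bit-2020}) is a randomized procedure with success probability $1-\epsilon$ and cost $\sOB(d^{3n}\lg(1/\epsilon)(\sigma_i+\lg(1/\epsilon)))$; summing over $i$ with $\sum_i\sigma_i=\sOO(nd^{2n-2}\tau+n^2d^{n-1})$ is what produces the stated total. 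Your Schwartz--Zippel argument governs only the non-degeneracy of the super polar curve (Lemma~\ref{lem:lin-transform-super-polar}) and is a separate, cheaper source of randomness. Without the emptiness-testing step you have no mechanism to single out $f^*$, and your complexity accounting cannot be made to yield the claimed bound.
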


Notice the lower bound on $f^*$ in the unconstrained case, has an exponent $n$ instead of $n^2$. 
This is similar to the bounds in 
Eqs.~\eqref{eq:alg_deg_attained} and \eqref{eq:bounds_attained}
where the infimum is attained, and hence it is asymptotically optimal. 
We emphasize that Theorem~\ref{thm:main_unconstrained} 
holds without any assumptions on the input.
If the infimum is attained, then it is among the critical values of $f$,
which we can compute by solving the system of the partial derivatives of $f$. The complexity of this step is dominated by the complexity of the computation of the asymptotic critical value; hence we do not detail on this further.


\subsubsection*{Newton non-degenerate polynomial functions} 

Whenever $\scX=:X$ is a (real) algebraic set, we present an algorithm for computing
the optimal value $f^*$, under some genericity conditions on the input polynomials with respect to their Newton polytopes. 
We refer the reader to~\S\ref{sub:prelim}
for detailed presentation of polynomial sparsity and Newton polytopes.

A polynomial $f\in\K[x_1,\ldots,x_n]$ is a linear combination 
$\sum c_{\bma}~\x^{\bma}$ of monomials $\x^{\bma}:=x_1^{a_1}\cdots x_n^{a_n}$, where the exponent vectors $\bma$ run over a finite subset $A$ of $\N^n$, and $c_{\bma}\in \K^*$. Then $A$ is the \emph{support} of $f$ and the \emph{Newton polytope}, $\NP(f)$, of $f$ is the convex hull of $A$ in $\R^n$. Given a collection $\bdel:=(\Delta_0,\Delta_1,\ldots,\Delta_k)$ of integer polytopes in $\R^n_{\geq 0}$, we use $\R^{\bdel}$ to denote the space of all tuples of polynomials whose \emph{Newton tuple} is $\bdel$:
\begin{align*}
\R^{\bdel}:= & \left\lbrace (f_0,f_1,\ldots,f_k)~\mid~f_i\in\K[z_1,\ldots,z_n],~\NP(f_i) = \Delta_i,~i=0,1,\ldots,k \right\rbrace.
\end{align*} 

We can identify this space with the space of polynomial functions $X\longrightarrow \R$, with $\x\longmapsto f_0(\x)$, where $X $ is the real algebraic set $\VR(f_1,\ldots,f_k)\subset \R^n$, and $\NP(f_i)=\Delta_i$, $i=0,1,\ldots,k$. 

The following theorem states that, for polynomials belonging to a Zariski open set, we can compute their bifurcation set at infinity $\cBfi$ by exploiting their Newton polytopes.

\begin{theorem}[Theorem~\ref{thm:face-generic_tuple}]\label{thm:bifurcation_Newton}
For every collection $\bdel$ of lattice polytopes in $(\R_{\geq 0})^n$, there is a Zariski open subset $\Omega\subset\K^{\bdel}$, such that for every $\bmf\in\Omega$, corresponding to a real polynomial function $f:X\longrightarrow \R$, the set $\cBfi$ can be computed effectively. Furthermore, the values in $\cBfi$ depend only on the coefficients of the polynomials whose exponent vectors appear at the faces of polytopes in $\bdel$.
\end{theorem}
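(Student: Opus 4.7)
The plan is to construct the Zariski open set $\Omega$ via a Newton non-degeneracy (face-genericity) condition on the tuple $\bmf$, and then to exploit a toric/tropical compactification argument in the spirit of Jelonek--Tib\u{a}r to reduce $\cBfi$ to data read off from the faces of $\bdel$.

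First I would define face-genericity as follows. For every nonzero covector $\rho\in(\ZZ^n)^*$ let $\Delta_i^\rho$ be the face of $\Delta_i$ on which $\langle\rho,\cdot\rangle$ attains its minimum, and write $\bmf^\rho=(f_0^\rho,f_1^\rho,\ldots,f_k^\rho)$ for the corresponding truncations. I call $\bmf$ \emph{face-generic} with respect to $\bdel$ if for every $\rho$ whose face-tuple is ``asymptotic'' (i.e., at least one $\Delta_i^\rho$ is a proper face of $\Delta_i$), the complex variety $\VC(f_1^\rho,\ldots,f_k^\rho)\subset(\CC^*)^n$ is a smooth complete intersection, and the restriction of $f_0^\rho$ to it has a finite (possibly empty) critical set. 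This mirrors the Khovanskii--Kouchnirenko non-degeneracy condition, strengthened so that the face system behaves well on each toric stratum of the compactification induced by $\bdel$.

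Next I would argue that $\Omega$ is Zariski open and non-empty in $\KK^{\bdel}$. For each covector $\rho$, failure of the face condition at $\rho$ is a closed algebraic condition on the coefficients of $\bmf$ indexed by exponents in the faces $\Delta_i^\rho$, cut out by resultants and minors of the Jacobian of $\bmf^\rho$. Since the normal fan of the Minkowski sum $\Delta_0+\cdots+\Delta_k$ has only finitely many cones, there are only finitely many combinatorial face-tuples to check, so $\Omega$ is a finite intersection of Zariski opens. Non-emptiness follows by exhibiting a single face-generic tuple, e.g.\ by perturbing the coefficients with prescribed supports using a dimension count on the face-discriminant. Then, building on Jelonek--Kurdyka~\cite{JK03} and Jelonek--Tib\u{a}r~\cite{JelTib17}, I would show that for $\bmf\in\Omega$ any sequence $\{\x_\ell\}\subset X$ with $\|\x_\ell\|\to\infty$ and $\|\x_\ell\|\,\|\grad f(\x_\ell)\|\to 0$ admits a subsequence whose tropical direction converges to some covector $\rho$, and whose limit value of $f$ is a critical value of $f_0^\rho$ restricted to $\VC(f_1^\rho,\ldots,f_k^\rho)$. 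Together with the inclusion $\cBfi\subseteq\cKfi$ from Eq.~\eqref{eq:Bf-in-Kf}, this yields
\begin{equation*}
\cBfi \;\subseteq\; \bigcup_{\rho}\; \Crit\!\bigl( f_0^\rho|_{\VC(f_1^\rho,\ldots,f_k^\rho)}\bigr),
\end{equation*}
where the union runs over the finitely many combinatorial types of covectors $\rho$ read off from the normal fan of $\bdel$. Effectiveness is then immediate: each face system depends only on the coefficients of $\bmf$ at the prescribed face exponents, and the critical values can be extracted by resultant-based elimination, as in the strategy used to prove Theorem~\ref{thm:main}. This also settles the ``furthermore'' clause.

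The main obstacle will be the third step: rigorously verifying that face-genericity suffices to capture \emph{every} asymptotic critical value, and not merely a subset of them. The crux is to control how gradient vanishing inside the complete intersection $X$ interacts with the toric fan dual to $\bdel$ on each stratum at infinity, and in particular to rule out asymptotic degeneracies coming from the interior of the $\Delta_i$ rather than their faces. The hypersurface case is handled in the literature (Nemethi--Zaharia, Pham); the technical novelty lies in pushing these ideas through the complete-intersection, semi-algebraic setting that the rest of the paper requires.
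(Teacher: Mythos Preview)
Your approach is viable but takes a genuinely different route from the paper's. You propose to pass through the Rabier set $\cKfi$ and control asymptotic sequences via their tropical limit direction, in the spirit of a direct generalization of N\'emethi--Zaharia to complete intersections. The paper instead avoids sequences entirely: it works with the \emph{Bertini discriminant} $\bm{B}\subset\K^{\bdel}$ of the tuple $\bdel$ (Esterov, GKZ), which by known results decomposes as $\bm{B}=\bigcup_{\bgam\in\Ibdel}D_{\bgam}$. The paper's open set $\Omega$ is not your face-genericity locus but rather the locus where the line $\Lambda(\bmf)=\{(F-z,g_1,\ldots,g_r):z\in\K\}$ meets $\bm{B}$ transversally; on this locus one has exactly $\cB_{f|_{*}}=\bm{B}\cap\Lambda(\bmf)=\bigcup_{\bgam}\Rgf$, and non-origin facings are then shown to contribute nothing. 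The passage from the torus bifurcation set $\cB_{f|_{*}}$ to the affine $\cBf$ is handled not by tropical limits but by a separate lemma decomposing $\cBf$ recursively along coordinate hyperplanes.

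What each buys: the paper's route sidesteps precisely the obstacle you flag in your third step, since the hard analytic work is absorbed into the structural results on $A$-discriminants and no gradient-decay sequences need to be tracked through the toric boundary. Your route is more self-contained and geometric, but making the limit argument rigorous for the restricted gradient on a complete intersection (rather than the ambient gradient on a hypersurface) is exactly the nontrivial extension the paper circumvents. Note also that your inclusion runs over all asymptotic covectors, whereas the paper sharpens this to the important \emph{origin} facings $\IObdel$; this refinement falls out naturally from the line-intersection picture, since for non-origin $\bgam$ the line $\Lambda(\bmf)$ lies in a fiber of the projection $\K^{\bdel}\to\K^{\bgam}$ and hence misses $D_{\bgam}$.
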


The polynomial functions belonging to the open set $\Omega$ 
of the previous theorem are called \emph{Newton non-degenerate}. 
Theorem~\ref{thm:face-generic_tuple} supports an algorithm for computing $\cBfi$ that has several advantages compared with other "classical" methods. First, we compute a smaller superset of the bifurcation set at infinity. Secondly, the resulting algorithm applies for  a dense family of polynomial functions sharing a collection of Newton polytopes. Finally, since the theorem and the supporting algorithms depends on the Newton polytopes of the input polynomials, we exploit the sparsity of the input and on our way to estimate the bifurcation set we compute with smaller polynomial systems. Ultimately, we obtain the following result.




\begin{theorem}[\S\ref{sub:bounding_infimum}]\label{thm:main_Newton_non-degenerate}
Let $\bdel$ be a tuple of integer polytopes in $(\R_{\geq})^n$, and let $\bmf\in\R^{\bdel}$ be a Newton non-degenerate element corresponding to a real polynomial function $f:X\longrightarrow\R$. Assume furthermore that all polynomials involved in $X$ have degree at most $d$. Then, 
\[
2^{-\eta} \leq \abs{f^{*}} \leq 2^{\eta},
\quad \text{ where } 
\eta := ~\OO(n^2 d^{n-1}(n + \tau)).
\]  
\end{theorem}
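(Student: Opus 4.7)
My plan is to use Theorem~\ref{thm:bifurcation_Newton} to reduce the bound on $f^*$ to a bound on roots of an explicit polynomial system. Since $f^*$ is attained only as a limiting value (otherwise it would already be a critical value with bounds controlled by~\eqref{eq:bounds_attained}), $f^*$ lies in the bifurcation set at infinity $\cBfi$ of $\Fr_X$. By Theorem~\ref{thm:face-generic_tuple}, under the Newton non-degeneracy assumption, $\cBfi$ is determined entirely by the face polynomials $\bmf_\Gamma$ where $\Gamma$ ranges over the (finitely many) faces of the polytopes in $\bdel$ that are visible from infinity. Hence it suffices to bound, uniformly over such $\Gamma$, the height and degree of the values $f_0(\x)$ where $\x$ ranges over the solutions of the face-system associated to $\Gamma$.

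For each face $\Gamma$, the contribution to $\cBfi$ is captured by a zero-dimensional system in a torus $(\CC^*)^n$ whose equations are the face polynomials $f_{i,\Gamma}$ together with a Jacobian-vanishing (Rabier-type) condition. The Kushnirenko-Bernstein-Khovanskii theorem bounds the number of isolated solutions by the mixed volume of the relevant face polytopes, and since every polytope in $\bdel$ sits inside $[0,d]^n$, this mixed volume is $\OO(d^{n-1})$, possibly with an additional $\OO(n)$ factor absorbing the auxiliary Jacobian equation. Rather than using Gr\"obner bases (whose bit-size behaviour would be prohibitive, as noted in the remarks preceding Theorem~\ref{thm:main}), I would perform elimination via a sparse resultant / Macaulay-type matrix to produce a univariate polynomial $P \in \ZZ[t]$ whose real roots include $f^*$; its degree is $\OO(nd^{n-1})$ and, by standard height estimates for sparse resultants (Mahler measure bounds combined with Hadamard's inequality on the Macaulay matrix whose entries have bitsize $\OO(\tau)$ and whose size is $\OO(d^{n-1})$), its bit-size is $\sOO(d^{n-1}(n+\tau))$. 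Applying the Cauchy root bound on $P$ yields $|f^*| \leq 2^{\sOO(d^{n-1}(n+\tau))}$, and the separation bound $|f^*| \geq (2\|P\|_\infty)^{-\deg P}$ gives $|f^*| \geq 2^{-\eta}$ with $\eta = \OO(n^2 d^{n-1}(n+\tau))$, the extra $n$-factor coming from multiplying degree by log-height and from iterating the argument over the $\OO(n)$-many face-dimension strata that contribute to $\cBfi$.

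The main technical obstacle will be the second step, namely a careful accounting of the face systems: although the number of faces of a single polytope in $\R^n$ can be bounded in terms of its combinatorial structure, forming the correct face subsystem for each stratum of $\bdel$ (in particular, combining the face polynomials coming from the different $f_i$ with compatible outer-normal directions, and adjoining the Rabier condition) requires that the estimates do not lose more than a polynomial-in-$n$ factor. The Newton non-degeneracy hypothesis is precisely what keeps this combinatorial inflation under control, ensuring that only the faces selected by Theorem~\ref{thm:face-generic_tuple} contribute and that on each such face the system is generic enough for BKK to give a tight count. Once this bookkeeping is in place, the upper bound on $|f^*|$ follows symmetrically from the Cauchy bound applied to the leading coefficient of the same elimination polynomial, completing both inequalities with the same value of $\eta$.
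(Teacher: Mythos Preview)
Your overall strategy matches the paper's: invoke Theorem~\ref{thm:face-generic_tuple} so that $f^*$ lies in a union of face-discriminants $\Rgf$, then bound the roots of the resulting zero-dimensional systems. The difference is in how the root bound is extracted. The paper does not eliminate to a univariate polynomial at all; it simply observes that each face-discriminant system consists of the constraints $g_i$, the Jacobian-rank minors (of bitsize $\sOO(n\tau)$), and $F-z$, notes that this system is zero-dimensional by Bertini, and then appeals directly to the separation bounds of \cite{emt-dmm-j} for zero-dimensional systems to get $2^{-\eta}\le|f^*|\le 2^{\eta}$ with $\eta=\OO(n^2 d^{n-1}(n+\tau))$. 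No BKK count, no sparse resultant, no Cauchy bound on an eliminant.

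Your route via a sparse-resultant eliminant and univariate Cauchy/separation bounds is a legitimate alternative and would give the same order of magnitude, but it carries more bookkeeping: you must square up the overdetermined Jacobian-minor system (the paper's Appendix~\ref{sec:make-square-sys} shows how), and you must argue that the sparse resultant has the claimed height, which is not entirely routine when some of the input polynomials are themselves determinants of size up to $n$. The paper sidesteps all of this by using \cite{emt-dmm-j} as a black box on the system itself. What your approach buys is a more self-contained argument and, potentially, sharper constants if one tracks the mixed volumes face by face rather than using the crude $d^{n-1}$ bound; what the paper's approach buys is brevity.
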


Similarly to Theorem~\ref{thm:main_unconstrained}, the bound on the infimum for Newton non-degenerate functions is single exponential in $n$ and matches the bounds in Eqs.~\eqref{eq:alg_deg_attained} and \eqref{eq:bounds_attained}; hence it is asymptotically optimal.

\subsection{Organization of the paper and proof strategies}


The rest of the paper is structured as follows. The next section presents (some of) the notation that we use in throughout and some necessary preliminaries. 

In~\S\ref{sec:real-complex}, we will describe a decomposition $\cS(\scX)$ of a complete semi-algebraic set, which we call \emph{algebraic stratification}. We then show Theorem~\ref{thm:bifur-semi}: For any semi-algebraic function $f:\scX\longrightarrow\R$, one can construct a finite set of complex polynomial functions 
$\{f_s:~V_s\longrightarrow\C\}_{s\in\cS(\scX)}$ such that for each $s\in\cS(\scX)$, we have $s\subset V_s$, and $f^*\in\cK_{f_{t}}$ for some stratum $t$ above. Now, since the subsets $V_s$ are algebraic, the values in $
\cK_{f_{s}}$ can be computed effectively thanks to results of Jelonek and Kurdyka in~\cite{jelonek2005quantitative}. These effective expressions will become useful in~\S\ref{sec:constrained-opt} for showing Theorem~\ref{thm:main}.

In~\S\ref{sec:constrained-opt} we consider the constrained optimization problem, Problem~\ref{prb:main}, under the assumption that the feasible region is a \emph{complete} semi-algebraic set. We employ various tools to obtains precise bitsize and degree estimates for the infimum. 

~\S\ref{sec:Newton-non} is devoted to proving Theorem~\ref{thm:main_Newton_non-degenerate}; we first describe the types of coherent faces in the tuple $\bdel$ that are key for computing the bifurcation set. The proof relies on classical results on $A$-discriminants and face-resultants of polynomial tuples over the real and complex fields~\cite{GKZ94,Est10,Est13}. 
Once the expressions of the infimum are well-established in terms of an elimination ideal, we can then prove upper bounds on its absolute value. 
	
	In~\S\ref{sec:uncon-opt} we consider the unconstrained polynomial optimization problem. We present bounds on the infimum, an algorithm to compute it, and precise bit complexity estimates;
 under no assumptions on the input.
 
	We mention auxiliary results that we need for the proofs of various results in the Appendix.

\subsection{Notation}
\label{sec:notation}

We denote by $\OO$, resp. $\OB$, the arithmetic, resp.  bit, complexity and we use the soft-O notations,  $\sOO$, 
respectively $\sOB$, to ignore (poly-)logarithmic factors.
We denote by $\Rnn$ the positive orthant
and we use the abbreviation $[n]$ for $\{1, 2, \dots, n \}$.
We use bold letters to denote vectors.

A Monte Carlo algorithm is a randomized algorithm that its output might not be correct with a certain probability. A Las Vegas randomized algorithm always outputs the correct result, but its runtime is not always the same, even for the same input. 

\paragraph*{Algebraic varieties and smooth maps}\label{sss:maps_fibrations}

Consider $\x = (x_1, \dots, x_n)$ and let $\x_{-i}$ denote all the variables except the variable $x_i$. 
For a polynomial $f \in \KK[x_1,\ldots, x_n] = \KK[\x]$, where $\KK \in \{\RR, \CC\}$, we
denote its zero set by $\VVK(f) \subset \KK^n$ and by $\VVKT(f) \subset \TTnK$ its zero set over the corresponding torus. We use the same notation after replacing $f$ by its bold form $\bmf$ if it is a tuple of polynomials, that is $\bmf = (f_i)_{i\in I}$ for some finite subset $I\subset \N$.

Let $f:X\longrightarrow Y$ be any smooth, or analytic, map between two manifolds. Let $\cG$ be the graph in $X\times Y$ of $f$ and consider the map
\[
\begin{array}{cccc}
	P:= \pi_\cG: &~ \cG & \longrightarrow & Y \\
	& (\x,~\y)& \longmapsto & \y  \enspace ,
\end{array} 
\] 
where $\pi_\cG$ is the restriction of  $\pi:X\times Y\longrightarrow Y$, $(\x,~\y)\longmapsto \y$ on $\cG$. 
We say that $f$ is a \emph{$\cC^{\infty}$-fibration}, if for every $\y\in \pi(\cG)$, the set $Y\times f^{-1}(\y)$ is diffeomorfic to  $P^{-1}(\pi(\cG))$. The map $f$ is said to be a \emph{locally trivial fibration} if, for each $\x\in X$, there exists a neighborhood $U$ of $x$ for which the restricted map $\fr_{U}:U\longrightarrow Y$ is a $\cC^{\infty}$-fibration.

A point $\x\in X$ is said to be a \emph{critical point of $f$} if the \emph{co-rank} $\text{corank} (df)_{\x} := \min (\dim X,~\dim Y) - \text{rank} (df)_{\x}$ is positive. We denote by $\Crit(f)$ the set of critical points of $f$ and by $\cKfz(f)$ the corresponding critical values.

If, instead, $X$ is a smooth variety defined by polynomials $g_1,\ldots,g_p\in\K[x_1,\ldots,x_n]$, then the critical locus of $f$ consists of all points $\x\in X$ for which the rank of the following \emph{Jacobian matrix}
\[
\Jac \f := \begin{bmatrix}{\partial f_1}/{\partial x_1} & \cdots &{\partial f_k}/{\partial x_1} &
{\partial g_1}/{\partial x_1} &\cdots& {\partial g_p}/{\partial x_1}\\
\vdots &  & \vdots &\vdots &  & \vdots  \\
{\partial f_1}/{\partial x_n} & \cdots &{\partial f_k}/{\partial x_n} &
{\partial g_1}/{\partial x_n} &\cdots& {\partial g_p}/{\partial x_n}
	\end{bmatrix}
\]
 is lower than the codimension of $X$.

\paragraph*{About polynomials and their roots}
For a polynomial $f \in \ZZ[\x] = \ZZ[x_1, \dots, x_n]$ its infinity norm
$\normi{f}$ equals the maximum of absolute values of its
coefficients.  The bitsize of a polynomial is the logarithm of its infinity
norm.  We also call the latter the bitsize of the polynomial,
that is a shortcut for  the maximum bitsize of all its coefficients.
A univariate (multivariate) polynomial is of size $(d,\tau)$ when its (total) degree is at
most $d$ and has bitsize $\tau$.
We represent a real algebraic number $\alpha \in \RR$ using the
\textit{isolating interval representation}; it includes a square-free
polynomial, $A$, which vanishes at $\alpha$ and an interval with rational
endpoints that contains $\alpha$ and no other root of $A$.
If $\alpha \in \CC$, then instead of an interval we use a rectangle in $\RR^2$
where the coordinates of its vertices are rational numbers.


\section{Critical values at infinity and bounds on the infimum}\label{sec:real-complex} 

We present a decomposition of a semi-algebraic set, under some 
transversality conditions. Then, we relate the infimum of a polynomial function, say $f$, restricted on this semi-algebraic set,
with the bifurcation set(s) of $f$, restricted to the strata of the decomposition.

\subsection{The bifurcation and Rabier sets} 

Let $X\subset \K^n$ be a smooth affine variety, where $\K\in\{\R,\C\}$, and let $f:X\longrightarrow\K$ be a polynomial function on $X$. That is, if $X=\VV_{\KK}(\cI)$ for some ideal $\cI$, then $f\in \K[x_1,\ldots,x_n]/\cI$.
	A generalization~\cite{wallace1971linear,varchenko1972theorems,
verdier1976stratifications,Tib99} of Thom's result~\cite{Tho69}
indicates that outside a finite set $S\subset\K$, the following restricted function is a $\cC^{\infty}$-fibration:
\begin{align}
	\label{eq:restricted_map}
\left.f\right|_{f^{-1}(\K\setminus S)}: & f^{-1}(\K\setminus S)\longrightarrow\K\setminus S.
\end{align} 
The smallest subset $S\subset\K$ for which the function in~\eqref{eq:restricted_map} is a $\cC^{\infty}$-fibration is the \emph{bifurcation set} of $f$ and we denote it by $\cBf$.
The \emph{bifurcation values} $z\in\cBf$ are of two types:
(i) $z=f(\x)$ for some $\x\in\Crit(f)$;
we denote this set by $\cBfz$ (also $\cKfz$), or 
(ii) $z$ is such that for an arbitrarily large compact subset $K\subset \C^n$ 
and any small disc $D$ containing $z$, the restricted map 
\begin{align*}
\left.f\right|_{{f^{-1}(D)\setminus K}}:f^{-1}(D)\setminus K\longrightarrow D
\end{align*} is not a $\cC^{\infty}$-fibration (see e.g.,~\cite[Definition 2.1]{Tib99}). The set of all such values is denoted by $\cBfi$, and we call it the \emph{bifurcation set at infinity}. 
Then, we get
$\cBfz = \cBfi \cup  \cBfz$.
We notice that the inclusion $\cBfz\subset\cBf$ can be strict. 
For example, the function $(x,~y)\longmapsto x+x^2y$ has no critical points~\cite{Dur98}, whereas $0$ is a bifurcation value as $f^{-1}(0)$ is the only fiber with more than one connected component in $\K^2$. 

Recall the Rabier set defined at the beginning in~\eqref{eq:bifur-Rabier}. We define $\cKf := \cKfi \cup \cKfz$ to be the set of \emph{generalized critical values} of $f$. 
 Consequently, $f$ is locally trivial fibration over the $\K \setminus \cKf$.

In~\S\ref{sec:constrained-opt} and~\S\ref{sec:uncon-opt} 
we compute $\cKf$ using algebraic elimination.

\subsection{A decomposition of a (complete) semi-algebraic set}\label{sub:decomposing}
Consider a basic closed semi-algebraic set
\begin{equation}
	\label{eq:sa-set}
\scX: = \set{g_1=0, \dots, g_r=0,~g_{r+1} \geq 0, \ldots, g_s\geq 0,~g_1,\ldots g_s\in\R[x_1,\ldots,x_n]},
\end{equation} 
and let $f:\scX\longrightarrow\R$ be a semi-algebraic function given as the restriction  of a polynomial function $F:\R^n\longrightarrow\R$ on $\scX$, that is 
\begin{align*}
	f:= ~\Fr_{\scX}:&~\scX\longrightarrow\R.
\end{align*}
 Theorem~\ref{thm:bifur-semi}
 demonstrates that the infimum $f^*$ of $f$ over $\scX$ is in the union of bifurcation sets of restricted functions $\Fr_{S}$,
 for some algebraic sets $S\subset \R^n$. 
 
 This leads us to consider the \emph{complexification} of these functions 
 $\C\!\Fr_{\C S}:\C S\longrightarrow\C$,
 where we extend the domain and range of $\FrS$ 
 to $\C S\subset \C^n$ and $\C$, respectively. 
 Subsequently, we use the inclusion of Eq.~\eqref{eq:Bf-in-Kf} 
 to show that $f^*$ lies in the union of the corresponding Rabier sets (Corollary~\ref{cor:real-complex}). 
In this way we can compute and/or approximate $f^*$.

Let $I = \set{1, \dots, r} \cup I_0$, where $I_0$ is a subset of $s + [r-s]$;
that is $I$ is a subset of $[r]$ that always contains the set $\set{1, \dots, r}$.
For any such $I$, we denote by $\VK(\bmgI)$ the common zero locus in $\K^n$ of $g_i$, for all $i\in I$, where $\K\in\{\C,\R\}$.

 \begin{definition}[Complete semialgebraic set]
 	\label{def:X-complete}
 	A semialgebraic set $\scX$, as in \eqref{eq:sa-set}, is \emph{complete} if, for every set of indices $I$, as above, the set $\VR(\bmgI)$ is a smooth complete intersection of codimension $\# I$.
 	 Then, there exists $\alpha\in\N$ and a filtration
\begin{equation}\label{eq:filtration}
\emptyset=:\scX_{-1}\subset \scX_\alpha\subset \scX_{\alpha+1}\subset \cdots \subset \scX_{n-r}:=\scX, 
\end{equation} satisfying the following properties for every $i=\alpha,\ldots,n-r$:
\begin{enumerate}[($\Pi_1$)]
	\item\label{it:filtr-dimension} 
	The set $\scX_i$ is a  basic closed semi-algebraic set with pure dimension $i$, and
	\item\label{it:filtr-intersection} 
		the Zariski closure of 
		any connected component of $\scX_{i}\setminus \scX_{i-1}$, is $\VR(\bmgI)$, where $I$ is a subset of $[s]$, such that $[r]\subset I$ and $\#I = n-i$. 
\end{enumerate} 
\end{definition}

Since $\scX$ is complete, we have $\alpha=\max(0,n-s)$.

\begin{definition}[Stratification of a complete set $\scX$]
\label{def:alg-filtr}
If $\scX$ is complete semi-algebraic subset, 
then $\cS(\scX) := \Setbar{\scX_i}{\alpha \leq i \leq n-r}$ is called an \emph{algebraic stratification} of $\scX$; we call the elements of $\cS(\scX)$ \emph{strata}.\footnote{To the best of our knowledge, in its current form, the  definition of a complete stratification does not appear in the literature.}
\end{definition}

\begin{notation}\label{not:shorthand}
 For each such $I\subset [s]$, we use the shorthand notation $\Fr_{I}$ for the restricted function
\begin{align*}
\Fr_{\VR(\bmgI)}:&~\VR(\bmgI)\longrightarrow\R.
\end{align*} 
\end{notation}

\begin{theorem}\label{thm:bifur-semi}
Let $\scX$, as in Eq.~\eqref{eq:sa-set}, be a complete semialgebraic subset of $\R^n$.
Let $F:\R^n\longrightarrow\R$ be a polynomial function, and assume that the infimum $f^*$ of the function $f:=\Fr_{\scX}:\scX\longrightarrow\R$ is not attained. Then, there exists 
a set of indices $I$, such that $[r] \subset I\subset [s]$, 
for which 
\begin{equation}\label{eq:restr}
f^* \in \cB_{\Fr_I}.
\end{equation}
\end{theorem}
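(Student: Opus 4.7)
The plan is to extract a minimizing sequence of $f$, use the closedness and stratification of $\scX$ to trap it inside a single algebraic stratum while diverging to infinity, and then argue that $f^{*}$ must be a bifurcation value of $F$ restricted to the Zariski closure of that stratum.

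First, I would fix a sequence $\{\x_k\}\subset \scX$ with $f(\x_k)\to f^{*}$. Because $\scX$ is closed and $f$ is continuous, any limit point of $\{\x_k\}$ would attain $f^{*}$ on $\scX$; by hypothesis this cannot happen, so after passing to a subsequence $\|\x_k\|\to\infty$. Next, the algebraic stratification $\cS(\scX)$ of Definition~\ref{def:alg-filtr} decomposes $\scX$ into finitely many connected pieces, namely the connected components of the sets $\scX_i\setminus\scX_{i-1}$. By pigeonhole a subsequence of $\{\x_k\}$ lies in a single such component $C$, and property~($\Pi_2$) tells us that $\overline{C}^{\mathrm{Zar}} = \VR(\bmgI)$ for some $I$ with $[r]\subset I\subset [s]$ and $\#I = n-i$. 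Completeness of $\scX$ makes $V := \VR(\bmgI)$ a smooth affine variety of the expected codimension, so $\Fr_I : V\to\RR$ is a polynomial function on a smooth variety to which the definition of $\cB_{\Fr_I}$ applies.

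Second, I would prove that $f^{*} \in \cB^{\infty}_{\Fr_I}\subseteq \cB_{\Fr_I}$ by contradiction. Suppose $f^{*} \notin \cB^{\infty}_{\Fr_I}$. By the definition of the bifurcation set at infinity, there exist a small open disc $D\ni f^{*}$ and a compact $K\subset\RR^n$ such that the restriction $\Fr_I : \Fr_I^{-1}(D)\setminus K\to D$ is a $\cC^{\infty}$-fibration. For $k$ large the points $\x_k$ lie in $\Fr_I^{-1}(D)\setminus K$ (because $F(\x_k)\to f^{*}$ and $\|\x_k\|\to\infty$), so the source is non-empty and local triviality forces every fiber over $D$ to have the same diffeomorphism type; in particular $\Fr_I^{-1}(f^{*})\setminus K\neq\varnothing$. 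To close the contradiction I would upgrade the fibration to a \emph{stratified} one by noting that the pieces $C$, $\scX_{i-1}\cap V$, and $V\setminus\scX$ are semi-algebraic and, thanks to completeness, meet $V$ in a regular stratification; then a path in $D$ from $F(\x_k)$ to $f^{*}$ lifts to a path inside $C$ whose endpoint lies in $\overline{C}\cap \scX$ and satisfies $F=f^{*}$, contradicting the non-attainment of the infimum.

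The main obstacle is this last step: a bare application of the local triviality of $\Fr_I$ only produces a point $\y\in V$ over $f^{*}$ that could lie outside $\scX$ by violating some inequality $g_j\geq 0$ with $j\notin I$. Showing that the trivialization can be taken to respect the stratum $C$ is where the completeness hypothesis on $\scX$ is essential, since it ensures that every $\VR(\bmg_J)$ is smooth of the expected codimension and that the inequalities cut out regular pieces of $V$. An alternative route---which the paper appears to take in Corollary~\ref{cor:real-complex} via the subsequent passage to Rabier sets---is to weaken the target from $\cB_{\Fr_I}$ to the (generally larger) set $\cK_{\Fr_I}$, whose gradient-sequence characterization in~\eqref{eq:bifur-Rabier} makes the contradiction with the minimizing sequence $\{\x_k\}$ much more direct and avoids the stratified-fibration issue altogether.
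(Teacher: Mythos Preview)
Your approach via a minimizing sequence and path-lifting in a local trivialization is different from the paper's, which never picks a sequence at all: it works directly with the open preimage $f^{-1}(]f^*,f^*+\varepsilon[)$, refines to a sub-stratum $\delta$ with a \emph{larger} index set $I\supset J$ chosen so that an entire connected component $S$ of $\VR(\bmgI)\cap F^{-1}(]f^*,f^*+\varepsilon[)$ coincides with $\delta\cap F^{-1}(]f^*,f^*+\varepsilon[)\subset\scX$, and then runs a zeroth-Betti-number count on the fibers of $\Fr_I$ over $[f^*,f^*+\varepsilon[$. Because $S$ is a full connected component of the \emph{algebraic} preimage, the count splits as $b_0(\Fr_{Z}^{-1}(z))=b_0(\Fr_{Z\setminus S}^{-1}(z))+b_0(\Fr_{S}^{-1}(z))$, and the emptiness of $\Fr_S^{-1}(f^*)$ forces a jump that produces a critical value. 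The essential idea you are missing is precisely this refinement of $I$: with your pigeonhole choice, $C$ is only Zariski-dense in $V=\VR(\bmgI)$, and nothing prevents $V\cap F^{-1}(D)$ from having pieces outside $\scX$ mixing with $C$; the paper buys exactly the ``algebraic $=$ semi-algebraic on this component'' statement by passing to a smaller stratum.

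The gap you flag is therefore real and not merely technical. Proving that the trivialization of $\Fr_I$ outside $K$ can be taken to preserve $C$ amounts to a Thom--Mather stratified fibration statement for the pair $(V,C)$, which is essentially as hard as the theorem you are trying to prove and does not follow from completeness alone. Your suggested shortcut through $\cK_{\Fr_I}$ also does not close the argument: the sequence characterization~\eqref{eq:bifur-Rabier} requires $\|\x_k\|\cdot\|\grad\Fr_I(\x_k)\|\to 0$, and a bare minimizing sequence carries no gradient information. In the paper the passage to $\cK$ is made \emph{after} Theorem~\ref{thm:bifur-semi}, via the inclusion $\cB_{\Fr_I}\subset\cK_{\Fr_I}$, not the other way around.
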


\begin{proof}
Assume without loss of generality that $f^* = 0$. We will prove~\eqref{eq:restr} by finding $I$ above for which the zero betti number of $\Fr_I^{-1}(0)$ differs from that of any $\Fr_I^{-1}(z)$ with $z\in\R$ close enough to $0$. 

We start by choosing $I$ above. Since $f^*$ is not attained, and $\scX$ is closed (in the Euclidean topology), there exists $\varepsilon>0$ such that all components of $f^{-1}(]0,~\varepsilon[)$ are unbounded. We choose $\varepsilon$ small enough so that $f^{-1}(]0,~z[)$ has the same number of connected components in $\R^n$ for any $z\in~]0,~\varepsilon[$. Furthermore, one can set $\varepsilon$ even smaller if necessary so that each connected component $C$ of $f^{-1}(]0,~\varepsilon[)$ coincides with a connected component of $\sigma\cap F^{-1}(]0,~\varepsilon[)$ for some stratum $\sigma\in\cS(X)$. 

Pick one such stratum $\sigma$ above, together with a component $C$, and let $J\subset [s]$ be the index set containing $[r]$ such that $\VR(\bmg_J)$ is the Zariski closure of $\sigma$ (c.f.~\ref{it:filtr-intersection}). Since $\scX$ is complete, we can choose a sub-stratum $\delta\subset \sigma$ and a superset $I$ containing $J$ such that a connected component of $\VR(\bmgI)\cap F^{-1}(]0,~\varepsilon[) $ coincides with $\delta\cap F^{-1}(]0,~\varepsilon[)$. In what follows, we set $Z:=\VR(\bmgI)$ and $S:=\delta\cap F^{-1}(]0,~\varepsilon[)$.

Now, we use betti numbers counting of preimages under $\Fr_Z$ to finish the proof. Recall that $\cB_{\Fr_Z}$ is finite in $\R$. Then, we may take a smaller $\varepsilon>0$ if necessary so that 
\begin{equation}\label{eq:epsilon1}
]0,~\varepsilon[~\cap~ \cB_{\Fr_Z}= \emptyset
\end{equation} is satisfied. Hence, there exists $k\in\N$, such that for every $z\in~]0,~\varepsilon[$, the zero betti number satisfies
\begin{equation}\label{eq:betti01}
 b_0\big(\!\Fr_Z^{-1}(z)\big) = k.
\end{equation}
 
We finish the proof by contradiction; assume that $0\not\in\cB_{\Fr_Z}$. Then, we have $ b_0\big(\!\Fr_Z^{-1}(0)\big) = k$. Since $S$ is a connected component of $\Fr_Z^{-1}([0,~\varepsilon[)$, for each $z$ in the half-closed interval $[0,~\varepsilon[$, we get 
\[
b_0\big(\!\Fr_Z^{-1}(z)\big) = b_0\big(\!\Fr_{Z\setminus S}^{-1}(z)\big) + b_0\big(\!\Fr_S^{-1}(z)\big).
\] Then, Equality~\eqref{eq:betti01} implies that 
\[
b_0\big(\!\Fr_{Z\setminus S}^{-1}(z)\big) + b_0\big(\!\Fr_{S}^{-1}(z)\big) = b_0\big(\!\Fr_{Z\setminus S}^{-1}(0)\big) + b_0\big(\!\Fr_{S}^{-1}(0)\big).
\] Since $\Fr_{S}^{-1}(0) = \fr_{S}^{-1}(0)=\emptyset$, we get that for each $z\in~]0,~\varepsilon[$, the below equality holds:
\begin{align}\label{eq:betti-inequality}
 b_0\big(\!\Fr_{Z\setminus S}^{-1}(0)\big)> b_0\big(\!\Fr_{Z\setminus S}^{-1}(z)\big).
\end{align} Furthermore, from the definitions of $S$ and $Z$, we get that $\Fr_{Z\setminus S}^{-1}(z)$ is algebraic for each $z\in~[0,~\varepsilon[$. Therefore, inequality~\eqref{eq:betti-inequality} implies that there exists $\x\in \Fr_{Z\setminus S}^{-1}(0)$ such that $\x\in\Crit(\Fr_{Z})$. This contradicts $0\not\in \cB_{\Fr_Z}$.
\end{proof}

In what follows, for every $I\subset[s]$, we use the shorthand notation $\Fr_{\C I}$ for the complexification of the real restricted map $\Fr_{I}$. That is, 
\begin{align*}
\Fr_{\C I}:=\C\!\Fr_{\VC(\bmgI)}: &~ \VC(\bmgI)\longrightarrow\C.
\end{align*} 
\begin{corollary}\label{cor:real-complex} 
Let $\scX$, $F$, and $f$ be as in Theorem~\ref{thm:bifur-semi}. Then, there exists 
a set of indices $I$, such that $[r] \subset I\subset [s]$,
	for which it holds
\[
f^*\in \cK_{\Fr_{\C I}}.
\]
\end{corollary}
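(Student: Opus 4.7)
The corollary follows from Theorem~\ref{thm:bifur-semi} once one establishes the inclusion $\cB_{\Fr_I} \subseteq \cK_{\Fr_{\C I}}$ for the relevant index set $I$. The plan is to apply Theorem~\ref{thm:bifur-semi} to extract some $I$ with $[r]\subset I\subset [s]$ satisfying $f^{*}\in\cB_{\Fr_I}$, to decompose the bifurcation set into its ``critical'' and ``at infinity'' parts, and to handle each part separately by comparing the real and complex descriptions. Because $\scX$ is complete (Definition~\ref{def:X-complete}), both $\VR(\bmgI)$ and $\VC(\bmgI)$ are smooth complete intersections of codimension $\#I$, so the generalized critical set $\cK_{\Fr_{\C I}}$ is well defined.

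Write $\cB_{\Fr_I}=\cB^{0}_{\Fr_I}\cup \cB^{\infty}_{\Fr_I}$. For the first case, if $f^{*}\in\cB^{0}_{\Fr_I}$, then $f^{*}=\Fr_I(\x)$ for some $\x\in\VR(\bmgI)\cap\Crit(\Fr_I)$. The critical condition is the vanishing of the appropriate maximal minors of the Jacobian of $(F,\bmgI)$, and these polynomial identities are field-independent; hence $\x$ also belongs to $\Crit(\Fr_{\C I})$, giving $f^{*}\in\cK^{0}_{\Fr_{\C I}}\subseteq\cK_{\Fr_{\C I}}$.

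For the second case, $f^{*}\in\cB^{\infty}_{\Fr_I}$, I would invoke the inclusion of Eq.~\eqref{eq:Bf-in-Kf} applied to the real polynomial function $\Fr_I$ on the smooth affine variety $\VR(\bmgI)$ to conclude $f^{*}\in\cK^{\infty}_{\Fr_I}$. By the defining condition~\eqref{eq:bifur-Rabier}, there exists a real sequence $\{\x_\ell\}\subset\VR(\bmgI)$ with $\|\x_\ell\|\to\infty$, $\|\x_\ell\|\cdot\|\grad\Fr_I(\x_\ell)\|\to 0$, and $\Fr_I(\x_\ell)\to f^{*}$. Via the inclusion $\VR(\bmgI)\subset\VC(\bmgI)$, and using the fact that the intrinsic gradient of $\Fr_{\C I}$ at any real smooth point agrees (up to identifying $\RR^m\subset\CC^m$) with the real gradient of $\Fr_I$, the same sequence satisfies condition~\eqref{eq:bifur-Rabier} for the complexification. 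Thus $f^{*}\in\cK^{\infty}_{\Fr_{\C I}}\subseteq\cK_{\Fr_{\C I}}$.

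The main subtlety is the transfer step in the second case: one must check that the Rabier condition, stated in terms of the tangential gradient on a smooth affine variety, passes unchanged from the real category to the complex category at real points. Once this identification is verified using the completeness of $\scX$, which guarantees that the smooth structures of $\VR(\bmgI)$ and $\VC(\bmgI)$ are compatible along the real locus, combining the two cases yields $f^{*}\in\cK_{\Fr_{\C I}}$.
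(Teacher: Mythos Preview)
Your proposal is correct and follows essentially the same route as the paper: apply Theorem~\ref{thm:bifur-semi} to obtain $f^{*}\in\cB_{\Fr_I}$, pass to $\cK_{\Fr_I}$ via the inclusion~\eqref{eq:Bf-in-Kf}, and then observe that any real witnessing sequence (or critical point) for the Rabier condition is automatically a witness over $\C$ since $\|z\|_{\C}=\|z\|_{\R}$ for $z\in\R^n$. The only difference is that the paper does not split into the $\cB^{0}$ and $\cB^{\infty}$ cases, handling the full inclusion $\cK_{\Fr_I}\subset\cK_{\Fr_{\C I}}$ in one line via the norm identity; your case analysis is correct but unnecessary.
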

\begin{proof} Theorem~\ref{thm:bifur-semi}, together with the Rabier property~\eqref{eq:bifur-Rabier} imply that 
\[
f^*\in \cB_{\Fr_I} \subset\cK_{\Fr_{I}} ,
\] 
for some $I$ satisfying $[r]\subset I \subset [s]$. Finally, since we have $\Vert z \Vert_\C  =\Vert z \Vert_\R$ for any $z\in\R^n$, we get 
\begin{equation}\label{eq:Rabier-stratum}
\cK_{\Fr_{I}} \subset \cK_{\Fr_{\C I}} ,
\end{equation} 
for every $I\subset[s]$. This completes the proof. 
\end{proof}

\section{Constrained optimization and asymptotic critical values}
\label{sec:constrained-opt}

We provide bounds on the absolute values of points in the Rabier set of polynomial functions $f: X \longrightarrow \CC$ over smooth affine varieties $X\subset\C^n$. Whenever $f$ is given by real polynomials, this eventually leads to bounds for the optimization problem $\inf_{\x \in X\cap \R^n} f(\x)$.

\subsection{Asymptotic critical values over a smooth variety $X \subset \CC^n$}
\label{sec:acv-X}

Consider a polynomial $F\in\Q[x_1,\ldots,x_n]$ and a smooth algebraic variety $X \subseteq \CC^n$ of dimension $\delta = n-r$,
that is the zero set of the polynomials $\g = \{ g_1, \dots, g_r \} \in \QQ[\x]$. 
As our goal is to bound the asymptotic critical values of the function $f:=\Fr_X$, it is without loss of generality to consider $X$ to be complete intersection; see Theorem~\ref{thm:make-square} and Remark~\ref{rem:make-square}.
If we have more than $r$ polynomials, then we can consider $r$ generic linear combinations of them to construct a "complete intersection" system,
at the expense of adding some additional component(s) in $X$. 
The original components of the initial system can be identified by considering several generic linear combinations if necessary. As this technique does not change a lot the bitsize of the polynomial to compute with, and we only interested to bound (and not compute) the related quantities, we do not detail further. 
We refer the reader to \cite{det-chow-24} for additional details 
and an application in computing the Chow form of a variety.

%
%

Consider the $(r+1)\times n$ matrix 
\[
	C = 
	\left(
	\begin{array}{cc}
		\nabla F \\
		\Jac(\g)
	\end{array}
	\right) ,
\]
where $\nabla F$ is the gradient of $F$ and has dimension $1 \times n$, 
while $\Jac(\g)$ is the Jacobian matrix of $\g$ and has dimension $r \times n$.
Consider the set of all subsets of $[n]$ of cardinality $r+1$;
we denoted by $\left[\begin{smallmatrix}n \\r+1\end{smallmatrix}\right]$.
Then, $I\in \left[\begin{smallmatrix}n \\r+1\end{smallmatrix}\right]$ $\Leftrightarrow$ $I \subseteq [n]$ and  $\abs{I} = r+1$. There are $s = {n \choose r+1}$ such subsets $I$ and to each of them, we associate an integer $i$, where $1 \leq i \leq s$.

Let $M_{i}$ be the $(r+1) \times (r+1)$ square submatrix of $C$, obtained by selecting the columns of $C$ with indices in $I$.
For $j \in I$, let $M_{i, j}$ be the square $r \times r$ submatrix of $M_{i}$, obtained by omitting the first row (which corresponds to $\nabla F$) and the $j$-th column; there are $r+1$ such matrices, for every $i$.

For a specific $i$ and $j \in I$, let 
$m_{i}$ and $m_{i,j}$ be the determinants of $M_{i}$ and $M_{i,j}$, respectively;
they are polynomials in $\ZZ[\x]$. 
We also need the definition of the rational function 
\[
w_{i, j} = \frac{m_{i}}{m_{i, j}} = \frac{\det M_i}{\det M_{i,j}} \enspace.
\]

Consider the vector $\bj = (j_1, \dots, j_s) \in \NN^s$, where $j_i \in I$.
There are $(r+1)^s$ such vectors.
For each $\bj$, we consider the  map $\Phi_{\bj}$, which is 
\begin{equation}
\label{eq:Phi-j}
\begin{array}{cccclll}
	\Phi_{\bj} &:& X &\longrightarrow& \CC^n \times \CC^{(n+1)s} \\\
	&& \x &\longmapsto& ( F(\x), \{ w_{i, j_i}(\x), x_1 w_{i, j_i}(\x), \dots, x_n w_{i, j_i}(\x) \}_{i \in [s]}) .
\end{array}
 \end{equation}

If $\cG_{\bj} := \overline{\image(\Phi_{\bj})}$,
and $\cG = \bigcup_{\bj} \cG_{\bj}$,
then $\cK(f) = L \cap \Gamma$, where $L = \underbrace{(0, \dots, 0)}_{(n+1)s \text{ times}} \times \CC$ (see~\cite{jelonek2005quantitative}). 

Upper and lower bounds on the roots of these polynomials, will also hold truer for the asymptotic critical values.

\subsubsection{Resultant systems using determinants}
\label{sec:res-sys}

Our presentation is based on van der Waerden~\cite{vdWaerden-alebra-37} and Yap~\cite{Yap-algebra-book}.
Let $\bm{A} = \{A_1, \dots, A_p\} \subseteq (\ZZ[\bm{c}])[\x]$ be a system of polynomial equations in the variables $\x$,
the coefficients of which are polynomials with integer coefficients in the additional set of variables $\bm{c}$. 
Let the degree of each polynomial $A_i$ with respect to $\x$
 be $d_i$
and $d = \max_{i \in [p]} d_i$.

The resultant system is a set of polynomials in the coefficients $\bm{c}$ having the property that all of them vanish if 
the polynomials in $\bm{A}$ have a common non-trivial solution.
In the case where $p=n$, then the resultant system consists of a single polynomial, which we call the (homogeneous) resultant of the system.

Let $\nu_m = {m + n - 1 \choose n-1} \leq m^{n-1}$, where $m$ is a positive integer that we will specify in the sequel.
Let $\mathtt{P}^m = \{ \x^{\balpha} \,|\, \abs{\balpha} = m \}$,
that is the set of all monomials in $\x$ of degree $m$.
Next, we consider the set 
\[
	\bm{A}_m = \Setbar{ \x^{\balpha}A}{\deg(\x^{\balpha}A) = m, A \in \bm{A}, \x^{\balpha} \in \mathtt{P}^m } \enspace.
\]
It holds $\abs{\bm{A}_m} \geq \nu_m$.
Let $M_m$ be the $\left(\abs{\bm{A}_m} \times \nu_m\right)$-matrix,
the rows of which correspond to the polynomials in $\bm{A}_m$
and its columns correspond to the elements of $\mathtt{P}^m$.

Let $R_m \subseteq \ZZ[\bm{c}]$ be the set of all the $\left(\nu_m \times \nu_m\right)$-minors of the matrix $M_m$.

\begin{theorem}[{\cite[XI~Thm.~9~\&~13]{Yap-algebra-book}}]
	If $m \geq 1 + n(13~d^n - 1)$, then $R_m$ is a resultant system for $\bm{A}$. 
\end{theorem}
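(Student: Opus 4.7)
The plan is to prove the equivalence: for every specialization $\bm{c}_0$ of the parameters $\bm{c}$, the polynomials in $R_m$ all vanish at $\bm{c}_0$ if and only if the specialized system $\bm{A}(\bm{c}_0, \x)$ admits a nontrivial common zero in $\overline{\KK}^n$. Since the stated threshold $m \geq 1 + n(13 d^n - 1)$ depends only on $n$ and $d$, a single value of $m$ works uniformly across all specializations, which is exactly the resultant-system property.

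For the easy direction, suppose $\x_0 \in \overline{\KK}^n \setminus \{0\}$ is a common zero of $\bm{A}(\bm{c}_0, \,\cdot\,)$. Consider the column vector $v \in \overline{\KK}^{\nu_m}$ whose coordinate indexed by the monomial $\x^{\bbeta} \in \mathtt{P}^m$ equals $\x_0^{\bbeta}$. Since $\x_0 \neq 0$, at least one degree-$m$ monomial evaluates nontrivially, so $v \neq 0$. For each row of $M_m$ corresponding to $\x^{\balpha} A_i \in \bm{A}_m$, the pairing with $v$ gives $(\x^{\balpha} A_i)(\x_0) = \x_0^{\balpha} A_i(\x_0) = 0$. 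Hence $M_m v = 0$, so $\mathrm{rank}(M_m) < \nu_m$ and every $\nu_m \times \nu_m$ minor of $M_m$ vanishes at $\bm{c}_0$.

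For the harder direction, I prove the contrapositive: if $\bm{A}(\bm{c}_0, \x)$ has no nontrivial common zero in $\overline{\KK}^n$, then some element of $R_m$ does not vanish at $\bm{c}_0$. By the projective Nullstellensatz, absence of a nontrivial common zero is equivalent to the statement that the homogeneous ideal generated by (the homogenizations of) the $A_i(\bm{c}_0, \x)$ contains every monomial of sufficiently high degree. Quantitative forms of this result, in the Brownawell--Kollár tradition and in the specific formulation used in Yap's text, supply the explicit threshold: for $m \geq 1 + n(13 d^n - 1)$, every monomial $\x^{\bbeta} \in \mathtt{P}^m$ admits a representation $\x^{\bbeta} = \sum_{i,\, \balpha} \lambda_{i, \balpha} \, \x^{\balpha} A_i(\bm{c}_0, \x)$ with $\lambda_{i, \balpha} \in \overline{\KK}$ and $\deg(\x^{\balpha} A_i) = m$. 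Consequently the row space of $M_m$ specialized at $\bm{c}_0$ spans all of $\overline{\KK}^{\nu_m}$, so $M_m$ has full column rank and at least one $\nu_m \times \nu_m$ minor is nonzero at $\bm{c}_0$.

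The main technical hurdle is justifying the precise constant $13 d^n - 1$; this is not an incidental numerical artifact but rather the exact form of the effective projective Nullstellensatz invoked. The rest of the argument is purely linear-algebraic: it rests on the standard dictionary between rank drop of $M_m$, nonzero kernel vectors supported on monomials, and solvability of the system. An alternative route that avoids quoting the Nullstellensatz as a black box would proceed via Hilbert-function/Castelnuovo--Mumford regularity estimates for the ideal generated by $\bm{A}$, but this replaces the quoted bound with an equivalent one of the same asymptotic shape, so the single exponential dependence $d^n$ is unavoidable.
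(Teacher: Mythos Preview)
The paper does not prove this theorem; it is quoted verbatim from Yap's book (Chapter~XI, Theorems~9 and~13) and used as a black box in \S\ref{sec:res-sys}. There is therefore no ``paper's own proof'' to compare against.

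That said, your argument is the standard one and matches what one finds in Yap's text. The easy direction is exactly as you wrote: a nontrivial common zero $\x_0$ produces the Veronese vector $(\x_0^{\bbeta})_{|\bbeta|=m}$ in the kernel of $M_m$, forcing all maximal minors to vanish. For the converse you correctly identify the effective projective Nullstellensatz as the source of the bound: if the specialized system has no nontrivial zero, then the homogeneous ideal it generates contains $\mathfrak{m}^m$ for $m$ at least the stated threshold, so every degree-$m$ monomial is a $\overline{\KK}$-combination of the rows of $M_m$, giving full column rank and a nonvanishing minor. The constant $13\,d^n$ is precisely the Nullstellensatz bound Yap derives earlier in Chapter~XI; as you note, sharper bounds (Brownawell, Koll\'ar) would work equally well and only change the constant.

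One small point worth making explicit: the construction presupposes that the $A_i$ are homogeneous in $\x$ (otherwise the rows $\x^{\balpha}A_i$ would not be supported on $\mathtt{P}^m$ alone and the matrix $M_m$ would not be well-defined as described). You gesture at this with ``(the homogenizations of)'', but in a self-contained write-up you should either state the homogeneity hypothesis up front or homogenize explicitly before building $M_m$.
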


The $13~d^n$ factor in the bound for $m$ corresponds to an effective bound on the Nullstellensatz. We note that we can also use improved bounds of effective Nullstellensatz, but this bound suffices for our purposes. 
For our purposes, we do not need to count or manipulate with the polynomials in $R_m$. We just need to bound their degree, as polynomials in $\bm{c}$, and their bitsize.

\subsubsection{Bounding the asymptotic critical values using resultant systems}
\label{sec:acv-X-res-sys}


To obtain worst case bound on the asymptotic critical values, it suffices to consider worst case bounds for $\cG_{\bj}$, for a fixed pair $(\bj,I)$, with $I\in\left[\begin{smallmatrix}n \\r+1\end{smallmatrix}\right]$ and  $\bj = (j_1, \dots, j_s) \in \NN^s$ satisfying $j_i \in I$.
We emphasize that our goal is \emph{not} to compute these values, but rather to bound them effectively.

Let $X_{\bj} = \VV( \set{m_{i,j_i}}_{i \in [s]}) \subseteq \CC^n$
be the zero set of all denominators in $\Phi_{\bj}$, Eq.~\eqref{eq:Phi-j}.
Let $\delta_{i} = \dim(X_{\bj})$ be the dimension $X_{\bj}$. 
As it is not a complete intersection, we can consider $n - \delta_i$ generic linear combinations of the polynomials $m_{i,j_i}$.
Let these new polynomials be $\set{h_1, \dots, h_{n-\delta_i}}$. 

Consider the set of polynomials 
\begin{multline}
	J = \Big\{
	\set{g_i(\x)}_{i \in [r]}, F(\x) - z,  \\
	\set{m_{i,j_i}(\x) y_i - m_i(\x), m_{i,j_i}(\x) y_i - x_1 m_i(\x),
	\dots,  m_{i,j_i}(\x) y_i - x_n m_i(\x) }_{i\in[s]}, \\	
	t \, h(\x) -1 
	\Big\}
	\subseteq \ZZ[\x, t, y_1, \dots, y_s, z] ,
\end{multline}
where $h(\x) = \prod_{i \in [n - \delta_i]} h_i(\x)$.

This set contains $r + 1 + (n+1){n \choose r+1} + 1$ equations.
We should eliminate the variable $(\x, t)$; this would result in polynomials in $\ZZ[y_1, \dots, y_s, z]$. Then, if we set $y_i = 0$, we obtain univariate polynomials in $z$, the roots of which contains the asymptotic critical values of $f$.  

Let $F$ be of size $(d, \tau)$ and $g_i$ of size $(d_1, \tau)$.
Standard calculations based on Claim~\ref{claim:mul-mpoly}
result that 
$m_{i, j}$ is of size $(\OO(r d_1), \sOO(r(d + n + \tau)))$,
$m_i$ is of size $(\OO(d + rd_1), \sOO(r(d + n + \tau) + d_1))$.

Based on the results in the appendix, the polynomials $h_i$ 
have the same size as the polynomials $m_{i,j}$. 
Thus, $h(\x)$ is of size $(\OO(n r d_1), \sOO(n r(d + n + \tau)))$,

To eliminate the variables $\x$ from the polynomials in $J$,
we will use resultant systems from Sec.~\ref{sec:res-sys}.
The maximum degree of the involved polynomials is $\OO(n r d_1)$,
thus the Nullstellensatz bound becomes $\OO((n r d_1)^n)$
which in turn implies that the various matrices 
are of dimension $\OO((n r d_1)^{n^2})$.

The elements of the matrix (or matrices) are polynomials 
in the variables $y_1, \dots, y_s, z$. 
Their degree with respect to $y_i$ and $z$ is one.
Their maximum bitsize is $\sOO(r(d + n + \tau) + d_1))$.
Hence, their determinant is a polynomial in $(\ZZ[y_1, \dots, y_s])[z]$ of degree $\OO((n r d_1)^{n^2})$.
and bitsize 
\[
\eta = \sOO(r(d + n + \tau) + d_1)(n r d_1)^{n^2} ) .
\] 
If we set the variables $y_i$ to zero, then we obtain a univariate polynomial in $\ZZ[z]$ of the same size, the roots of which contain the asymptotic critical values. 

The arguments above yield the following result.

\begin{theorem}
	Let $F \in \ZZ[\x]$ be of degree $d$ and bitsize $\tau$.
	Let $X = \VV(g_1, \dots, g_r) \cap \RR^n$ be a smooth real algebraic variety and $g_i$ be of size $(d_1, \tau)$.
	The optimum value of the problem $f^* = \inf_{\x \in X} F(\x)$ 
	is an algebraic number of degree $\OO((n r d_1)^{n^2})$
	such that 
	$2^{-\eta} \leq \abs{f^*} \leq 2^{\eta}$, where 
$\eta = \sOO(r(d + n + \tau) + d_1)(n r d_1)^{n^2} )$. 

\end{theorem}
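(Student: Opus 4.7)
The plan is to combine three ingredients already set up in the paper: the reduction from the infimum to an asymptotic critical value (Corollary~\ref{cor:real-complex}), the Jelonek--Kurdyka parametrization of the Rabier set (Section~\ref{sec:acv-X}), and elimination through resultant systems (Section~\ref{sec:res-sys}). First I split into two cases according to whether $f^{*}$ is attained on $X$. If it is, then $f^{*}$ is a critical value of $F|_{X}$ and the single-exponential estimates~\eqref{eq:alg_deg_attained}--\eqref{eq:bounds_attained} of Jeronimo, Perrucci, and Tsigaridas already yield strictly stronger bounds than the ones I need. So I may assume $f^{*}$ is not attained; because $X$ has no inequality constraints it is complete in the sense of Definition~\ref{def:X-complete} with $s=r$, and Corollary~\ref{cor:real-complex} applied to $I=[r]$ gives that $f^{*}$ lies in $\cK_{\Fr_{\CC X}}$, i.e., $f^{*}$ is an asymptotic critical value of the complexification of $F|_{X}$.

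Second, I encode $\cK_{\Fr_{\CC X}}$ algebraically through the polynomial system $J$ from Section~\ref{sec:acv-X-res-sys}. For each tuple $\bj=(j_{1},\dots,j_{s})$ with $s=\binom{n}{r+1}$, the image of the Jelonek--Kurdyka map $\Phi_{\bj}$ of~\eqref{eq:Phi-j} is cut out, up to Zariski closure, by the vanishing of the $g_{i}$'s, the equations $m_{i,j_{i}}(\x)y_{i}-x_{k}m_{i}(\x)=0$, $F(\x)=z$, and the Rabinowitsch relation $t\,h(\x)-1=0$ that saturates the solution set away from the locus $X_{\bj}$ on which the denominators vanish. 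Eliminating the variables $\x$ and $t$ from $J$ and specialising $y_{1}=\cdots=y_{s}=0$ produces a polynomial $P_{\bj}(z)\in\ZZ[z]$ whose roots contain $L\cap\overline{\image(\Phi_{\bj})}$; the product $P(z)=\prod_{\bj}P_{\bj}(z)$ is then a single univariate polynomial whose roots contain $\cK_{\Fr_{\CC X}}$, and in particular $f^{*}$.

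Third, I execute this elimination with Yap's resultant systems rather than Gr\"obner bases, because the latter could cause double-exponential blow-ups in bitsize. By Claim~\ref{claim:mul-mpoly} and the size estimates already recorded in Section~\ref{sec:acv-X-res-sys}, every polynomial in $J$ has $\x$-degree $\OO(nrd_{1})$ and integer bitsize $\sOO(r(d+n+\tau)+d_{1})$. Plugging the value $\OO(nrd_{1})$ into the effective-Nullstellensatz bound $m\geq 1+n(13\,d^{n}-1)$ quoted from Yap, the Macaulay-type matrix $M_{m}$ has dimension $\nu_{m}=\OO((nrd_{1})^{n^{2}})$, and its entries are affine in $(y_{1},\dots,y_{s},z)$. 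Hence every $(\nu_{m}\times\nu_{m})$-minor is a polynomial of total degree $\OO((nrd_{1})^{n^{2}})$ and bitsize $\sOO\!\left((r(d+n+\tau)+d_{1})(nrd_{1})^{n^{2}}\right)=\eta$; specialising the $y_{i}$ to zero does not enlarge either quantity, and multiplying over the $\bj$'s only inflates degree and bitsize by a polynomial factor that is absorbed by the $\sOO$ notation.

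Finally, the algebraic degree of $f^{*}$ is bounded by the degree of $P(z)$, namely $\OO((nrd_{1})^{n^{2}})$, and applying the classical Cauchy--Mignotte inequality to the roots of a non-zero polynomial in $\ZZ[z]$ yields $2^{-\eta}\leq |f^{*}|\leq 2^{\eta}$ as claimed. The main obstacle is the propagation of size estimates through two nested layers of determinants: the entries of the Macaulay matrix are themselves minors $m_{i}$ and $m_{i,j}$ of the Jacobian, and one must check that compounding them with a second round of elimination still yields a polynomial whose degree and bitsize are only singly exponential in $n^{2}$. It is precisely this step that forces the exponent $n^{2}$ in $\eta$ rather than the exponent $n$ in~\eqref{eq:bounds_attained}, and that rules out a direct reuse of the critical-values argument of Jeronimo, Perrucci, and Tsigaridas.
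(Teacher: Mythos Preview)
Your approach is essentially identical to the paper's: reduce to asymptotic critical values via Corollary~\ref{cor:real-complex}, encode the Rabier set through the Jelonek--Kurdyka maps $\Phi_{\bj}$ and the system $J$ of Section~\ref{sec:acv-X-res-sys}, and eliminate $(\x,t)$ using the resultant systems of Section~\ref{sec:res-sys}, tracking sizes through the Macaulay-type matrix of dimension $\OO((nrd_1)^{n^2})$.

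One point needs correction. You claim that ``multiplying over the $\bj$'s only inflates degree and bitsize by a polynomial factor that is absorbed by the $\sOO$ notation.'' This is false: there are $(r+1)^{s}$ tuples $\bj$ with $s=\binom{n}{r+1}$, which is doubly exponential in $n$ for $r$ near $n/2$, so the product $P(z)=\prod_{\bj}P_{\bj}(z)$ would destroy your single-exponential bound. The paper sidesteps this entirely by observing (as in the sentence ``it suffices to consider worst case bounds for $\cG_{\bj}$, for a fixed pair $(\bj,I)$'') that $f^{*}\in L\cap\cG_{\bj}$ for \emph{some} $\bj$, hence $f^{*}$ is a root of that single $P_{\bj}$; the degree and Cauchy bounds for this one polynomial already deliver the theorem. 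You should drop the product and argue the same way.
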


Finally, thanks to Corollary~\ref{cor:real-complex}, we deduce our min Theorem~\ref{thm:main}.

\section{The Newton non-degenerate case}\label{sec:Newton-non}
As Theorem~\ref{thm:bifur-semi} presents, in order to locate the infimum of a semi-algebraic function, one can consider a polynomial function instead, and compute its bifurcation set. The latter, however, is intractable for arbitrary functions. Accordingly, we can use the Rabier set to effectively compute a superset containing it. Albeit the computation itself has high complexity as we will see in~\S\ref{sec:constrained-opt}. 

In this section, we introduce a large family of polynomial functions, and show that one can effectively approximate their bifurcation set using less expensive methods than the Rabier set.

\subsection{Preliminaries on polytopes}
\label{sub:prelim} 
A subset $\Pi\subset\R^n$ is called a \emph{polyhedron} if it is the intersection of finitely-many closed half-spaces. The boundary of one such half-space is called a \emph{supporting hyperplane} of $\Pi$. We say that a set $\Phi$ is a \emph{face} of $\Pi$,
we indicate this using the notation $\Phi\prec \Pi$, if it is the intersection of a supporting hyperplane $H$ of $\Pi$ with its boundary, i.e., $\Phi=H\cap\partial \Pi$. We say that $\Phi$ is  \emph{origin} if $\Phi$ contains the point $\unze:=(0,\ldots,0)\in\R^n$.
A \emph{polytope} is a bounded polyhedron.

A \emph{tuple of polytopes}, or a \emph{tuple} for short, is a map $\bdel$ from a finite set $K\subset \N$ to the set of polytopes in $\R^n$. We call $K$ the \emph{support} of $\bdel$ and we denote it by $[\bdel]$. The Minkowski sum of elements in $\bdel$
\[
	\left\{ \sum\nolimits_{k\in K}\bma_k \,|\, \bma_k \in \Delta_k  \right\},
\]
 is also a polytope; we denote it by $\sum \bdel$. 
 In our setting, the \emph{Minkowski sum} of any two subsets $X,Y\subset\R^n$ is the coordinate-wise sum $
X+Y:=\{x+y~|~x\in X,~y\in Y\}$. 
The \emph{dimension} of $\bdel$ is defined as
\begin{align}\label{eq:dimension}
\dim \bdel:= & \dim (\sum \bdel)- \# [\bdel].
\end{align}

For any $i\in [\bdel]$, 
the $i$-th polytope in the tuple is $\Delta_i$. 
For any $I\subset [\bdel]$,
$\bdel_I:= \Setbar{\Delta_i}{i \in I}$ is the \emph{sub-tuple} (of $\bdel$ associated to $I$). 
 A tuple $\bgam$ is a \emph{tuple-face} of $\bdel$ (or, simply, \emph{face} whenever it is clear from the context) if $[\bgam] = [\bdel]$,
 $\Gamma_i\prec\Delta_i$, for each $i\in[\bgam]$, 
 and $\sum\bgam\prec\sum \bdel$. 
  
 We say that $\bgam$ is a \emph{facing} of $\bdel$,
 if there is a face $\bgam'$ of $ \bdel$ 
 such that $\bgam=\bgam'_I$ and $I = [\bgam]$.
 We also use the notation $\bgam\prec \bdel$ for the facings.
%

\begin{definition}
	\label{def:important_facing}
	A facing $\bgam\prec\bdel$ is said to be \emph{important}, if there exists a face $\bgam'\prec\bdel$  such that  $\bgam = \bgam'_{[\bgam]}$ and 
\[
	\dim \bgam\leq \dim\bgam'_I,\quad\forall~I\supset [\bgam].
\]
\end{definition}

\begin{figure}[htb]

\tikzset{every picture/.style={line width=0.75pt}} 

\begin{tikzpicture}[x=0.75pt,y=0.75pt,yscale=-1,xscale=1]

\draw  [fill={rgb, 255:red, 0; green, 0; blue, 0 }  ,fill opacity=0.4 ] (273.83,240.5) -- (251.67,240.5) -- (251.67,218.71) -- cycle ;
\draw [color={rgb, 255:red, 155; green, 155; blue, 155 }  ,draw opacity=0.4 ] [dash pattern={on 3pt off 1.5pt}]  (272.99,70.34) -- (285.01,63.82) ;
\draw [shift={(286.77,62.87)}, rotate = 151.53] [fill={rgb, 255:red, 155; green, 155; blue, 155 }  ,fill opacity=0.4 ][line width=0.08]  [draw opacity=0] (4.8,-1.2) -- (0,0) -- (4.8,1.2) -- cycle    ;
\draw [color={rgb, 255:red, 155; green, 155; blue, 155 }  ,draw opacity=0.4 ] [dash pattern={on 3pt off 1.5pt}]  (161.83,81.5) -- (215.43,53.2) ;
\draw [shift={(217.2,52.27)}, rotate = 152.17] [fill={rgb, 255:red, 155; green, 155; blue, 155 }  ,fill opacity=0.4 ][line width=0.08]  [draw opacity=0] (4.8,-1.2) -- (0,0) -- (4.8,1.2) -- cycle    ;
\draw    (161.83,81.5) -- (202.75,59.98) ;
\draw [color={rgb, 255:red, 155; green, 155; blue, 155 }  ,draw opacity=0.4 ] [dash pattern={on 3pt off 1.5pt}]  (71.83,81.5) -- (129.83,81.5) ;
\draw [shift={(131.83,81.5)}, rotate = 180] [fill={rgb, 255:red, 155; green, 155; blue, 155 }  ,fill opacity=0.4 ][line width=0.08]  [draw opacity=0] (4.8,-1.2) -- (0,0) -- (4.8,1.2) -- cycle    ;
\draw [color={rgb, 255:red, 155; green, 155; blue, 155 }  ,draw opacity=0.4 ] [dash pattern={on 3pt off 1.5pt}]  (71.67,81.5) -- (71.67,43.5) ;
\draw [shift={(71.67,41.5)}, rotate = 90] [fill={rgb, 255:red, 155; green, 155; blue, 155 }  ,fill opacity=0.4 ][line width=0.08]  [draw opacity=0] (4.8,-1.2) -- (0,0) -- (4.8,1.2) -- cycle    ;
\draw  [color={rgb, 255:red, 0; green, 0; blue, 0 }  ,draw opacity=1 ][fill={rgb, 255:red, 0; green, 0; blue, 0 }  ,fill opacity=1 ] (92.69,81.5) .. controls (92.69,80.87) and (93.2,80.35) .. (93.83,80.35) .. controls (94.47,80.35) and (94.98,80.87) .. (94.98,81.5) .. controls (94.98,82.13) and (94.47,82.65) .. (93.83,82.65) .. controls (93.2,82.65) and (92.69,82.13) .. (92.69,81.5) -- cycle ;
\draw  [color={rgb, 255:red, 0; green, 0; blue, 0 }  ,draw opacity=1 ][fill={rgb, 255:red, 0; green, 0; blue, 0 }  ,fill opacity=1 ] (114.69,81.5) .. controls (114.69,80.87) and (115.2,80.35) .. (115.83,80.35) .. controls (116.47,80.35) and (116.98,80.87) .. (116.98,81.5) .. controls (116.98,82.13) and (116.47,82.65) .. (115.83,82.65) .. controls (115.2,82.65) and (114.69,82.13) .. (114.69,81.5) -- cycle ;
\draw [color={rgb, 255:red, 155; green, 155; blue, 155 }  ,draw opacity=0.4 ] [dash pattern={on 3pt off 1.5pt}]  (161.83,81.5) -- (199.83,81.5) ;
\draw [shift={(201.83,81.5)}, rotate = 180] [fill={rgb, 255:red, 155; green, 155; blue, 155 }  ,fill opacity=0.4 ][line width=0.08]  [draw opacity=0] (4.8,-1.2) -- (0,0) -- (4.8,1.2) -- cycle    ;
\draw [color={rgb, 255:red, 155; green, 155; blue, 155 }  ,draw opacity=0.4 ] [dash pattern={on 3pt off 1.5pt}]  (161.67,81.5) -- (161.67,43.5) ;
\draw [shift={(161.67,41.5)}, rotate = 90] [fill={rgb, 255:red, 155; green, 155; blue, 155 }  ,fill opacity=0.4 ][line width=0.08]  [draw opacity=0] (4.8,-1.2) -- (0,0) -- (4.8,1.2) -- cycle    ;
\draw  [color={rgb, 255:red, 0; green, 0; blue, 0 }  ,draw opacity=1 ][fill={rgb, 255:red, 0; green, 0; blue, 0 }  ,fill opacity=1 ] (181.15,70.74) .. controls (181.15,70.11) and (181.66,69.59) .. (182.29,69.59) .. controls (182.92,69.59) and (183.44,70.11) .. (183.44,70.74) .. controls (183.44,71.37) and (182.92,71.89) .. (182.29,71.89) .. controls (181.66,71.89) and (181.15,71.37) .. (181.15,70.74) -- cycle ;
\draw  [color={rgb, 255:red, 0; green, 0; blue, 0 }  ,draw opacity=1 ][fill={rgb, 255:red, 0; green, 0; blue, 0 }  ,fill opacity=1 ] (201.6,59.98) .. controls (201.6,59.35) and (202.12,58.83) .. (202.75,58.83) .. controls (203.38,58.83) and (203.9,59.35) .. (203.9,59.98) .. controls (203.9,60.61) and (203.38,61.12) .. (202.75,61.12) .. controls (202.12,61.12) and (201.6,60.61) .. (201.6,59.98) -- cycle ;
\draw  [dash pattern={on 3.75pt off 1.5pt}]  (251.83,81.5) -- (272.99,70.34) ;
\draw [color={rgb, 255:red, 155; green, 155; blue, 155 }  ,draw opacity=0.4 ] [dash pattern={on 3pt off 1.5pt}]  (251.83,81.5) -- (289.83,81.5) ;
\draw [shift={(291.83,81.5)}, rotate = 180] [fill={rgb, 255:red, 155; green, 155; blue, 155 }  ,fill opacity=0.4 ][line width=0.08]  [draw opacity=0] (4.8,-1.2) -- (0,0) -- (4.8,1.2) -- cycle    ;
\draw [color={rgb, 255:red, 155; green, 155; blue, 155 }  ,draw opacity=0.4 ] [dash pattern={on 3pt off 1.5pt}]  (251.67,81.5) -- (251.67,43.5) ;
\draw [shift={(251.67,41.5)}, rotate = 90] [fill={rgb, 255:red, 155; green, 155; blue, 155 }  ,fill opacity=0.4 ][line width=0.08]  [draw opacity=0] (4.8,-1.2) -- (0,0) -- (4.8,1.2) -- cycle    ;
\draw  [color={rgb, 255:red, 0; green, 0; blue, 0 }  ,draw opacity=1 ][fill={rgb, 255:red, 0; green, 0; blue, 0 }  ,fill opacity=1 ] (272.69,81.5) .. controls (272.69,80.87) and (273.2,80.35) .. (273.83,80.35) .. controls (274.47,80.35) and (274.98,80.87) .. (274.98,81.5) .. controls (274.98,82.13) and (274.47,82.65) .. (273.83,82.65) .. controls (273.2,82.65) and (272.69,82.13) .. (272.69,81.5) -- cycle ;
\draw  [color={rgb, 255:red, 0; green, 0; blue, 0 }  ,draw opacity=1 ][fill={rgb, 255:red, 0; green, 0; blue, 0 }  ,fill opacity=1 ] (250.69,59.71) .. controls (250.69,59.08) and (251.2,58.56) .. (251.83,58.56) .. controls (252.47,58.56) and (252.98,59.08) .. (252.98,59.71) .. controls (252.98,60.34) and (252.47,60.85) .. (251.83,60.85) .. controls (251.2,60.85) and (250.69,60.34) .. (250.69,59.71) -- cycle ;
\draw  [color={rgb, 255:red, 0; green, 0; blue, 0 }  ,draw opacity=1 ][fill={rgb, 255:red, 0; green, 0; blue, 0 }  ,fill opacity=1 ] (271.85,70.34) .. controls (271.85,69.71) and (272.36,69.19) .. (272.99,69.19) .. controls (273.63,69.19) and (274.14,69.71) .. (274.14,70.34) .. controls (274.14,70.97) and (273.63,71.48) .. (272.99,71.48) .. controls (272.36,71.48) and (271.85,70.97) .. (271.85,70.34) -- cycle ;
\draw    (251.83,81.5) -- (251.83,59.71) ;
\draw    (251.83,81.5) -- (273.83,81.5) ;
\draw    (272.99,70.34) -- (251.83,59.71) ;
\draw    (273.83,81.5) -- (272.99,70.34) ;
\draw [color={rgb, 255:red, 155; green, 155; blue, 155 }  ,draw opacity=0.4 ] [dash pattern={on 3pt off 1.5pt}]  (71.67,81.5) -- (125.26,53.2) ;
\draw [shift={(127.03,52.27)}, rotate = 152.17] [fill={rgb, 255:red, 155; green, 155; blue, 155 }  ,fill opacity=0.4 ][line width=0.08]  [draw opacity=0] (4.8,-1.2) -- (0,0) -- (4.8,1.2) -- cycle    ;
\draw    (71.83,81.5) -- (115.83,81.5) ;
\draw  [color={rgb, 255:red, 0; green, 0; blue, 0 }  ,draw opacity=1 ][fill={rgb, 255:red, 255; green, 255; blue, 255 }  ,fill opacity=1 ] (69.02,81.5) .. controls (69.02,80.04) and (70.2,78.85) .. (71.67,78.85) .. controls (73.13,78.85) and (74.32,80.04) .. (74.32,81.5) .. controls (74.32,82.96) and (73.13,84.15) .. (71.67,84.15) .. controls (70.2,84.15) and (69.02,82.96) .. (69.02,81.5) -- cycle ;
\draw  [color={rgb, 255:red, 0; green, 0; blue, 0 }  ,draw opacity=1 ][fill={rgb, 255:red, 255; green, 255; blue, 255 }  ,fill opacity=1 ] (249.02,81.5) .. controls (249.02,80.04) and (250.2,78.85) .. (251.67,78.85) .. controls (253.13,78.85) and (254.32,80.04) .. (254.32,81.5) .. controls (254.32,82.96) and (253.13,84.15) .. (251.67,84.15) .. controls (250.2,84.15) and (249.02,82.96) .. (249.02,81.5) -- cycle ;
\draw  [color={rgb, 255:red, 255; green, 255; blue, 255 }  ,draw opacity=1 ][fill={rgb, 255:red, 255; green, 255; blue, 255 }  ,fill opacity=1 ] (263.85,73.85) .. controls (263.85,72.72) and (264.77,71.81) .. (265.9,71.81) .. controls (267.03,71.81) and (267.95,72.72) .. (267.95,73.85) .. controls (267.95,74.99) and (267.03,75.9) .. (265.9,75.9) .. controls (264.77,75.9) and (263.85,74.99) .. (263.85,73.85) -- cycle ;
\draw    (273.83,81.5) -- (267.68,75.41) -- (251.83,59.71) ;
\draw  [color={rgb, 255:red, 0; green, 0; blue, 0 }  ,draw opacity=1 ][fill={rgb, 255:red, 0; green, 0; blue, 0 }  ,fill opacity=1 ] (92.69,172.5) .. controls (92.69,171.87) and (93.2,171.35) .. (93.83,171.35) .. controls (94.47,171.35) and (94.98,171.87) .. (94.98,172.5) .. controls (94.98,173.13) and (94.47,173.65) .. (93.83,173.65) .. controls (93.2,173.65) and (92.69,173.13) .. (92.69,172.5) -- cycle ;
\draw  [color={rgb, 255:red, 0; green, 0; blue, 0 }  ,draw opacity=1 ][fill={rgb, 255:red, 0; green, 0; blue, 0 }  ,fill opacity=1 ] (114.69,172.5) .. controls (114.69,171.87) and (115.2,171.35) .. (115.83,171.35) .. controls (116.47,171.35) and (116.98,171.87) .. (116.98,172.5) .. controls (116.98,173.13) and (116.47,173.65) .. (115.83,173.65) .. controls (115.2,173.65) and (114.69,173.13) .. (114.69,172.5) -- cycle ;
\draw    (71.83,172.5) -- (115.83,172.5) ;
\draw    (161.83,81.5) -- (183.83,81.5) ;
\draw  [color={rgb, 255:red, 0; green, 0; blue, 0 }  ,draw opacity=1 ][fill={rgb, 255:red, 0; green, 0; blue, 0 }  ,fill opacity=1 ] (182.69,81.5) .. controls (182.69,80.87) and (183.2,80.35) .. (183.83,80.35) .. controls (184.47,80.35) and (184.98,80.87) .. (184.98,81.5) .. controls (184.98,82.13) and (184.47,82.65) .. (183.83,82.65) .. controls (183.2,82.65) and (182.69,82.13) .. (182.69,81.5) -- cycle ;
\draw    (183.83,81.5) -- (202.75,59.98) ;
\draw  [color={rgb, 255:red, 0; green, 0; blue, 0 }  ,draw opacity=1 ][fill={rgb, 255:red, 255; green, 255; blue, 255 }  ,fill opacity=1 ] (159.02,81.5) .. controls (159.02,80.04) and (160.2,78.85) .. (161.67,78.85) .. controls (163.13,78.85) and (164.32,80.04) .. (164.32,81.5) .. controls (164.32,82.96) and (163.13,84.15) .. (161.67,84.15) .. controls (160.2,84.15) and (159.02,82.96) .. (159.02,81.5) -- cycle ;
\draw  [color={rgb, 255:red, 0; green, 0; blue, 0 }  ,draw opacity=1 ][fill={rgb, 255:red, 0; green, 0; blue, 0 }  ,fill opacity=1 ] (70.69,172.5) .. controls (70.69,171.87) and (71.2,171.35) .. (71.83,171.35) .. controls (72.47,171.35) and (72.98,171.87) .. (72.98,172.5) .. controls (72.98,173.13) and (72.47,173.65) .. (71.83,173.65) .. controls (71.2,173.65) and (70.69,173.13) .. (70.69,172.5) -- cycle ;
\draw  [color={rgb, 255:red, 0; green, 0; blue, 0 }  ,draw opacity=1 ][fill={rgb, 255:red, 0; green, 0; blue, 0 }  ,fill opacity=1 ] (181.15,161.92) .. controls (181.15,161.29) and (181.66,160.78) .. (182.29,160.78) .. controls (182.92,160.78) and (183.44,161.29) .. (183.44,161.92) .. controls (183.44,162.55) and (182.92,163.07) .. (182.29,163.07) .. controls (181.66,163.07) and (181.15,162.55) .. (181.15,161.92) -- cycle ;
\draw  [color={rgb, 255:red, 0; green, 0; blue, 0 }  ,draw opacity=1 ][fill={rgb, 255:red, 0; green, 0; blue, 0 }  ,fill opacity=1 ] (201.6,151.16) .. controls (201.6,150.53) and (202.12,150.02) .. (202.75,150.02) .. controls (203.38,150.02) and (203.9,150.53) .. (203.9,151.16) .. controls (203.9,151.79) and (203.38,152.31) .. (202.75,152.31) .. controls (202.12,152.31) and (201.6,151.79) .. (201.6,151.16) -- cycle ;
\draw  [color={rgb, 255:red, 0; green, 0; blue, 0 }  ,draw opacity=1 ][fill={rgb, 255:red, 0; green, 0; blue, 0 }  ,fill opacity=1 ] (182.69,172.68) .. controls (182.69,172.05) and (183.2,171.54) .. (183.83,171.54) .. controls (184.47,171.54) and (184.98,172.05) .. (184.98,172.68) .. controls (184.98,173.31) and (184.47,173.83) .. (183.83,173.83) .. controls (183.2,173.83) and (182.69,173.31) .. (182.69,172.68) -- cycle ;
\draw  [color={rgb, 255:red, 0; green, 0; blue, 0 }  ,draw opacity=1 ][fill={rgb, 255:red, 0; green, 0; blue, 0 }  ,fill opacity=1 ] (160.69,172.68) .. controls (160.69,172.05) and (161.2,171.54) .. (161.83,171.54) .. controls (162.47,171.54) and (162.98,172.05) .. (162.98,172.68) .. controls (162.98,173.31) and (162.47,173.83) .. (161.83,173.83) .. controls (161.2,173.83) and (160.69,173.31) .. (160.69,172.68) -- cycle ;
\draw  [fill={rgb, 255:red, 0; green, 0; blue, 0 }  ,fill opacity=0.4 ] (202.75,151.16) -- (183.83,172.68) -- (161.83,172.68) -- cycle ;
\draw  [color={rgb, 255:red, 0; green, 0; blue, 0 }  ,draw opacity=1 ][fill={rgb, 255:red, 0; green, 0; blue, 0 }  ,fill opacity=1 ] (250.69,172.5) .. controls (250.69,171.87) and (251.2,171.35) .. (251.83,171.35) .. controls (252.47,171.35) and (252.98,171.87) .. (252.98,172.5) .. controls (252.98,173.13) and (252.47,173.65) .. (251.83,173.65) .. controls (251.2,173.65) and (250.69,173.13) .. (250.69,172.5) -- cycle ;
\draw  [color={rgb, 255:red, 0; green, 0; blue, 0 }  ,draw opacity=1 ][fill={rgb, 255:red, 0; green, 0; blue, 0 }  ,fill opacity=1 ] (272.69,172.5) .. controls (272.69,171.87) and (273.2,171.35) .. (273.83,171.35) .. controls (274.47,171.35) and (274.98,171.87) .. (274.98,172.5) .. controls (274.98,173.13) and (274.47,173.65) .. (273.83,173.65) .. controls (273.2,173.65) and (272.69,173.13) .. (272.69,172.5) -- cycle ;
\draw  [color={rgb, 255:red, 0; green, 0; blue, 0 }  ,draw opacity=1 ][fill={rgb, 255:red, 0; green, 0; blue, 0 }  ,fill opacity=1 ] (279.52,157.21) .. controls (279.52,156.58) and (280.03,156.06) .. (280.67,156.06) .. controls (281.3,156.06) and (281.81,156.58) .. (281.81,157.21) .. controls (281.81,157.84) and (281.3,158.35) .. (280.67,158.35) .. controls (280.03,158.35) and (279.52,157.84) .. (279.52,157.21) -- cycle ;
\draw  [fill={rgb, 255:red, 0; green, 0; blue, 0 }  ,fill opacity=0.4 ] (280.67,157.21) -- (273.83,172.5) -- (251.83,172.5) -- cycle ;
\draw  [color={rgb, 255:red, 0; green, 0; blue, 0 }  ,draw opacity=1 ][fill={rgb, 255:red, 0; green, 0; blue, 0 }  ,fill opacity=1 ] (92.69,240.5) .. controls (92.69,239.87) and (93.2,239.35) .. (93.83,239.35) .. controls (94.47,239.35) and (94.98,239.87) .. (94.98,240.5) .. controls (94.98,241.13) and (94.47,241.65) .. (93.83,241.65) .. controls (93.2,241.65) and (92.69,241.13) .. (92.69,240.5) -- cycle ;
\draw  [color={rgb, 255:red, 0; green, 0; blue, 0 }  ,draw opacity=1 ][fill={rgb, 255:red, 0; green, 0; blue, 0 }  ,fill opacity=1 ] (114.69,240.5) .. controls (114.69,239.87) and (115.2,239.35) .. (115.83,239.35) .. controls (116.47,239.35) and (116.98,239.87) .. (116.98,240.5) .. controls (116.98,241.13) and (116.47,241.65) .. (115.83,241.65) .. controls (115.2,241.65) and (114.69,241.13) .. (114.69,240.5) -- cycle ;
\draw  [color={rgb, 255:red, 0; green, 0; blue, 0 }  ,draw opacity=1 ][fill={rgb, 255:red, 0; green, 0; blue, 0 }  ,fill opacity=1 ] (272.69,240.5) .. controls (272.69,239.87) and (273.2,239.35) .. (273.83,239.35) .. controls (274.47,239.35) and (274.98,239.87) .. (274.98,240.5) .. controls (274.98,241.13) and (274.47,241.65) .. (273.83,241.65) .. controls (273.2,241.65) and (272.69,241.13) .. (272.69,240.5) -- cycle ;
\draw  [color={rgb, 255:red, 0; green, 0; blue, 0 }  ,draw opacity=1 ][fill={rgb, 255:red, 0; green, 0; blue, 0 }  ,fill opacity=1 ] (250.52,218.71) .. controls (250.52,218.08) and (251.03,217.56) .. (251.67,217.56) .. controls (252.3,217.56) and (252.81,218.08) .. (252.81,218.71) .. controls (252.81,219.34) and (252.3,219.85) .. (251.67,219.85) .. controls (251.03,219.85) and (250.52,219.34) .. (250.52,218.71) -- cycle ;
\draw    (71.83,240.5) -- (115.83,240.5) ;
\draw    (161.83,240.5) -- (183.83,240.5) ;
\draw  [color={rgb, 255:red, 0; green, 0; blue, 0 }  ,draw opacity=1 ][fill={rgb, 255:red, 0; green, 0; blue, 0 }  ,fill opacity=1 ] (182.69,240.5) .. controls (182.69,239.87) and (183.2,239.35) .. (183.83,239.35) .. controls (184.47,239.35) and (184.98,239.87) .. (184.98,240.5) .. controls (184.98,241.13) and (184.47,241.65) .. (183.83,241.65) .. controls (183.2,241.65) and (182.69,241.13) .. (182.69,240.5) -- cycle ;
\draw  [color={rgb, 255:red, 0; green, 0; blue, 0 }  ,draw opacity=1 ][fill={rgb, 255:red, 0; green, 0; blue, 0 }  ,fill opacity=1 ] (250.52,240.5) .. controls (250.52,239.87) and (251.03,239.35) .. (251.67,239.35) .. controls (252.3,239.35) and (252.81,239.87) .. (252.81,240.5) .. controls (252.81,241.13) and (252.3,241.65) .. (251.67,241.65) .. controls (251.03,241.65) and (250.52,241.13) .. (250.52,240.5) -- cycle ;
\draw  [color={rgb, 255:red, 0; green, 0; blue, 0 }  ,draw opacity=1 ][fill={rgb, 255:red, 0; green, 0; blue, 0 }  ,fill opacity=1 ] (160.69,240.5) .. controls (160.69,239.87) and (161.2,239.35) .. (161.83,239.35) .. controls (162.47,239.35) and (162.98,239.87) .. (162.98,240.5) .. controls (162.98,241.13) and (162.47,241.65) .. (161.83,241.65) .. controls (161.2,241.65) and (160.69,241.13) .. (160.69,240.5) -- cycle ;
\draw  [color={rgb, 255:red, 0; green, 0; blue, 0 }  ,draw opacity=1 ][fill={rgb, 255:red, 0; green, 0; blue, 0 }  ,fill opacity=1 ] (70.69,240.5) .. controls (70.69,239.87) and (71.2,239.35) .. (71.83,239.35) .. controls (72.47,239.35) and (72.98,239.87) .. (72.98,240.5) .. controls (72.98,241.13) and (72.47,241.65) .. (71.83,241.65) .. controls (71.2,241.65) and (70.69,241.13) .. (70.69,240.5) -- cycle ;
\draw    (479.83,172.5) -- (520.75,150.98) ;
\draw  [color={rgb, 255:red, 0; green, 0; blue, 0 }  ,draw opacity=1 ][fill={rgb, 255:red, 0; green, 0; blue, 0 }  ,fill opacity=1 ] (499.15,161.74) .. controls (499.15,161.11) and (499.66,160.59) .. (500.29,160.59) .. controls (500.92,160.59) and (501.44,161.11) .. (501.44,161.74) .. controls (501.44,162.37) and (500.92,162.89) .. (500.29,162.89) .. controls (499.66,162.89) and (499.15,162.37) .. (499.15,161.74) -- cycle ;
\draw  [color={rgb, 255:red, 0; green, 0; blue, 0 }  ,draw opacity=1 ][fill={rgb, 255:red, 0; green, 0; blue, 0 }  ,fill opacity=1 ] (519.6,150.98) .. controls (519.6,150.35) and (520.12,149.83) .. (520.75,149.83) .. controls (521.38,149.83) and (521.9,150.35) .. (521.9,150.98) .. controls (521.9,151.61) and (521.38,152.12) .. (520.75,152.12) .. controls (520.12,152.12) and (519.6,151.61) .. (519.6,150.98) -- cycle ;
\draw  [color={rgb, 255:red, 0; green, 0; blue, 0 }  ,draw opacity=1 ][fill={rgb, 255:red, 0; green, 0; blue, 0 }  ,fill opacity=1 ] (568.69,150.71) .. controls (568.69,150.08) and (569.2,149.56) .. (569.83,149.56) .. controls (570.47,149.56) and (570.98,150.08) .. (570.98,150.71) .. controls (570.98,151.34) and (570.47,151.85) .. (569.83,151.85) .. controls (569.2,151.85) and (568.69,151.34) .. (568.69,150.71) -- cycle ;
\draw  [color={rgb, 255:red, 0; green, 0; blue, 0 }  ,draw opacity=1 ][fill={rgb, 255:red, 0; green, 0; blue, 0 }  ,fill opacity=1 ] (589.85,161.34) .. controls (589.85,160.71) and (590.36,160.19) .. (590.99,160.19) .. controls (591.63,160.19) and (592.14,160.71) .. (592.14,161.34) .. controls (592.14,161.97) and (591.63,162.48) .. (590.99,162.48) .. controls (590.36,162.48) and (589.85,161.97) .. (589.85,161.34) -- cycle ;
\draw  [color={rgb, 255:red, 0; green, 0; blue, 0 }  ,draw opacity=1 ][fill={rgb, 255:red, 255; green, 255; blue, 255 }  ,fill opacity=1 ] (387.02,172.5) .. controls (387.02,171.04) and (388.2,169.85) .. (389.67,169.85) .. controls (391.13,169.85) and (392.32,171.04) .. (392.32,172.5) .. controls (392.32,173.96) and (391.13,175.15) .. (389.67,175.15) .. controls (388.2,175.15) and (387.02,173.96) .. (387.02,172.5) -- cycle ;
\draw  [color={rgb, 255:red, 255; green, 255; blue, 255 }  ,draw opacity=1 ][fill={rgb, 255:red, 255; green, 255; blue, 255 }  ,fill opacity=1 ] (581.85,164.85) .. controls (581.85,163.72) and (582.77,162.81) .. (583.9,162.81) .. controls (585.03,162.81) and (585.95,163.72) .. (585.95,164.85) .. controls (585.95,165.99) and (585.03,166.9) .. (583.9,166.9) .. controls (582.77,166.9) and (581.85,165.99) .. (581.85,164.85) -- cycle ;
\draw  [color={rgb, 255:red, 0; green, 0; blue, 0 }  ,draw opacity=1 ][fill={rgb, 255:red, 0; green, 0; blue, 0 }  ,fill opacity=1 ] (478.69,172.5) .. controls (478.69,171.87) and (479.2,171.35) .. (479.83,171.35) .. controls (480.47,171.35) and (480.98,171.87) .. (480.98,172.5) .. controls (480.98,173.13) and (480.47,173.65) .. (479.83,173.65) .. controls (479.2,173.65) and (478.69,173.13) .. (478.69,172.5) -- cycle ;
\draw  [color={rgb, 255:red, 0; green, 0; blue, 0 }  ,draw opacity=1 ][fill={rgb, 255:red, 0; green, 0; blue, 0 }  ,fill opacity=1 ] (568.69,172.5) .. controls (568.69,171.87) and (569.2,171.35) .. (569.83,171.35) .. controls (570.47,171.35) and (570.98,171.87) .. (570.98,172.5) .. controls (570.98,173.13) and (570.47,173.65) .. (569.83,173.65) .. controls (569.2,173.65) and (568.69,173.13) .. (568.69,172.5) -- cycle ;
\draw  [fill={rgb, 255:red, 0; green, 0; blue, 0 }  ,fill opacity=0.4 ] (569.83,150.71) -- (590.99,161.34) -- (569.67,172.5) -- cycle ;
\draw  [color={rgb, 255:red, 243; green, 161; blue, 25 }  ,draw opacity=0.8 ] (387.11,186.78) .. controls (387.11,191.45) and (389.44,193.78) .. (394.11,193.78) -- (440.13,193.78) .. controls (446.8,193.78) and (450.13,196.11) .. (450.13,200.78) .. controls (450.13,196.11) and (453.46,193.78) .. (460.13,193.78)(457.13,193.78) -- (518.78,193.78) .. controls (523.45,193.78) and (525.78,191.45) .. (525.78,186.78) ;
\draw  [color={rgb, 255:red, 243; green, 161; blue, 25 }  ,draw opacity=0.8 ] (595.33,139.08) .. controls (595.32,134.41) and (592.99,132.08) .. (588.32,132.09) -- (498.28,132.22) .. controls (491.61,132.23) and (488.28,129.9) .. (488.27,125.23) .. controls (488.28,129.9) and (484.95,132.23) .. (478.28,132.24)(481.28,132.24) -- (394.04,132.36) .. controls (389.37,132.37) and (387.04,134.7) .. (387.05,139.37) ;
\draw  [color={rgb, 255:red, 243; green, 161; blue, 25 }  ,draw opacity=0.8 ] (395.22,165.28) .. controls (395.25,163.56) and (394.41,162.68) .. (392.7,162.65) -- (392.7,162.65) .. controls (390.25,162.6) and (389.04,161.72) .. (389.07,160.01) .. controls (389.04,161.72) and (387.79,162.56) .. (385.34,162.51)(386.44,162.53) -- (385.34,162.51) .. controls (383.62,162.48) and (382.74,163.32) .. (382.71,165.03) ;
\draw  [color={rgb, 255:red, 243; green, 161; blue, 25 }  ,draw opacity=0.8 ] (188.83,230.31) .. controls (188.81,225.64) and (186.47,223.32) .. (181.8,223.34) -- (136.96,223.54) .. controls (130.29,223.57) and (126.95,221.26) .. (126.93,216.59) .. controls (126.95,221.26) and (123.63,223.6) .. (116.97,223.63)(119.97,223.62) -- (75.02,223.82) .. controls (70.35,223.85) and (68.03,226.19) .. (68.05,230.86) ;
\draw  [color={rgb, 255:red, 243; green, 161; blue, 25 }  ,draw opacity=0.8 ] (285.83,139.97) .. controls (285.82,135.31) and (283.49,132.98) .. (278.82,132.99) -- (183.86,133.24) .. controls (177.19,133.26) and (173.86,130.94) .. (173.85,126.27) .. controls (173.86,130.94) and (170.53,133.28) .. (163.86,133.29)(166.86,133.29) -- (75.03,133.53) .. controls (70.36,133.54) and (68.04,135.88) .. (68.05,140.55) ;
\draw  [color={rgb, 255:red, 255; green, 255; blue, 255 }  ,draw opacity=1 ][fill={rgb, 255:red, 255; green, 255; blue, 255 }  ,fill opacity=1 ] (581.85,234.85) .. controls (581.85,233.72) and (582.77,232.81) .. (583.9,232.81) .. controls (585.03,232.81) and (585.95,233.72) .. (585.95,234.85) .. controls (585.95,235.99) and (585.03,236.9) .. (583.9,236.9) .. controls (582.77,236.9) and (581.85,235.99) .. (581.85,234.85) -- cycle ;
\draw  [color={rgb, 255:red, 0; green, 0; blue, 0 }  ,draw opacity=1 ][fill={rgb, 255:red, 0; green, 0; blue, 0 }  ,fill opacity=1 ] (478.69,242.5) .. controls (478.69,241.87) and (479.2,241.35) .. (479.83,241.35) .. controls (480.47,241.35) and (480.98,241.87) .. (480.98,242.5) .. controls (480.98,243.13) and (480.47,243.65) .. (479.83,243.65) .. controls (479.2,243.65) and (478.69,243.13) .. (478.69,242.5) -- cycle ;
\draw  [color={rgb, 255:red, 0; green, 0; blue, 0 }  ,draw opacity=1 ][fill={rgb, 255:red, 0; green, 0; blue, 0 }  ,fill opacity=1 ] (568.69,242.5) .. controls (568.69,241.87) and (569.2,241.35) .. (569.83,241.35) .. controls (570.47,241.35) and (570.98,241.87) .. (570.98,242.5) .. controls (570.98,243.13) and (570.47,243.65) .. (569.83,243.65) .. controls (569.2,243.65) and (568.69,243.13) .. (568.69,242.5) -- cycle ;
\draw  [color={rgb, 255:red, 243; green, 161; blue, 25 }  ,draw opacity=0.8 ] (387.11,253.17) .. controls (387.11,257.84) and (389.44,260.17) .. (394.11,260.17) -- (484.01,260.17) .. controls (490.68,260.17) and (494.01,262.5) .. (494.01,267.17) .. controls (494.01,262.5) and (497.34,260.17) .. (504.01,260.17)(501.01,260.17) -- (588.11,260.17) .. controls (592.78,260.17) and (595.11,257.84) .. (595.11,253.17) ;
\draw  [color={rgb, 255:red, 0; green, 0; blue, 0 }  ,draw opacity=1 ][fill={rgb, 255:red, 0; green, 0; blue, 0 }  ,fill opacity=1 ] (410.69,242.5) .. controls (410.69,241.87) and (411.2,241.35) .. (411.83,241.35) .. controls (412.47,241.35) and (412.98,241.87) .. (412.98,242.5) .. controls (412.98,243.13) and (412.47,243.65) .. (411.83,243.65) .. controls (411.2,243.65) and (410.69,243.13) .. (410.69,242.5) -- cycle ;
\draw  [color={rgb, 255:red, 0; green, 0; blue, 0 }  ,draw opacity=1 ][fill={rgb, 255:red, 0; green, 0; blue, 0 }  ,fill opacity=1 ] (432.69,242.5) .. controls (432.69,241.87) and (433.2,241.35) .. (433.83,241.35) .. controls (434.47,241.35) and (434.98,241.87) .. (434.98,242.5) .. controls (434.98,243.13) and (434.47,243.65) .. (433.83,243.65) .. controls (433.2,243.65) and (432.69,243.13) .. (432.69,242.5) -- cycle ;
\draw  [color={rgb, 255:red, 0; green, 0; blue, 0 }  ,draw opacity=1 ][fill={rgb, 255:red, 0; green, 0; blue, 0 }  ,fill opacity=1 ] (590.69,242.5) .. controls (590.69,241.87) and (591.2,241.35) .. (591.83,241.35) .. controls (592.47,241.35) and (592.98,241.87) .. (592.98,242.5) .. controls (592.98,243.13) and (592.47,243.65) .. (591.83,243.65) .. controls (591.2,243.65) and (590.69,243.13) .. (590.69,242.5) -- cycle ;
\draw    (389.83,242.5) -- (433.83,242.5) ;
\draw    (479.83,242.5) -- (501.83,242.5) ;
\draw  [color={rgb, 255:red, 0; green, 0; blue, 0 }  ,draw opacity=1 ][fill={rgb, 255:red, 0; green, 0; blue, 0 }  ,fill opacity=1 ] (500.69,242.5) .. controls (500.69,241.87) and (501.2,241.35) .. (501.83,241.35) .. controls (502.47,241.35) and (502.98,241.87) .. (502.98,242.5) .. controls (502.98,243.13) and (502.47,243.65) .. (501.83,243.65) .. controls (501.2,243.65) and (500.69,243.13) .. (500.69,242.5) -- cycle ;
\draw  [color={rgb, 255:red, 0; green, 0; blue, 0 }  ,draw opacity=1 ][fill={rgb, 255:red, 0; green, 0; blue, 0 }  ,fill opacity=1 ] (568.52,242.5) .. controls (568.52,241.87) and (569.03,241.35) .. (569.67,241.35) .. controls (570.3,241.35) and (570.81,241.87) .. (570.81,242.5) .. controls (570.81,243.13) and (570.3,243.65) .. (569.67,243.65) .. controls (569.03,243.65) and (568.52,243.13) .. (568.52,242.5) -- cycle ;
\draw  [color={rgb, 255:red, 0; green, 0; blue, 0 }  ,draw opacity=1 ][fill={rgb, 255:red, 0; green, 0; blue, 0 }  ,fill opacity=1 ] (478.69,242.5) .. controls (478.69,241.87) and (479.2,241.35) .. (479.83,241.35) .. controls (480.47,241.35) and (480.98,241.87) .. (480.98,242.5) .. controls (480.98,243.13) and (480.47,243.65) .. (479.83,243.65) .. controls (479.2,243.65) and (478.69,243.13) .. (478.69,242.5) -- cycle ;
\draw  [color={rgb, 255:red, 0; green, 0; blue, 0 }  ,draw opacity=1 ][fill={rgb, 255:red, 0; green, 0; blue, 0 }  ,fill opacity=1 ] (388.69,242.5) .. controls (388.69,241.87) and (389.2,241.35) .. (389.83,241.35) .. controls (390.47,241.35) and (390.98,241.87) .. (390.98,242.5) .. controls (390.98,243.13) and (390.47,243.65) .. (389.83,243.65) .. controls (389.2,243.65) and (388.69,243.13) .. (388.69,242.5) -- cycle ;
\draw    (569.83,242.5) -- (591.83,242.5) ;

\draw (115.83,86.05) node [anchor=north] [inner sep=0.75pt]  [font=\fontsize{0.35em}{0.42em}\selectfont]  {$( 2,0,0)$};
\draw (200.75,55.43) node [anchor=south east] [inner sep=0.75pt]  [font=\fontsize{0.35em}{0.42em}\selectfont]  {$( 0,2,0)$};
\draw (276.14,70.34) node [anchor=west] [inner sep=0.75pt]  [font=\fontsize{0.35em}{0.42em}\selectfont]  {$( 0,1,0)$};
\draw (273.83,86.05) node [anchor=north] [inner sep=0.75pt]  [font=\fontsize{0.35em}{0.42em}\selectfont]  {$( 1,0,0)$};
\draw (253.67,48.1) node [anchor=south west] [inner sep=0.75pt]  [font=\fontsize{0.35em}{0.42em}\selectfont]  {$( 0,0,1)$};
\draw (90,14.07) node [anchor=north west][inner sep=0.75pt]  [font=\scriptsize]  {$\Delta _{1}$};
\draw (180,14.07) node [anchor=north west][inner sep=0.75pt]  [font=\scriptsize]  {$\Delta _{2}$};
\draw (270,14.07) node [anchor=north west][inner sep=0.75pt]  [font=\scriptsize]  {$\Delta _{3}$};
\draw (183.83,86.05) node [anchor=north] [inner sep=0.75pt]  [font=\fontsize{0.35em}{0.42em}\selectfont]  {$( 1,0,0)$};
\draw (20.67,167.32) node [anchor=north west][inner sep=0.75pt]  [font=\scriptsize] [align=left] {Bottom};
\draw (21.33,234.99) node [anchor=north west][inner sep=0.75pt]  [font=\scriptsize] [align=left] {Front};
\draw (340.67,168.32) node [anchor=north west][inner sep=0.75pt]  [font=\scriptsize] [align=left] {Side};
\draw (311.67,238.32) node [anchor=north west][inner sep=0.75pt]  [font=\scriptsize] [align=left] {Front-bottom};
\draw (492.99,272.2) node [anchor=north] [inner sep=0.75pt]  [font=\tiny]  {$1-3=-2$};
\draw (446.49,206.53) node [anchor=north] [inner sep=0.75pt]  [font=\tiny]  {$1-2=-1$};
\draw (390.99,151.53) node [anchor=north] [inner sep=0.75pt]  [font=\fontsize{0.35em}{0.42em}\selectfont]  {$0-1=-1$};
\draw (484.49,111.53) node [anchor=north] [inner sep=0.75pt]  [font=\tiny]  {$2-3=-1$};
\draw (168.66,115.53) node [anchor=north] [inner sep=0.75pt]  [font=\tiny]  {$2-3=-1$};
\draw (122.16,205.53) node [anchor=north] [inner sep=0.75pt]  [font=\tiny]  {$1-2=-1$};

\end{tikzpicture}

\caption{An example of a triple $\bdel$. For each of the four selected faces of $\bdel$, we indicate which are the important facings, and we compute their dimensions according to the formula~\eqref{eq:dimension}.}\label{fig:main}
\end{figure}
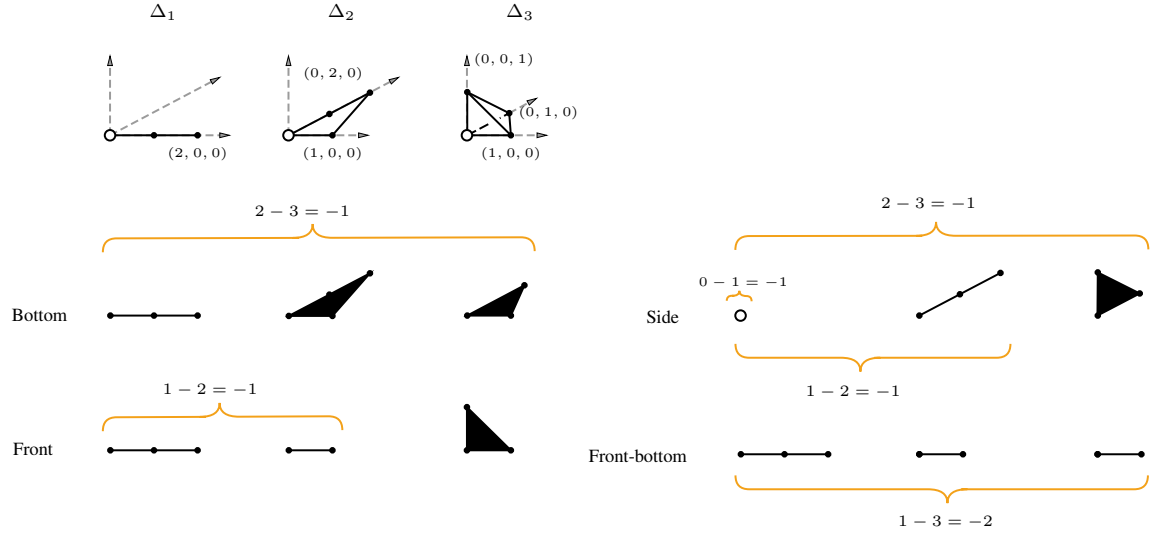

For any polytope $\Pi\subset\Rnn$, either $\{\unze\}$ is a vertex of $\Pi$, or $\unze\not\in \Pi$.

\begin{definition}\label{def:dicritical-facings}
A tuple $\bdel$ is said to be \emph{origin} if its support $[\bdel]$ contains the index $0$ and the polytope $\Delta_0$ contains the origin $\{\unze\}$.
\end{definition}
For example, all important facings illustrated in Figure~\ref{fig:main} are origin.

\subsection{Application to polynomial functions}
\label{sub:appl}
Let $\K \in \set{\C, \R}$. We consider the polynomial function
 $f:X\longrightarrow\K$, for some $X:=\VK(g_1,\ldots,g_r) = \VK(\g)$. If $f$ is obtained as the restriction $\Fr_X = f$, for some polynomial $F:\K^n\longrightarrow\K$, then, we identify $f$ with the collection of polynomials $\f:=(F,g_1,\ldots,g_r) = (F, \g)$. 

Let $\bdel$ denote the tuple of lattice polytopes given by $(\Delta_0,\Delta_1,\ldots,\Delta_r)$, where 
\[
\Delta_i:=\NP(g_i),~i=1,\ldots,r,
\] and $\Delta_0:=\NP(F-t)$ for some generic $t\in\K$. That is, it holds that
\[
\Delta_0:=\conv(\supp(F) \cup\{\unze\}).
\]
\begin{notation}\label{not:restricted}
For any subset $\sigma\subset\R^n$ and for any polynomial $P\in\K[x_1,\ldots,x_n]$ the \emph{restriction of $P$ to $\sigma$}, denoted by $P_{\sigma}$, is the polynomial 
\[
\sum_{\bma\in\sigma\cap\supp(P)}c_{\bma}~\x^{\bma}.
\]
\end{notation}

For any facing $\bgam\prec\bdel$, and any $\bmf\in\K^{\bdel}$, 
we define the collection of polynomials $\bmg_{\bgam}:=(\bmg_{\Gamma_i})_{i\in[\bgam]\setminus\{0\}}$,
where for any $i \in [\bgam]\setminus\{0\}$, the expression $\bmg_{\Gamma_i}$ denotes the restricted polynomial $g_{i,\Gamma_i}$. Similarly, we use the notation $\bmf_{\bgam}$ to refer to the tuple $(\bmf_{\Gamma_i})_{i\in[\bgam]}$ which includes $F_{\Gamma_0}$ whenever $0\in [\Gamma]$.

Let $\bgam$ be an origin facing for which $\VVKT(\bmg_{\bgam})$ is non-empty. Then it defines a function
\[
f_{\Gamma}:=\left.F_{\Gamma_0}\right|_{\VsK(\bmg_{\bgam})}:\VsK(\bmg_{\bgam})\longrightarrow \K,
\] where for any subset $S\subset\K^n$, we use the notation $S^*$ 
to denote the intersection $S\cap\TTnK$. 
We use this notation to define the set $\Rgf\subset\K$, called the \emph{face-discriminant} of $f$, defined as
%
\begin{equation}
\label{eq:face-discriminant}
\Rgf: =  f_\Gamma(\Crit(f_\Gamma))
\end{equation} 

In other words, the locus $\Rgf$ is the discriminant of the function $f_{\Gamma}$. By Bertini Theorem~\cite{Jean-Pierre_Jouanolou}, face-resultants are finite sets in $\K$. 
%

\begin{example}
Consider the polynomial function $F:\R^3\longrightarrow\R$, $(x,y,z)\longmapsto 1+ x + x^2$, and let $X\subset\R^3$ be the curve $\VR(g_1,~g_2)$, where $g_1:=  -2 + x +2y - y^2  $ and $g_2:=  1+2x - 3y + 4 z$. The triple $\bmf:=(F,g_1,g_2)$ has Newton tuple $\bdel$ illustrated in Figure~\ref{fig:main}. Let us compute some of the face-discriminants of $f$. 

If $\bgam=\bdel$, then we have $\Rgf = F(\Crit \Fr_{X^*})=\VR(h)$, where $h=\langle F-t,~g_1,~g_2,~\det\Jac_{(x,y,z)}\bmf\rangle$. 
 Another trivial case is whenever $\bgam$ is any one of the important facings of the ``Side'' face of $\bdel$ (see Figure~\ref{fig:main}); since $\Gamma_0 = \{(0,0,0)\}$, we always get $F_{\Gamma_0} \equiv 1$, and thus $\Rgf = \{1\}$.

Now let $\bgam\prec\bdel$ be the important origin face that is the ``Bottom'' triple of Figure~\ref{fig:main}. Hence, we have $F_{\Gamma_0}=F_{\Delta_0}=F$. Then, the domain of $\bmf_{\bgam}$ is the union of two veritcal lines $\VR(g_1,~1+2x - 3y)$. A generic $t\in \R$, has no preimages under $\bmf_{\bgam}$. This shows that 
\[
\Rgf = F(\VR(g,1+2x - 3y)) =\{1,~ 205/16\}. 
\]

\end{example}

\begin{remark}\label{rem:functional-case}
Note that, if $X=\K^n$, then we set $[\bdel] = \{0\}$. Hence, every facing $\bgam\prec\bdel$ corresponds to the first polytope $\Gamma_0$ that is a face of $\Delta_0$. By definition, we get 
\[
\Rgf = F_{\Gamma_0}(\Crit F_{\Gamma_0}).
\]
\end{remark}

\begin{notation}\label{not:impor-origin}
Let $\Ibdel$ and $\Obdel$ denote the set of all important facings of $\bdel$ and origin ones respectively. The intersection of the above two sets is denoted by $\IObdel$. 
\end{notation}

  We have the following classical result.

\begin{theorem}[\cite{NZ90}]\label{thm:Zaharia-Nemethi}
Let $\bdel$ be a tuple consisting of a single integer polytope in $\R^n_{\geq 0}$. Then, 
there exists a Zariski open subset $\Upsilon\subset\C^{\bdel}$, such that for every polynomial $f\in \Upsilon$ it holds that
\[
\cBf\setminus \{f(\unze)\}\subset\bigcup_{\bgam\in\IObdel}\Rgf.
\] 
\end{theorem}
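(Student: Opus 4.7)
The plan is to define $\Upsilon \subset \C^{\bdel}$ as the classical Kouchnirenko--Khovanskii Newton non-degeneracy locus: the set of $f$ such that, for every face $\Gamma \prec \Delta_0$, the system $f_\Gamma, x_1\partial_{x_1} f_\Gamma, \dots, x_n\partial_{x_n} f_\Gamma$ has no common zero in the torus $(\C^*)^n$. Since each face contributes a proper algebraic condition on the coefficients and $\Delta_0$ has only finitely many faces, a standard parameter-space/Bertini transversality argument shows that $\Upsilon$ is Zariski open and non-empty.

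Fix $f\in\Upsilon$ and $z\in \cBf\setminus\{f(\unze)\}$, and split $\cBf = \cBfz\cup\cBfi$. If $z\in\cBfz$, write $z = f(\x_0)$ for a critical point; Newton non-degeneracy at the maximal face $\Gamma=\Delta_0$ forces $\x_0\in(\C^*)^n$, so $z\in \Rgf$ for $\bgam := \bdel$. This $\bgam$ is origin because $\unze\in\Delta_0$ holds by the convention $\Delta_0 = \conv(\supp F\cup\{\unze\})$, and is important in the functional case essentially vacuously: with $[\bdel]=\{0\}$, the only index set $I\supset[\bgam]=\{0\}$ is $I=\{0\}$ itself, so the defining inequality in Definition~\ref{def:important_facing} is trivial (see also Remark~\ref{rem:functional-case}).

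For the harder case $z\in\cBfi$, I would use the Rabier characterization~\eqref{eq:bifur-Rabier} to produce a sequence $\{\x_\ell\}\subset\C^n$ with $\|\x_\ell\|\to\infty$, $\|\x_\ell\|\cdot\|\grad f(\x_\ell)\|\to 0$, and $f(\x_\ell)\to z$. After passing to a subsequence, normalize so that the logarithmic direction $\log|\x_\ell|/\|\log|\x_\ell|\|$ converges to $\bm{v}\in\R^n$, and let $\Gamma\prec\Delta_0$ be the face on which $\bma\mapsto\langle\bma,\bm{v}\rangle$ attains its maximum over $\Delta_0$. Two observations pin down $\Gamma$. First, since $f(\x_\ell)\to z\in\C$ is finite while $\|\x_\ell\|\to\infty$, the maximum of $\langle\cdot,\bm{v}\rangle$ on $\Delta_0$ must be $0$; together with $\unze\in\Delta_0$ this forces $\unze\in\Gamma$, so $\bgam := (\Gamma)$ is origin. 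Second, $\Gamma\neq\{\unze\}$: otherwise $f_\Gamma\equiv f(\unze)$, which would force $z=f(\unze)$, contradicting our hypothesis. Thus $\bgam\in\IObdel$.

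It remains to prove $z\in\Rgf$. Writing $\x_\ell = \exp(t_\ell\bm{v})\cdot\y_\ell$ with $t_\ell=\|\log|\x_\ell|\|\to\infty$ and $\y_\ell$ in a compact region of $(\C^*)^n$ (after a further subsequence), monomials in $\supp(f)\setminus\Gamma$ have strictly smaller $\bm{v}$-weight, so $f(\x_\ell) = f_\Gamma(\y_\ell)+o(1)$. Rewriting the Rabier condition in terms of the logarithmic gradient $(x_i\partial_{x_i})_i$, which is natural on the torus, forces every logarithmic derivative of $f_\Gamma$ to vanish at a limit point $\y_\infty$ of $\y_\ell$, giving $z = f_\Gamma(\y_\infty)\in\Rgf$ provided $\y_\infty\in(\C^*)^n$. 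The main obstacle, and the reason $\Upsilon$ must impose non-degeneracy on \emph{every} face of $\Delta_0$ and not only on $\Gamma$ itself, is ruling out the escape of $\y_\infty$ to the toric boundary: if $\y_\infty$ were to lie on a proper subface $\Gamma'\prec\Gamma$, Newton non-degeneracy at $\Gamma'$ would yield a contradiction with the vanishing of the logarithmic gradient. Controlling this boundary behavior, and carefully tracking the lower-order contributions of $f-f_\Gamma$ along the Rabier sequence, is the technical heart of the argument.
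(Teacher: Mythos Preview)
The paper does not prove Theorem~\ref{thm:Zaharia-Nemethi} directly: it is quoted as a classical result from~\cite{NZ90}. What the paper does prove is the generalization, Theorem~\ref{thm:face-generic_tuple}, and that proof specialises to the single-polytope case. So the relevant comparison is between your sketch and the paper's proof of Theorem~\ref{thm:face-generic_tuple}.

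The two approaches are genuinely different. You work analytically: take a Rabier sequence for $z\in\cBfi$, extract an asymptotic direction $\bm v$, read off the supporting face $\Gamma$, and use non-degeneracy on all faces to trap a torus limit point in $\Crit f_\Gamma$. This is essentially the original N\'emethi--Zaharia strategy. The paper instead works structurally in the parameter space $\K^{\bdel}$: it identifies $\cB_{\fstar}$ with the intersection of the line $\Lambda(\bmf)$ with the Bertini discriminant $\bm B$, decomposes $\bm B=\bigcup_{\bgam\in\Ibdel} D_{\bgam}$ via~\eqref{eq:Bertini_unions}, uses a fibre-bundle argument to discard non-origin facings, and then passes from the torus to affine space by a recursive application of Lemma~\ref{lem:affine_then_very-affine} over coordinate hyperplanes. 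Your approach is more elementary and self-contained; the paper's buys a uniform treatment of the tuple case and a clean description of the open set $\Omega$ via transversality of $\Lambda(\bmf)$ to $\bm B$ (Verdier's theorem), at the cost of invoking heavier machinery from~\cite{Est13,GKZ94}.

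One concrete gap in your sketch: in the $\cBfz$ case you assert that ``Newton non-degeneracy at the maximal face $\Gamma=\Delta_0$ forces $\x_0\in(\C^*)^n$''. It does not. A critical point $\x_0$ with, say, $x_{0,n}=0$ is not excluded by your non-degeneracy condition (which only constrains torus points, and moreover includes $f_\Gamma=0$ whereas here $f(\x_0)=z$). The fix is the same recursion the paper uses: restrict to the coordinate face $\Gamma=\Delta_0\cap\{a_n=0\}$, observe that $\x_0$ (with $x_n$ replaced by any nonzero value) becomes a torus critical point of $f_\Gamma$ with value $z$, so $z\in\cR_\Gamma(f)$; iterate if more coordinates vanish. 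This is exactly the mechanism of Lemma~\ref{lem:affine_then_very-affine} in the paper's proof, and you will need the analogous step in your $\cBfi$ argument as well when coordinates of the Rabier sequence tend to $0$ rather than to $\infty$.
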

The detailed version of Theorem~\ref{thm:Zaharia-Nemethi} illustrates a description of the set $\Upsilon$, together with all the origin faces of $\Delta$ that do not contribute to the infinity bifurcation set. This demonstrates that face-resultants are useful for effectively computing the bifurcation set of a large family of complex polynomials. 
 
Let us state a generalization of Theorem~\ref{thm:Zaharia-Nemethi}, and postpone its proof to the end of this section.

\begin{theorem}\label{thm:face-generic_tuple}
Let $\bdel$ be a tuple of lattice polytopes in $\Rnn$ satisfying $[\bdel] = \{0,1,\ldots,r\}$, and assume that $\{\unze\}$ is a vertex of $\Delta_0$. Then, there exists a Zariski open subset $\Omega\subset\K^{\bdel}$, such that for every $\bmf\in \Omega$, the set $\VK(g_1,\ldots,g_r)$ is smooth, and the correcponding polynomial function $f:\VK(g_1,\ldots,g_r)\longrightarrow\K$ satisfies
\begin{equation}\label{eq:bifurcation_generic}
\cBf\setminus \{f(\unze)\}\subset \bigcup_{\bgam\in\IObdel} \Rgf .
\end{equation}
%
\end{theorem}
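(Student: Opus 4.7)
The plan is to adapt the strategy underlying Theorem~\ref{thm:Zaharia-Nemethi} to the restricted setting, where $f$ is defined not on $\K^n$ but on the variety $X = \VK(g_1, \dots, g_r)$. We would define $\Omega$ as the intersection of several Zariski open subsets of $\K^{\bdel}$: (i) the locus where $X$ is smooth and a complete intersection of codimension $r$; (ii) for every facing $\bgam \prec \bdel$, the locus where $\VVKT(\bmg_\bgam)$ is smooth of the expected dimension and the restricted map $\bmf_\bgam$ is generic with respect to its fibres (a condition open by a tuple-version of Bertini); and (iii) the locus where $\Rgf$ is a finite set in $\K$ for every $\bgam \in \IObdel$. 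Openness of (iii) follows from classical $A$-discriminant results \cite{GKZ94} and the face-resultant theory of \cite{Est10, Est13}.

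The proof splits according to whether $z \in \cBf \setminus \{f(\unze)\}$ belongs to $\cBfz$ or $\cBfi$. In the critical case $z \in \cBfz$, we pick $\x_0 \in \Crit(\Fr_X)$ with $F(\x_0)=z$ and identify the smallest torus orbit in $\K^n$ containing $\x_0$; this orbit corresponds to an origin facing $\bgam \prec \bdel$ recording which coordinates of $\x_0$ vanish. The critical condition for $\Fr_X$ then restricts to the critical condition defining $\Rgf$ on $\VsK(\bmg_\bgam)$, placing $z$ in $\Rgf$. The hypothesis $z \neq f(\unze)$ rules out $\x_0 = \unze$, and whenever $\bgam$ is not important it can be enlarged to an important facing $\bgam' \supset \bgam$ whose face-discriminant still contains $z$ by the dimension equality in Definition~\ref{def:important_facing}.

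For $z \in \cBfi$, we invoke the Rabier characterization~\eqref{eq:bifur-Rabier} on $X$ together with the Curve Selection Lemma at infinity (as employed in \cite{JK03, JelTib17}) to produce an analytic parametrization $\x(t) = (t^{\nu_1}\psi_1(t), \dots, t^{\nu_n}\psi_n(t))$, with $\psi_i(0) \in \K^*$ and $\min_{i} \nu_i < 0$, such that $\|\x(t)\| \cdot \|\grad(\Fr_X)(\x(t))\| \to 0$ and $F(\x(t)) \to z$ as $t \to 0$. The weight vector $\bm{\nu} = (\nu_1, \dots, \nu_n)$ selects a face of each polytope in $\bdel$, and collecting the proper faces yields a facing $\bgam$ of $\bdel$. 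The finiteness of $z$ together with $z \neq f(\unze)$ forces $\Gamma_0$ to contain $\unze$, so $\bgam$ is origin; by perturbing $\bm{\nu}$ into the relative interior of its defining cone in the normal fan, we may further assume $\bgam \in \IObdel$.

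The main obstacle, and the heart of the argument, is to convert the Rabier condition along $\x(t)$ into the critical-point condition defining $\Rgf$. Substituting the parametrization into each $g_i$ and extracting the leading-order coefficient in $t$ shows that the limit $\bm{\psi}(0) \in \TTnK$ lies in $\VsK(\bmg_\bgam)$. Expanding $\grad(\Fr_X)(\x(t))$ via the chain rule and matching lowest-order terms with the Jacobian of $\bmf_\bgam$, the scaled smallness condition on the gradient forces $\bm{\psi}(0)$ to be a critical point of $f_\bgam$; the non-degeneracy conditions defining $\Omega$ prevent the accidental cancellations that would invalidate this leading-order analysis. Consequently $z = \lim F(\x(t)) = F_{\Gamma_0}(\bm{\psi}(0)) = f_\bgam(\bm{\psi}(0)) \in \Rgf$, and combining both cases yields~\eqref{eq:bifurcation_generic}.
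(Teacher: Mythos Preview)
Your approach is genuinely different from the paper's, and the contrast is instructive. You attempt a direct generalization of the N\'emethi--Zaharia method: analyze each bifurcation value via critical points or an analytic arc at infinity, read off a facing from the weight vector, and verify the face-critical condition by leading-order analysis. The paper instead works entirely in the parameter space $\K^{\bdel}$. It identifies $\cB_{\fstar}$ with the intersection of the Bertini discriminant $\bm{B}\subset\K^{\bdel}$ with the line $\Lambda(\bmf)$ obtained by varying only the constant term of $F$; the decomposition $\bm{B}=\bigcup_{\bgam\in\Ibdel}D_{\bgam}$ is then imported wholesale from \cite{Est13}, and the relation $\Rgf=D_{\bgam}\cap\Lambda(\bmf)$ makes the inclusion into face-discriminants automatic. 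The reduction from $\Ibdel$ to $\IObdel$ is a one-line observation that $\Lambda(\bmf)$ lies in a fiber of the projection forgetting the constant term whenever $\bgam$ is not origin. Finally, the passage from $\cB_{\fstar}$ to $\cBf$ is handled by Lemma~\ref{lem:affine_then_very-affine} and a recursion over coordinate hyperplanes, rather than by your torus-orbit argument for $\cBfz$.

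There is, however, a real gap in your sketch concerning the word \emph{important}. You write that a non-important $\bgam$ ``can be enlarged to an important facing $\bgam'\supset\bgam$ whose face-discriminant still contains $z$'', and separately that perturbing $\bm{\nu}$ into a relative interior lets you assume $\bgam\in\IObdel$. Neither step is justified. Importance (Definition~\ref{def:important_facing}) is a minimality condition on $\dim\bgam$ over enlargements of the index set $[\bgam]$, and it is not a normal-fan genericity condition on the weight $\bm{\nu}$; perturbing $\bm{\nu}$ changes the face but need not produce an important facing, nor is it clear that the critical-point condition and the value $z$ survive such a perturbation. In the paper this reduction is not argued at all at the level of individual values: it comes for free from Esterov's structural theorem $\bm{B}=\bigcup_{\bgam\in\Ibdel}D_{\bgam}$, which already restricts to important facings. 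If you pursue your route, you would need an independent argument---likely reproving a piece of \cite{Est13}---showing that every $D_{\bgam}$ with $\bgam$ origin but not important is contained in some $D_{\bgam'}$ with $\bgam'\in\IObdel$. Without that, your argument at best yields $\cBf\setminus\{f(\unze)\}\subset\bigcup_{\bgam\in\Obdel}\Rgf$, which is weaker than the stated theorem.
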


In what follows, we use $\fstar$ to denote the function
\begin{align*}
\fstar:= \left. f\right|_{X^*}: &~X^*\longrightarrow\K.
\end{align*}

Similarly to $\cBf$, the set $B_{\fstar}$ is also finite~\cite{Tho69}. 

%
%

\begin{lemma}\label{lem:affine_then_very-affine}
We have
\begin{align}\label{eq:bifurcation_set_hyperplanes}
\cBf & = \bigcup_{H}\cB_{\left. f\right|_{H}}~\cup \cB^{\infty}_{\fstar}\cup \cBfz,
\end{align} where $H$ runs over all coordinate hyperplanes in $\K^n\setminus\TTnK$, and $\fr_{H}$ are the restricted functions $X\cap H\longrightarrow \K$.
%
%
\end{lemma}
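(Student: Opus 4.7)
The plan is to prove the two inclusions separately, exploiting the stratification $X = X^* \sqcup \bigcup_H (X\cap H)$, where $H$ runs over the coordinate hyperplanes of $\K^n$.

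\textbf{The easy direction $\supseteq$.} Here I would verify each of the three terms on the right-hand side lies in $\cBf$. The inclusion $\cBfz\subseteq \cBf$ is immediate from the definition. For $z\in \cB_{\fr_H}$, local triviality of $f$ near $z$ on $X$ would restrict to local triviality of $\fr_H$ on the closed subvariety $X\cap H$, contradicting $z\in\cB_{\fr_H}$; hence $z\in\cBf$. For $z\in\cB^{\infty}_{\fstar}$, the failure of local triviality of $\fstar$ on $f^{-1}(D)\cap X^*$ outside arbitrarily large compacts directly yields failure of local triviality for $f$ on $f^{-1}(D)\cap X$ outside the same compacts, so $z\in\cBfi\subseteq\cBf$.

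\textbf{The harder direction $\subseteq$.} I would use the decomposition $\cBf=\cBfz\cup\cBfi$ recalled earlier in \S\ref{sec:real-complex}. Fix $z\in\cBf$; if $z\in\cBfz$ we are done, so assume $z\in\cBfi\setminus\cBfz$. By definition of $\cBfi$, there exist an arbitrarily large compact $K\subset \K^n$ and a small disc $D$ around $z$ such that $f|_{f^{-1}(D)\setminus K}$ is not a $\cC^\infty$-fibration. I would argue that the obstruction to local triviality must be detected either on the open stratum $X^*$ or on one of the closed strata $X\cap H$. Concretely, if $z\notin\bigcup_H\cB_{\fr_H}$, then for each coordinate hyperplane $H$ the restriction $\fr_H$ is a locally trivial fibration over a neighborhood of $z$, and by shrinking $D$ and enlarging $K$ one may glue these trivializations on the boundary strata. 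The remaining failure must therefore occur on points of $f^{-1}(D)\cap X^*$ lying outside every compact, which is exactly the defining property of $z\in\cB^{\infty}_{\fstar}$.

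\textbf{Main obstacle.} The delicate part is the second inclusion: one must rule out that a sequence witnessing $z\in\cBfi$ is of a mixed type, with some coordinates bounded away from zero while others approach zero, so that the limit behavior is only partially captured by a single stratum. To handle this, I would pass to subsequences using the finiteness of the coordinate hyperplane collection and the semi-algebraic curve selection lemma: any semi-algebraic curve in $f^{-1}(D)$ escaping every compact either stays (eventually) in $X^*$ with $\|\x_\ell\|\to\infty$, giving a contribution to $\cB^{\infty}_{\fstar}$, or is eventually contained in the Zariski closure of some torus stratum of a smaller coordinate subspace, which is the intersection of $X$ with a coordinate hyperplane $H$; in the latter case the same curve witnesses $z\in\cB_{\fr_H}$. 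This exhausts the possibilities and completes the proof.
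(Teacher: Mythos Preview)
Your plan diverges from the paper in two ways, and one of them contains a genuine gap.

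\textbf{Different contrapositive, different mechanism.} For the inclusion $\subseteq$, the paper assumes $\lambda\notin \cB^{\infty}_{\fstar}\cup\cBfz$ and deduces $\lambda\in\cB_{\fr_H}$ for some $H$; you assume $\lambda\notin\bigcup_H\cB_{\fr_H}\cup\cBfz$ and try to deduce $\lambda\in\cB^{\infty}_{\fstar}$. The paper's route avoids gluing altogether: from $\lambda\notin\cB^{\infty}_{\fstar}$ one has, outside a large compact $K$ and over a small $U\ni\lambda$, diffeomorphic $\fstar$-fibers; since $\lambda\notin\cBfz$, their Euclidean closures in $X$ are also diffeomorphic. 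But $\lambda\in\cBf$ forces $f^{-1}(0)\setminus K\not\cong f^{-1}(z)\setminus K$ for some nearby $z$, so the discrepancy must be detected on some $(f^{-1}(\cdot)\setminus K)\cap H$, giving $\lambda\in\cB_{\fr_H}$. This is a fiber-by-fiber comparison of diffeomorphism types, not a gluing of trivializations, and it sidesteps the compatibility issues you flag as the ``main obstacle.'' (The paper does not argue the reverse inclusion $\supseteq$ at all; only $\subseteq$ is used downstream.)

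\textbf{The gap in your resolution of the obstacle.} Your proposed fix via curve selection does not work as stated. An arbitrary semi-algebraic curve in $f^{-1}(D)$ escaping every compact does \emph{not} witness membership in $\cBfi$, $\cB^{\infty}_{\fstar}$, or $\cB_{\fr_H}$: such curves exist in any unbounded fiber bundle, trivially fibered or not (take $f(x,y)=x$ on $\K^2$ and the curve $t\mapsto(z_0,t)$). Failure of local triviality is not encoded by the mere existence of an escaping curve but by a change in the topology of fibers (or, at the level of $\cKfi$, by the Rabier condition on gradients along the curve). So the dichotomy ``eventually in $X^*$ vs.\ eventually in some $X\cap H$'' is true for the curve, but the conclusion you draw from each branch is unjustified. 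If you want to stay on your contrapositive, you would need a genuine stratified isotopy argument (Thom's first isotopy lemma with Whitney/Thom regularity along the coordinate strata) to glue the boundary trivializations with one on $X^*$; that is considerably heavier than what the paper does, and the hypotheses required are not supplied here.
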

\begin{proof}
We consider only the case where $X\neq \TTnK$ as the complementary case is similar. Let $\lambda\in\cBf$, and assume that $\lambda\not\in\cB^{\infty}_{\fstar}\cup \cBfz$. Without loss of generality we set $\lambda=0$. Then, for every arbitrary large compact subset $K\subset \K^n$, there is a small enough neighborhood $U \ni 0$, for which the function $h:\fstar^{-1}(U)\setminus K\longrightarrow U$, obtained by restricting $\fstar$ to $\fstar^{-1}(U)\setminus K$, is a $\cC^{\infty}$-fibration. Consequently, for each $z\in U$, we have $h^{-1}(0)$ and $h^{-1}(z)$ are diffeomorphic. Then, from $0\not\in \cBfz:=f(\Crit f)$, the two preimages are diffeomorphic:
\begin{align}\label{eq:closures-diffeo}
\overline{h^{-1}}^{_X}(0)\cong &~\overline{h^{-1}}^{_X}(z),
\end{align} 
where  the closures are taken in the Eucludean topology induced on $X$. However, there exists a value $z\in U$ for which the following holds
\begin{align}\label{eq:closures-diffeo2}
f^{-1}(0)\setminus K\not\cong &~f^{-1}(z)\setminus K.
\end{align} We deduce from~\eqref{eq:closures-diffeo} and~\eqref{eq:closures-diffeo2} that for some coordinate hyperplane $H$, the preimage $(f^{-1}(0)\setminus K)\cap H $ is not diffeomorphic to $(f^{-1}(z)\setminus K)\cap H $ for $K$ large enough. This shows that $z\in\cB_h$.
\end{proof}

In order to prove Theorem~\ref{thm:face-generic_tuple}, we furthermore require the following notion for tuples $\bmf\in \K^{\bdel}$. Taking the coefficients of $\bmf$ in the parameter space $\K^{\bdel}$ as additional variables, we denote by $\{\bmf=0\}$ the locus $\{F = g_1=\cdots= g_r = 0\}\subset\TTnK\times\K^{\bdel}$. The \emph{Bertini discriminant} $\bm{B}$ of $\bdel$ (see e.g.,~\cite[Definition 3.5]{Est13}) is the bifurcation set of the map $\left.\pi\right|_{\{\bmf=\unze\}}$ 
by restricting to $\{F = g_1=\cdots= g_r = 0\}$ the projection 
\[
\pi:\TTnK\times\K^{\bdel} \longrightarrow \K^{\bdel}.
\] 

The Bertini discriminant can be computed as follows. For any facing $\bgam\prec\bdel$, we define $\Dbg\subset \K^{\bdel}$ as the closure of 
\begin{align}\label{eq:dgamma}
\left\lbrace \bmf~\left|~\exists \x_0\in \VsK(\bmf_{\bgam}),~\left.\Jac_{\x}(\bmf_{\bgam})\right|_{\x = \x_0} \text{ does not have full rank}\right\rbrace\right.
\end{align} In other words, the set $\Dbg$ consists of all $\bmf\in \K^{\bdel}$ for which there exists a point $\x_0\in\TTnK$ at which $\VsK(\bmf_{\bgam})$ is not a complete intersection. It was shown in~\cite[Theorem 1.1 and Proposition 4.10]{Est13} (see also~\cite[Section 5]{GKZ94}) that $\cdim\bm{B}=1$, and it holds that
\begin{align}\label{eq:Bertini_unions}
\bm{B} = &~ \bigcup_{\bgam\in\Ibdel} \Dbg.
\end{align}

\begin{remark}\label{rem:Cayley-real}
The mentioned results in~\cite{Est13} are formulated for the case where $\K=\C$. For the real case, the statement assumes that $\bmf$ consists of only one polynomial $F$~\cite[Section 5.A]{GKZ94}. Transitioning to arbitrary tuples $\bmf$ with $\K=\R$ can be done via the Cayley Trick (see e.g.,~\cite[Section 6.2]{Est10}).
\end{remark}
\begin{proof}[Proof of Theorem~\ref{thm:face-generic_tuple}]
Let $\bmf\in\K^{\bdel}\setminus\bm{B}$, and let $\Lambda(\bmf)\subset\K^{\bdel}$ be the line passing through $\bm{f}$ with direction $(1,0,\ldots,0)$, where the first coordinate corresponds to the constant term of $F$. In other words, we have 
\begin{align*}
\Lambda(\bmf):= & \left\lbrace (F-z,g_1,\ldots,g_r)~|~z\in\K\right\rbrace. 
\end{align*} On the one hand, if $\Theta\subset\K^\Delta$ is the set of all $\bmf$ at which $\Lambda(\bmf)$ intersects $\bm{B}$ transversally, then for each $\bmf\in\Theta$, the relation
\begin{align}\label{eq:Bertini-bifurcation}
\cB_{\fstar}= &~\bm{B}\cap \Lambda(\bmf),
\end{align} follows from the definition of $\bm{B}$. Note that if $\widetilde{\bdel}$ is the tuple of polytopes $(\widetilde{\Delta}_0,\Delta_1,\ldots,\Delta_r)$, where 
\begin{align*}
\widetilde{\Delta}_0:= & \conv\left( \Delta_0\cap\N^n\setminus\{\unze\} \right),
\end{align*} then $\Theta$ is the preimage, under $\Pi$, of the bifurcation set of the map $\left. \Pi\right|_{\bm{B}}$, where $\Pi:= \K^{\bdel}\longrightarrow \K^{\widetilde{\bdel}}$ is the projection taking a tuple $\bmf$ to $\tilde{\bmf}$ by forgetting the constant term of the first polynomial $F$. Thanks to a theorem of Verdier~\cite{verdier1976stratifications}, the set $\Theta$ contains a Zariski open subset $\Omega\subset\K^\Delta$.

On the other hand, for any $\bmf\in\K^\Delta$, and each facing $\bgam\prec\bdel$, we have 
\begin{align}\label{eq:R=DL}
\Rgf = &~ \Dbg\cap\Lambda(\bmf).
\end{align} Therefore, since $\Ibdel$ contains the tuple $\bdel$, it follows from~\eqref{eq:Bertini_unions},~\eqref{eq:Bertini-bifurcation}, and~\eqref{eq:R=DL} that
\begin{align}\label{eq:bbdI}
\cB_{\fstar}= &~\bigcup_{\bgam\in\Ibdel} \Rgf.
\end{align} Next, we show that we can replace the set $\Ibdel$ in~\eqref{eq:bbdI} by $\IObdel$. Recall that $\bmg_{\bgam}$ refers to $(\bmg_{\Gamma_i})_{i\in[\bgam]\setminus\{0\}}$, and $\bmf_{\bgam}$ to $(\bmf_{\Gamma_i})_{i\in[\bgam]}$ which includes $F_{\Gamma_0}$ whenever $0\in [\Gamma]$.

The set $\Dbg$ is a fiber bundle $\Proj^{-1}(\Proj(\Dbg))$, where $ \Proj:\K^{\bdel}\longrightarrow \K^{\bgam}$, $\bmf\longmapsto\bmf_{\bgam}$. Hence, since $\bmf\not\in \Dbg$, we have $\bmf_{\bgam}\not\in\Proj(\Dbg)$. This shows that $\Proj^{-1}(\bmf_{\bgam})\cap\Dbg = \emptyset$. Then, if $\bgam$ is not origin, we get $\Lambda(\bmf)\subset\Proj^{-1}(\bmf_{\bgam})$. We conclude that $\Rgf = \emptyset$ if $\bgam$ is not origin, and thus we get 
\begin{align}\label{eq:bbdIO}
\cB_{\fstar}= &~\bigcup_{\bgam\in\IObdel} \Rgf.
\end{align} 

It remains to prove that for any $z\in \cBf\setminus (\cB^{\infty}_{\fstar}\cup\cBfz)$, we have $z\in\Rgf$ for some $\bgam\in\IObdel$. By Lemma~\ref{lem:affine_then_very-affine}, we have $z\in  \cB_h^{\infty}$ for the restricted function $h:=\left. f\right|_{\{x_i = 0\}}:X\cap \{x_i = 0\}\longrightarrow \K$ for some $i\in [n]$. Without loss of generality, we assume that $i=n$. Notice that $g$ can also be expressed as the function
\begin{align}\label{eq:f-restr=g}
f_{\Gamma'}:= \left. F_{\Gamma_0'}\right|_{_{X\cap\{x_n=0\}}}\V_\K(\bmg_{\bgam'})\longrightarrow \K,
\end{align}
where $\bgam'\prec\bdel$ is the facing whose Minkowski sum spans the coordinate hyperplane $\{x_n = 0\}\subset\R^n$. Indeed, we deduce this by plugging $x_n = 0$ into the equations of $\bmf$. 

By computing the Jacobian matrix, we deduce that $z\not\in\cB_f^0\Longrightarrow z\not\in\cB_h^0$. Then, thanks to Lemma~\ref{lem:affine_then_very-affine} and~\eqref{eq:bbdIO} applied to $h$, if $z\not\in\cR_{\Gamma}(h) $ for some important origin facing $\bgam\prec\bgam'$, then $z\in \cB_\ell$, where $\ell:= \left. h\right|_{_{\{x_j=0\}}}:X\cap \{x_jx_n = 0\}\longrightarrow\K$ for some $j\in [n-1]$. Notice that $\bgam$ is an important facing of $\bdel$, and that $\Rgf  = \mathcal{D}_{\Gamma}(h)$.

Applying recursively the above arguments, we deduce that either $z\in \Rgf $ for some $\bgam\in\IObdel$, or there exists $\widetilde{\bgam}\in\IObdel$ such that $z\in\cB_{f_{\widetilde{\Gamma}}}$, where $\widetilde{\Gamma}_0 = \{\unze\}$, and 
\begin{align}\label{eq:f-restr=g2}
f_{\widetilde{\Gamma}}:= \left. F_{\widetilde{\Gamma}_0}\right|_{_{X\cap\{x_1\cdots x_n=0\}}}\V_\K(\bmg_{\widetilde{\bgam}})\longrightarrow \K.
\end{align}
Clearly, we have $f_{\widetilde{\Gamma}}$ is the trivial map whose target space is the point $\{F(\unze)\}$. This finishes the proof.
\end{proof}

\begin{remark}\label{rem:1}
From the proof of Theorem~\ref{thm:face-generic_tuple}, in combination with Eq.~\eqref{eq:bbdIO}, we can deduce that $\cBf\subset\cB_{\fstar}$ if $\bmf\in\Omega$.
\end{remark}

\subsection{Bounding the infimum}\label{sub:bounding_infimum}

To obtain all the asymptotic critical values it suffices to go through all the face lattice of  $\bdel$ and compute the corresponding 
face discriminant of $f$, see Eq.~\eqref{eq:face-discriminant}.
As we aim for worst case bounds, we consider the worst case where all the polynomials are of degree $d$. That is we ignore their restriction to facings.
We emphasize, once more, that is suffices for our purposes as we are targeting an algorithm but bounds on the bitsize on the corresponding quantities. 

The face discriminant results in a polynomial system
of at most $\OO(s + n^n)$ 
polynomials; that is the $s$ polynomials $g_i$, the $\OO(n^n)$ polynomials coming from the rank condition on the Jacobian, and the polynomial $F - z$.
The bitsize of the polynomials is at most $\sOO(n \tau)$; dominated by the polynomials from the rank condition on the Jacobian.
Our goal is to bound the roots of this system, which is 0-dimensional, due to Bertini's theorem. 
In particular, we want to bound $z$, that corresponds to the asymptotic critical values and consequently to $f^{*}$.
Following \cite{emt-dmm-j}, the bounds appearing in Theorem~\ref{thm:main_Newton_non-degenerate} hold true for any coordinate of the roots of the system and for $f^{*}$.

\section{Unconstrained optimization and asymptotic critical values}
\label{sec:uncon-opt}

We present an algorithm (Alg.~\ref{alg:ACV}) to compute (and to obtain precise bounds on) the elements of the Rabier set $\cK(f)$, that is the set of generalized critical values, of a polynomial function $f : \CC^n \to \CC$, where $f \in \ZZ[\x]$. 
	One of these values corresponds to the global infimum of the (polynomial) optimization problem $f^* = \inf_{\x \in \RR^n} f(\x)$. 
We assume that $f$ has degree $d$ and maximum coefficient 
bitsize $\tau$, or in short that $f$ is of size $(d, \tau)$.

\subsection{Asymptotic critical values over {$\CC^n$}}
\label{sec:acv-Cn}

The (pseudo-code of the) algorithm in Alg.~\ref{alg:ACV} supports the computations of the asymptotic critical values of $f$ 
and is based on \cite{JelTib17}.
The main difference is the way that we perform the elimination. Instead of Gr\"obner
basis we use resultant matrices to control better the bitsize of the various objects
and the complexity of the overall algorithm.
The algorithm relies on the following notion.

\begin{definition} 
	\label{def:super-polar-curve}
For any $f\in\C[x_1,\ldots,x_n]$, we define a set $\{g_1,\ldots,g_{n-1}\}\subset\C[x_1,\ldots,x_n]$ of polynomials given by 
\[
g_k:= \sum_{i=1}^{n} a^{(k)}_i \frac{\partial f}{\partial x_i} 
        + \sum_{i,j =1}^{n} b^{(k)}_{i,j} x_i \frac{\partial f}{\partial x_j},\quad\text{for } k \in [n-1],
\] 
for some choice of coefficients $(\bma,\bmb):=(\bma^{(1)},\bmb^{(1)},\ldots,\bma^{(n-1)},\bmb^{(n-1)})\in\C^{n^3-n}$. Then, the \emph{super polar curve} of $f$ is the set 
\begin{align*}
\cG_f(\bma,\bmb) := & \overline{\V(g_1,\ldots,g_{n-1}) \setminus\Crit(f)},
\end{align*} where we use here the Zariski closure. The super polar curve is said to be \emph{non-degenerate} if $\cG_f(\bma,\bmb)\subset\C^{n}$ is indeed a curve, i.e., of dimension 1.
\end{definition}
Roughly speaking, super polar curves encapsulate the behaviour at infinity of the image of (sequence of) points in $\CC^n$ under $f$ when their norm tends to infinity. A key point of the algorithm is to ensure that the linear combinations $g_k$ are such that the resulting set $\cG_f(\bma,\bmb)$ is non-degenerate.

\begin{algorithm2e}[ht]
  \SetFuncSty{textsc} \SetKw{RET}{{\sc return}} \SetKw{OUT}{{\sc output \ }} \SetKwInOut{Input}{Input}
  \SetKwInOut{Output}{Output}
    \SetKwInOut{Require}{Require}
    \Input{$f \in \ZZ[x_1,\ldots, x _n]$}
    \Output{A set $K \in \CC$ such that $\Kinf(f) \subset K$.}

    \BlankLine

    Choose generic $a^{(k)}_i$ and $b^{(k)}_{i,j}$, for $k \in [n-1]$ and $i, j \in [n]$
    
    \For{$k \in [n-1]$} {
        $g_k(\x)= \sum_{i=1}^{n} a^{(k)}_i \frac{\partial }{\partial x_i}f(\x) 
        + \sum_{i,j =1}^{n} b^{(k)}_{i,j} x_i \frac{\partial}{\partial x_j}f(\x)$ \;
    }
    
    \For{$i \in [n]$}{
	  $\bar{h}_i \gets \mathtt{Elim}(\{g_1(\x), \dots, g_{n-1}(\x), f(\x)-z \}, \x_{-i}) \in \ZZ[x_i, z]$ \;
	  $h_i \gets \lcoeff(\bar{h}_i, x_i)$ \;
    }
    
    $h \gets \prod_{i=1}^{n} h_{i}(z) \in \ZZ[z]$ \;


    \BlankLine
    \RET  $K := \{\gamma \in \CC \,|\, h(\gamma) = 0 \}$  \tcc{The distinct roots of $h \in \ZZ[z]$}
    \caption{\textsc{AsymptoticCriticalValues$(f)$}}
  \label{alg:ACV}
\end{algorithm2e}

To study the complexity of Alg.~\ref{alg:ACV},
the bitsize of the
elements of a linear transformation that we can apply to $f$ and its derivatives to obtain the polynomials $g_k$, such that the corresponding super-polar curve is non-degenerate.
This bound permits us to estimate the probability of success
when we perform random linear combinations.

Nevertheless, if needed, we can always obtain a Las Vegas algorithm,
that is an algorithm that always returns a correct output, if we allow ourselves the cost of certifying the result; that is to check if the resulting super-polar curve is indeed non-degenerate.
%
It was shown in~\cite{JelTib17} that Alg.~\ref{alg:ACV} computes correctly the Rabier set of a polynomial $f$. Proving our main result, Theorem~\ref{thm:unconstrained_detailed}, relies on the complexity analysis of Alg.~\ref{alg:ACV}.
For this we need two technical results. The first is the following proposition, the proof of which is in the Appendix (Lemma~\ref{lem:projV-intersect-H}).

\begin{proposition}
  \label{prp:affV-intersect-H}
  Consider an affine variety $V \in \CC^n$
  of dimension $\delta$ and degree $D$.
  Let $S$ be finite set of numbers such that $0 \not\in S$.
  Also consider the linear form $h(\x) = c_0 + \sum_{i=1}^nc_i x_i$ and $H = \VV(h)$.
  If we choose the $c_i$'s independently and uniformly at random from $S$, then
  \[ \Pr[ \dim(V \cap H) = \delta - 1 ] \geq 1 - \tfrac{D}{\abs{S}}.
  \]
  Moreover, there is a specialization of the $c_i$'s to integers  such that
  their bitsize is $\lceil \lg( 2 \kappa D) \rceil + 1$, where $\kappa \geq 2$ is a constant,
  so that it holds $\dim(V \cap H) = \delta -1$.
\end{proposition}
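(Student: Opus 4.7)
The plan is to decompose $V$ into irreducible components and bound, via a Schwartz--Zippel-style argument, the probability that the random hyperplane $H$ contains any of the top-dimensional components. The extra parameter $c_0$ is important because it guarantees that the restriction of $h$ to any fixed point is a genuine degree-$1$ polynomial in the coefficient tuple $\bm c$.

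\medskip

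\noindent\textbf{Step 1: Reducing to top-dimensional components.}
Write $V = V_1 \cup \cdots \cup V_k$ as a union of irreducible components, and reorder so that $V_1, \dots, V_m$ are the components of maximal dimension $\delta$. For any hypersurface $H$, each nonempty intersection $V_j \cap H$ satisfies either $V_j \subset H$ (hence $\dim(V_j \cap H) = \dim V_j$) or $\dim(V_j \cap H) = \dim V_j - 1$ by Krull's Hauptidealsatz. Lower-dimensional components contribute at most dimension $\delta - 1$ in either case. Hence $\dim(V \cap H) = \delta - 1$ if and only if no $V_j$ with $j \leq m$ is contained in $H$. The key observation is that $m \leq \deg(V) = D$, since each $\delta$-dimensional component contributes at least one to the degree.

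\medskip

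\noindent\textbf{Step 2: Bounding the probability for one component.}
Fix a $\delta$-dimensional component $V_j$, and pick any point $p = (p_1,\dots,p_n) \in V_j$. If $V_j \subset H$ then in particular $h(p) = 0$, i.e., $c_0 + \sum_{i=1}^n c_i p_i = 0$. Regarded as a polynomial in the random coefficients $\bm c = (c_0,c_1,\dots,c_n)$, this is a nonzero polynomial of degree $1$ (the coefficient of $c_0$ is $1$). By the Schwartz--Zippel lemma applied over the sample set $S$,
\[
\Pr[\,V_j \subset H\,] \;\leq\; \Pr[\,h(p) = 0\,] \;\leq\; \tfrac{1}{\abs{S}}.
\]

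\medskip

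\noindent\textbf{Step 3: Union bound and specialization.}
Combining Steps~1 and~2 via a union bound over the (at most $D$) top-dimensional components yields
\[
\Pr[\,\dim(V \cap H) \neq \delta - 1\,] \;\leq\; \sum_{j=1}^{m} \Pr[\,V_j \subset H\,] \;\leq\; \tfrac{m}{\abs{S}} \;\leq\; \tfrac{D}{\abs{S}},
\]
which gives the stated probability estimate. For the second assertion, take $S = \{-\kappa D,\dots,-1,1,\dots,\kappa D\}$ so that $\abs{S} = 2\kappa D$ (and $0\notin S$). The probability bound then becomes $1 - \tfrac{1}{2\kappa} \geq \tfrac{3}{4} > 0$ for $\kappa \geq 2$, so a valid integer specialization exists in $S$. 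Each element of $S$ has absolute value at most $\kappa D$, which requires $\lceil \lg(2\kappa D) \rceil$ bits for the magnitude plus one sign bit, giving the bitsize $\lceil \lg(2\kappa D) \rceil + 1$.

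\medskip

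\noindent\textbf{Main obstacle.}
There is no deep obstacle; the only subtlety is handling the role of the affine translation $c_0$ correctly. Without $c_0$, a random linear form could fail to separate a component from the origin (e.g., if $V_j$ passes through the origin), and the one-point Schwartz--Zippel trick would need to be replaced by picking a point with $\sum c_i p_i \not\equiv 0$ as a polynomial in $\bm c$. The inclusion of $c_0$ and the assumption $0 \notin S$ avoid this technicality and make the degree-$1$ Schwartz--Zippel step completely uniform across components.
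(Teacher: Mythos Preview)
Your argument has a genuine gap. In Step~1 you claim that $\dim(V\cap H)=\delta-1$ if and only if no top-dimensional component $V_j$ is contained in $H$. The ``only if'' direction is fine, but the ``if'' direction fails: even when no $V_j$ lies in $H$, it may happen that $V_j\cap H=\emptyset$ for every top-dimensional $V_j$, in which case $\dim(V\cap H)<\delta-1$ (or $V\cap H=\emptyset$). Krull's Hauptidealsatz only tells you that a \emph{nonempty} proper intersection drops dimension by one; in the affine setting nothing prevents the intersection from being empty. Your Schwartz--Zippel step bounds $\Pr[V_j\subset H]$ but says nothing about $\Pr[V_j\cap H=\emptyset]$.

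Here is a concrete failure of your argument. Take $V=\{(t,t):t\in\CC\}\subset\CC^2$ (so $\delta=D=1$) and $S=\{-1,1\}$. Then $V\subset H$ would require $c_0=0$, which is impossible since $0\notin S$; your proof therefore outputs $\Pr[\dim(V\cap H)=0]\geq 1$. But $V\cap H=\emptyset$ exactly when $c_1+c_2=0$, which occurs with probability $1/2$, so the true probability is $1/2$, matching the proposition's bound $1-D/|S|=1/2$ but contradicting your conclusion.

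The paper handles this missing case by passing to projective closures $\bar V,\bar H$ and imposing two conditions: (i) $\dim(\bar V\cap\bar H)=\delta-1$, and (ii) $\bar V\cap\bar H\not\subseteq L_\infty$. Condition~(ii) is precisely what rules out the affine intersection being empty. Each condition is controlled by an application of the projective lemma (Lemma~\ref{lem:projV-intersect-H}), and the two are combined by a union bound. To repair your approach you would need an analogous second estimate for the emptiness event.
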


The the second technical result is as follows.

\begin{lemma}[Genericity of the super polar curve]
  \label{lem:lin-transform-super-polar}

 For any $f\in\Z[x_1,\ldots,x_n]$, there are integers $(\bm{a}, \bm{b}) \in \ZZ^{n^{3}-n}$ of bitsize $\OO(n^{2} + \lg{d})$ such that the super-polar curve $\Gfab$ is non-degenerate.

  In
  addition, if we pick the elements of $(\bm{a}, \bm{b})$ uniformly at random
  from a set $S$ containing $2~cnd^{n-1}$ elements, where $c$ is a constant,
  then the super-polar curve is non-degenerate with probability
  $\geq 1 - \tfrac{1}{c}$.
\end{lemma}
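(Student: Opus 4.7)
The plan is to show that $\Gfab$ has dimension exactly one via an iterated Bertini-type slicing argument, controlling the degrees of the intermediate varieties by B\'ezout and each dimension drop via a genericity argument on the linear system spanned by $\{\partial_{i}f,\, x_{j}\partial_{\ell}f\}_{i,j,\ell\in[n]}$.

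Set $V_{0}=\CC^{n}\setminus\Crit(f)$ and, for $k\in[n-1]$, let $V_{k}=V_{k-1}\cap\VV(g_{k})$, so that $\Gfab=\overline{V_{n-1}}$. First I would show by induction on $k$ that, for generic $(\ak,\bk)$, the set $V_{k}$ has pure dimension $n-k$. The key observation is that at every $\x\in V_{k-1}$ some partial derivative $\partial_{i}f(\x)$ is nonzero (since $\x\notin\Crit(f)$); hence the linear system spanned by $\{\partial_{i}f,\, x_{j}\partial_{\ell}f\}$ is basepoint-free on $V_{k-1}$, and a generic $g_{k}$ does not vanish identically on any irreducible component of $V_{k-1}$. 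Krull's Hauptidealsatz then forces $\dim V_{k}=\dim V_{k-1}-1$, and for $k=n-1$ we obtain a curve.

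To quantify genericity, I would apply Proposition~\ref{prp:affV-intersect-H} iteratively. At step $k$, the choice $(\ak,\bk)\in\CC^{n+n^{2}}$ determines a hyperplane in parameter space, and through the evaluation map $\x\mapsto(\partial_{i}f(\x),\,x_{j}\partial_{\ell}f(\x))$ the condition $g_{k}(\x)=0$ on $V_{k-1}$ translates into a hyperplane section on the image in $\CC^{n+n^{2}}$. Iterated B\'ezout yields $\deg V_{k}\leq d^{k}$ since each $g_{k}$ has degree at most $d$. A union bound over $k=1,\dots,n-1$ then bounds the total failure probability by $\sum_{k=0}^{n-2}d^{k}/|S|\leq 2d^{n-2}/|S|$; choosing $|S|=2cnd^{n-1}$ gives success probability at least $1-\tfrac{1}{c}$. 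The deterministic existence statement with the stated bitsize follows from the specialization clause of Proposition~\ref{prp:affV-intersect-H} applied at each of the $n-1$ steps, with the bitsize absorbing the cumulative $\lg\deg V_{k}$ contributions.

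The main obstacle is that the $g_{k}$ are \emph{not} arbitrary degree-$d$ hypersurfaces but lie inside a fixed $(n+n^{2})$-dimensional linear system, so one must verify that this system remains rich enough after restriction to each intermediate $V_{k}$ for the hyperplane-section bounds to transfer correctly through the evaluation map. The observation that $\x\notin\Crit(f)$ forces some $\partial_{i}f(\x)\neq 0$ resolves this: no component of $V_{k-1}$ is mapped entirely into a coordinate hyperplane $\{\partial_{i}f=0\}$ of $\CC^{n+n^{2}}$, so no dimension drop degenerates and Proposition~\ref{prp:affV-intersect-H} applies at every step.
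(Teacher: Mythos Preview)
Your iterated-slicing approach is genuinely different from the paper's. The paper invokes \cite{JelTib17} as a black box for the existence of a Zariski open $\cZ\subset\CC^{n^3-n}$ on which $\Gfab$ is non-degenerate, then constructs a \emph{single} resultant $R\in\ZZ[\bm{\ell},\bm{a},\bm{b}]$ for the overdetermined system $\{L_0=g_1=\dots=g_{n-1}=L_n=0\}$ (with two auxiliary linear forms $L_0,L_n$), verifies $R\not\equiv 0$, and applies Schwartz--Zippel once to $R$ viewed as a polynomial of total degree at most $nd^{n-1}$ in $(\bm{a},\bm{b})$. Your step-by-step Bertini argument is more geometric and more self-contained, and in fact yields a slightly tighter failure bound; the paper's route is coarser but avoids tracking the intermediate $V_k$.

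There is, however, a gap in your inductive step. Basepoint-freeness together with Krull's Hauptidealsatz only guarantees that $\overline{V_{k-1}}\cap\VV(g_k)$ is equidimensional of dimension $n-k$ once $g_k$ does not vanish identically on any component; it does \emph{not} rule out the possibility that every component of this intersection lies inside $\Crit(f)$. In that case $V_k=V_{k-1}\cap\VV(g_k)$ is empty, the induction collapses, and $\Gfab$ is empty---hence degenerate by definition. Your final paragraph establishes that the linear system has no base points on $V_{k-1}$ (since some $\partial_i f$ is nonzero there), but basepoint-freeness alone is not enough: you additionally need the evaluation map $\Phi$ to be non-constant on each component so that a generic hyperplane section is nonempty. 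That does hold (for instance because the ratios $x_j\partial_i f/\partial_i f=x_j$ cannot all be constant on a positive-dimensional component), but you have not argued it, and converting this into an effective probability bound within the framework of Proposition~\ref{prp:affV-intersect-H} requires extra work---that proposition handles non-emptiness of affine hyperplane sections via a separate condition at infinity which has no direct analogue for your linear-system sections. The paper sidesteps this issue entirely by citing \cite{JelTib17}.
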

\begin{proof}

Assume that each of the polynomials The polynomials $g_{1},\ldots,g_{n-1} \in (\ZZ[\bm{a}, \bm{b}])[\bm{x}]$ are of degree at most $d$ with respect to $\x$.

Following \cite{JelTib17}, we know that if $(\bm{a}, \bm{b})$
lies in a Zariski open set $\cZ\subset\CC^{n^3-n}$, then $\Gfab$ is non-degenerate.


Then, thanks to Proposition~\ref{prp:affV-intersect-H}, there are linear
polynomials, say $L_0, L_n \in \ZZ[\x]$,
where $L_{i} = \ell_{i,0} + \sum_{j=1}^{n}\ell_{i,j}x_{j}$ for $i \in \{0, n\}$,
such that zero set of the
following polynomial system 
\begin{equation}
  \label{eq:sys-Gamma}
  \{ L_{0} = g_{1} =  \dots =  g_{n-1} =  L_{n} = 0\},
\end{equation}
has no solutions for any $(\bma,\bmb)\in\cZ$.

The coefficients of the linear polynomials $L_0$ and $L_n$
are integers, but for the moments we consider them as (new) parameters.
The system~\eqref{eq:sys-Gamma} consists of $n+1$ polynomials in $n$ variables,
i.e., $\{x_{1}, \dots, x_{n}\}$.

Let $\bm{\ell}$ denote the set of cofficients of $L_{0}$ and $L_n$.
Eliminating the variables $\x$ from the system~\eqref{eq:sys-Gamma} we obtain an ideal generated by a polynomial $R\in\C[\bm{\ell},\bma,\bmb]$, called the \emph{resultant}. Note that $R$ is not identically zero. Indeed, from $(\bma,\bmb)\in\cZ$, we get $\dim(\Gfab) = 1$ and thus Proposition~\ref{prp:affV-intersect-H}
guarantees that there are (two) linear polynomials, hence a specialization of $\bm{\ell}$,
such that
the system in~\eqref{eq:sys-Gamma} has no solutions. Therefore,  for any choice $(\bm{\ell},\bma,\bmb)$ such that $ (\bma,\bmb)\in\cZ$ and $R(\bm{\ell},\bma,\bmb)\neq 0$, we have $\dim \Gfab = 1$ and~\eqref{eq:sys-Gamma} has no solutions.

Notice that $R \in (\ZZ[\bm{c}, \bm{\ell}])[\bm{a}, \bm{b}]$
and as a polynomials in $\bm{a}$, or $\bm{b}$
it has at most $n^{3} - n$ variables
and its total degree is at most  $n d^{n-1}$.




Following the DeMillo-Lipton-Schwartz-Zippel (DLSZ) lemma~\cite{lipton,saxena,sharir},
if $S$ is a finite set of integers, then $R$ has at most
$2 n d^{n-1} \abs{S}^{n^{3}-n-1}$ solutions in the grid $S^{n^3 - n}$.

If $S$ contains the first $2~c  n d^{n-1}$ integers (where $c$ is a constant
greater than 2), then we deduce that there is a specialization of $( \bm{a}, \bm{b})$ such
that $\dim(\Gamma) = 1$. Also the bitsize of the elements of $\bm{a}$ and $\bm{b}$
is  $\OO(n^2 + \lg(d))$.

Moreover, thanks to (DLSZ), if we pick a specialization for $(\bm{a}, \bm{b})$ uniformly at random
in the grid $S^{n^3 - n}$, then $\dim(\Gamma) = 1$
with probability $\geq 1 - \tfrac{1}{c}$.
\end{proof}

\begin{theorem}
  \label{thm:gen-asympt-computation}
  Let $f \in \ZZ[\x]$ be of (total) degree at most $d$ and bitsize $\tau$. Then,
  Alg.~\ref{alg:ACV} is a Monte Carlo algorithm that computes the
  asymptotic critical values in
  $\sOB( n^{\omega n - \omega + 1} d^{(\omega + 3)n - \omega -1} \tau)$
  bit operations,
  where $\omega$ is the exponent of the complexity of matrix multiplication.

	The asymptotic critical values are algebraic numbers of degree $\OO(d^{n-1})$ and the (sum of the) bitsize(s) of their isolating intervals (or boxes) is 
  $\sOO(n d^{2n-2}\tau + n^{2}d^{n-1})$.
 \end{theorem}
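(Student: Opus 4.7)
The plan is to decompose the argument into four pieces: correctness (with the stated probability of success), the degree bound $\OO(d^{n-1})$, the bit complexity of the elimination, and the sum of bitsizes of the isolating intervals.

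First I would establish correctness. Lemma~\ref{lem:lin-transform-super-polar} ensures that sampling $(\bma,\bmb)$ uniformly from the first $2cnd^{n-1}$ integers makes the super-polar curve $\Gfab$ non-degenerate with probability at least $1-1/c$. By \cite{JelTib17}, conditional on non-degeneracy, $\Kinf(f)$ equals the set of limit values $\lim_\ell f(\x_\ell)$ along sequences $\{\x_\ell\}\subset\Gfab$ with $\|\x_\ell\|\to\infty$; such a sequence must escape in some coordinate direction $x_i$, which algebraically amounts to the leading coefficient $h_i(z)=\lcoeff(\bar h_i,x_i)$ of the elimination polynomial $\bar h_i\in\ZZ[x_i,z]$ vanishing at $z$. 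Hence $\Kinf(f)\subseteq\VV(\prod_i h_i)$, matching the output of Alg.~\ref{alg:ACV}.

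For the degree bound, $\Gfab$ is the closure of $\VV(g_1,\ldots,g_{n-1})\setminus\Crit(f)$, an intersection of $n-1$ hypersurfaces of degree $\leq d$, hence by Bezout $\deg\Gfab\leq d^{n-1}$. The projection $\Gfab\to\CC^2$, $\x\mapsto(x_i,f(\x))$, has image of degree $\leq d^{n-1}$, so $\bar h_i$ has total degree $\OO(d^{n-1})$, $h_i$ has degree $\OO(d^{n-1})$ in $z$, and every asymptotic critical value is algebraic of degree $\OO(d^{n-1})$.

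For the bit complexity, the $(\bma,\bmb)$ have bitsize $\OO(n^2+\lg d)$ by Lemma~\ref{lem:lin-transform-super-polar}, so each $g_k$ has bitsize $\sOO(\tau+n^2)$. To perform the elimination in line~6 I would use a $u$-resultant: adjoining $L_i=u_0+u_1x_i$, the system $\{g_1,\ldots,g_{n-1},f-z,L_i\}$ of $n+1$ polynomials in $n$ variables admits a Macaulay matrix of size $N=\OO((nd)^{n-1})$, whose resultant has degree $\OO(d^{n-1})$ in $z$ and $\OO(d^n)$ in each of $u_0,u_1$. Evaluation--interpolation over $\OO(d^{3n-1})$ nodes, each requiring an $N\times N$ integer determinant with bitsize-$\sOO(\tau+n^2)$ entries, costs $\sOB(N^\omega\tau)$ per node, so the total is $\sOB(d^{3n-1}\cdot(nd)^{\omega(n-1)}\tau)=\sOB(n^{\omega n-\omega+1}d^{(\omega+3)n-\omega-1}\tau)$ after absorbing polylogarithmic factors. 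Finally, Hadamard--Mignotte height bounds for the Macaulay resultant give $h_i\in\ZZ[z]$ of bitsize $\sOO(d^{n-1}\tau+n^2d^{n-1})$; applying the Davenport--Mahler--Mignotte root-separation framework of \cite{emt-dmm-j} to a univariate polynomial of degree $D=\OO(d^{n-1})$ and bitsize $L$ yields a sum of isolating-interval bitsizes of $\sOO(DL+D^2)$, which summed over $i\in[n]$ recovers the claimed $\sOO(nd^{2n-2}\tau+n^2d^{n-1})$. The main obstacle is producing the precise exponents in the $\sOB$ bound: balancing $N$, the $z$- and $u$-degrees, and entry bitsizes so that the interpolation scheme respects the stated bound while avoiding the height blow-up that a direct symbolic elimination would incur.
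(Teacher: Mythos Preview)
Your high-level decomposition (correctness via Lemma~\ref{lem:lin-transform-super-polar} and \cite{JelTib17}, degree via B\'ezout, bit complexity via a Macaulay-matrix elimination, separation via \cite{emt-dmm-j}) matches the paper's proof. The differences lie in how the elimination is carried out, and there is one genuine gap.

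\textbf{Different elimination scheme.} The paper does not adjoin a $u$-resultant form. Instead it treats $x_i$ and $z$ as hidden parameters, so that $\{g_1,\dots,g_{n-1},f-z\}$ becomes a square system of $n$ polynomials in the $n-1$ variables $x_1,\dots,\hat x_i,\dots,x_n$ with coefficients in $\ZZ[x_i,z]$; the Macaulay matrix then has size $N=\binom{m+n-1}{n-1}\le (nd)^{n-1}$. Your $u$-resultant setup, taken literally, has $n+1$ polynomials in $n$ variables (hence $n+1$ homogeneous forms in $n+1$ variables), and the corresponding Macaulay matrix has size $\binom{m+n}{n}$, which is larger by a factor $\Theta(d)$ and would inflate the exponent by $\omega$. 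You can recover the stated $N$ by substituting $x_i=-u_0/u_1$ first, but then the $u$-parameters are redundant and you are back to the paper's hidden-variable approach. For the actual computation the paper uses Kronecker substitution (sending $x_i$ and $z$ into powers of a single variable $s$) rather than evaluation--interpolation; both are legitimate, but the Kronecker route makes the arithmetic count $\OO(N^{\omega}d^{2n})$ immediate and feeds directly into the stated bit bound.

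\textbf{Missing degeneracy handling.} The Macaulay resultant is a ratio of two determinants, and the denominator (the extraneous-factor determinant) can vanish identically or at specific interpolation nodes for the particular $g_k$ produced by the random linear combinations. The paper addresses this explicitly by invoking Canny's generalized characteristic polynomial \cite{canny_generalised_1990}: perturb $g_k\mapsto g_k+s\,x_k^{d}$, compute the resulting $S_n\in\ZZ[x_i,z][s]$, and read off $R_n$ as the lowest-degree nonzero coefficient in $s$. Your evaluation--interpolation scheme, as written, has no mechanism to cope with nodes where the denominator vanishes, so the algorithm may fail even when the super-polar curve is non-degenerate. This is the one place where an extra idea is needed, and without it the complexity claim is not substantiated.

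\textbf{Height of $h_i$.} Your bitsize estimate $\sOO(d^{n-1}\tau+n^{2}d^{n-1})$ for $h_i$ differs from the paper's $\sOO(n d^{n-1}\tau+n^{2})$, obtained from the multihomogeneity of the resultant in the coefficients of each $G_k$. With the paper's height, a single application of \cite{emt-dmm-j} to $h_i$ (degree $D=\OO(d^{n-1})$, bitsize $L=\sOO(nd^{n-1}\tau+n^{2})$) gives $\sOO(DL)=\sOO(nd^{2n-2}\tau+n^{2}d^{n-1})$ directly, matching the statement; your version would produce a second term $n^{3}d^{2n-2}$ that is too large.
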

\begin{proof}
  First we construct the polynomials $g_{k}$, for $k \in [n-1]$;
  they are of degree $d$.
  Following Lemma~\ref{lem:lin-transform-super-polar}
  the bitsize of elements of $\bm{a}$ and $\bm{b}$ is $\OO(n^{2} + \lg{d})$,
  hence the bitsize of $g_{k}$ is $\OO(\tau + n^{2} + \lg{d})$.

  Then we consider the set of polynomials
  \begin{equation}
	\label{eq:sys-gk-fz}
	\{ g_1, \dots, g_{n-1}, f -z \},
  \end{equation}
  and we eliminate the variables
  $x_1, \dots, x_{i-1}, x_{i+1}, \dots, x_n$, for $i \in [n]$.
  Thus, we perform the elimination $n$ times.

	Without loss of generality, we may assume that $i = n$.


  After eliminating the variables $x_1, \dots, x_{n-1}$
  from the set of polynomials in \eqref{eq:sys-gk-fz},
  we obtain a polynomial $R_n \in \ZZ[x_n, z]$; this is the resultant of the
  polynomials in \eqref{eq:sys-gk-fz}.
  We compute $R_n$ using the Macaulay matrix, say $M$, corresponding
  to the system in \eqref{eq:sys-gk-fz}.
  In particular we express $R_n$ as the ratio of two determinant of
  (sub)matrices of $M$.
  It might happen that the determinant in the denominator is zero.
  To avoid this problem and still compute $R_n$ we use the technique of
  generalized characteristic polynomial \cite{canny_generalised_1990}.
  For this we perturb symbolically the polynomials $g_i$
  to become $G_i = g_i + s x_i^d$, where $s$ is a new variable and $i \in [n-1]$;
  and $G_n = f -z$.
  In this way, the ratio of determinants from $M$ result in a polynomial $S_n \in \ZZ[x_n, z][s]$.
  The non-vanishing coefficient of $S_n\in\Z[s]$, in front of the monomial with the smallest degree in $s$, 
  is a power of $R_n$.

  To obtain bounds on $S_n$ and $R_n$ we rely on \cite{emt-dmm-j}.
  Following \cite{CLO2}, $S_n$ is homogeneous
  in the coefficients of the each polynomial $G_i$.
  In particular, each term of $S_n$ is as follows
  \[
	\varrho_k \abs{\bm{c}_1}^{d^{n-1}} \cdots \abs{\bm{c}_n}^{d^{n-1}},
  \]
  where the semantics of $\abs{\bm{c}_i}^{d^{n-1}}$ are 
  that it represents a product of coefficients of the polynomial $G_i$ all of which have total degree $d^{n-1}$.
  The coefficients of $G_i$ are in $\ZZ[x_n, z, s]$, they have multidegree $(d, 0, 1)$, 
  for $i \in [n-1]$ and $G_n$ has multidegree $(d, 1, 0)$;
  their bitsize $\tau$.
  Hence,  $\abs{c_i}^{d^{n-1}}$ is of multidegree $(d^n, 0, d^{n-1})$ for $i \in [n-1]$
  and $G_n$ has multidegree $(d^n, d^{n-1}, 0)$;
  their bitsize $\OO(d^{n-1}\tau + d^{n-1}\lg(d))$.
  Also $\lg \abs{\varrho_k} = \OO(n d^{n-1} \lg(d))$.
  Consequently, each term is a polynomial in $\ZZ[x_n, z, s]$
  of multidegreee $(d^n, d^{n-1}, d^{n-1})$
  and bitsize $\OO(n d^{n-1}\tau + n d^{n-1}\lg(d) + n^2 \lg(n d))$,
  using Claim~\ref{claim:mul-mpoly}.
  As there are at most $\OO(d^{3n})$ terms, the bitsize of $S_n$ and $R_n$
  is $\OO(n d^{n-1}\tau + n d^{n-1}\lg(d) + n^2 \lg(n d))$.

  To actually compute $S_n$  and $R_n$ we use
  perform the elimination using the Macaulay matrix.
  We consider the homogenization of the polynomials $G_1, \dots, G_n$.
  be introducing a new variable $x_0$.

  Let $m = \sum_{i=1}^{m}(d-1) + 1 = n(d-1)+1 \leq nd$ and
  $N = {m +n -1 \choose n-1} \leq (nd)^{d-1}$.
  The latter corresponds to all monomials in $n$ variables of degree $\leq m$.

  As the elements of $M$ are polynomials in $\ZZ[x_n, z, s]$ we apply Kronecker's trick
  to obtain univariate polynomials.
  In particular, we perform the transformation
  $x_n \gets s^{d^{n-1}+1}$ and $z \gets s^{(d^{n-1}+1)d^n}$.
  Then, the elements of $M$ become polynomials in $\ZZ[s]$
  of degree $\leq d^{2n-1} + d^n +1 = \OO(d^{2n})$.
  Now, we can compute the determinants
  $\OO(N^{\omega} d^{2n}) = n^{\omega(n-1)} d^{(\omega + 2n) - \omega}$
  arithmetic operations.

  If we account on the bitsize of $S_n$ (and $R_n$) we deduce
  that we can compute them
  $\sOB( n^{\omega n - \omega + 1} d^{(\omega + 3)n - \omega -1} \tau)$
  bit operations.

  Hence, after we arrange the terms of the resultant, we can recover the polynomial
  and $\bar{h}_{n}$ and thus $h_{n} \in \ZZ[z]$.
  The latter has size $(d^{n-1}, \sOO(n d^{n-1}\tau + n^{2}))$.

  We isolate its real (or complex) real roots
  in $\sOB(n d^{3n-3}\tau + n^{2}d^{2n-2})$ \cite{Pan-usolve-02}.
  The (sum of the) bitsize of the isolating interval(s) (or boxes if we consider the complex roots)
  of the roots is $\sOO(n d^{2n-2}\tau + n^{2}d^{n-1})$, e.g., \cite{emt-dmm-j}.
\end{proof}

\subsection{Polynomial optimization over $\RR^n$}
\label{sec:opt-over-Rn}
In this part, we show how to use the Rabier set of $f$~\eqref{eq:bifur-Rabier}, together with Alg.~\ref{alg:ACV}, for computing the global infimum over $\RR^n$. 
Following Theorem~\ref{thm:bifur-semi} (see also, \cite{Safey-opt-R-2008}), it holds that
\begin{equation}
	\label{eq:minimum_in_critical}
	\inf_{x\in\RR^n}f(x)\subset \Kall(f)  \cap\RR,
\end{equation}
where $\Kall(f) = \Kall_0(f) \cup \Kinf(f)$ is the Rabier set of critical values of $f$. It remains to identify which 
of these values corresponds to the infimum of $f$.

If the Rabier values are $e_{1}, \dots, e_{m}$,
then we need to compute rational numbers that interlace them~\cite{Safey-opt-R-2008}, that is $r_{i}$
for $0 \leq i \leq m$, such that
\[
  r_{0} < e_{1} < r_{1} < \dots < r_{m-1} < e_{m} < r_{m}
\]
and test if the real hypersurfaces $H_i:= f^{-1}(r_{i})\cap\R^n$ are empty or not.
The bitsize of corresponding polynomial defining $H_i$ is dominated by the bitsize of $r_{i}$,
let it be $\sigma_{i}$. 
Thanks to Theorem~\ref{thm:gen-asympt-computation}, we have 
\[
\sum_{i=0}^{m}\sigma_{i} =  \sOO(n d^{2n-2}\tau + n^{2}d^{n-1}) .
\]

Next, we need the following theorem.
\begin{theorem}[\cite{elliott-bit-2020}]
  \label{thm:test-emptiness}
  For $f \in \ZZ[\x]$ of size $(d, \tau)$ there is a randomized algorithm that
  with probability $1 - \epsilon$ decides if $\VV(f) \cap \RR^{n}$ is empty or
  not using
  $\sOB(d^{3n}\lg(\tfrac{1}{\epsilon})(\tau + \lg(\tfrac{1}{\epsilon})))$ bit
  operations.
\end{theorem}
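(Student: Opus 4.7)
The plan is to reduce emptiness testing to univariate real-root isolation via the classical critical-point method, in the same spirit as the elimination arguments used in Sections~\ref{sec:acv-Cn} and~\ref{sec:constrained-opt}. First, pick a rational point $p \in \QQ^n$ whose entries are drawn uniformly at random from a grid of integers of size $\OO(d^n/\epsilon)$, and consider the squared-distance function $D_p(\x) = \sum_{i=1}^n (x_i - p_i)^2$. If $\VV(f) \cap \RR^n$ is non-empty, then, since this set is closed and $D_p$ is coercive, $D_p$ attains its minimum on every connected component of $\VV(f) \cap \RR^n$ at a critical point of $D_p$ restricted to $\VV(f)$. Hence emptiness of $\VV(f) \cap \RR^n$ is equivalent to the absence of real solutions of the Lagrangian system
\begin{equation*}
	f(\x) = 0, \qquad \lambda \, \nabla f(\x) = \x - p,
\end{equation*}
in the unknowns $(\x, \lambda)$. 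A Bertini-type argument shows that for $p$ outside a proper Zariski-closed subset of $\QQ^n$ of degree $\OO(d^n)$, the system is zero-dimensional and every connected component of $\VV(f) \cap \RR^n$ contains at least one such critical point; the DLSZ lemma then bounds by $\epsilon$ the probability that a random $p$ from the grid falls in this bad locus.

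Second, I would eliminate the variables $(\x, \lambda)$ from the Lagrangian system using the same Macaulay-style resultant machinery exploited in the proof of Theorem~\ref{thm:gen-asympt-computation}, producing a univariate polynomial $R(t) \in \ZZ[t]$ whose roots parametrize (via a rational univariate representation) the critical points, where $t$ is a generic rational linear form in $\x$. Multi-homogeneous B{\'e}zout bounds give $\deg R = \OO(d^n)$, and the size estimates of Claim~\ref{claim:mul-mpoly} yield a bitsize of $\sOO(d^n(\tau + \lg(1/\epsilon)))$, the second summand accounting for the bitsize of $p$ and of the generic separating linear form. Near-optimal real-root isolation~\cite{Pan-usolve-02} then costs $\sOB(d^{3n}(\tau + \lg(1/\epsilon)))$ bit operations, and back-substitution through the RUR certifies whether any real root of $R$ lifts to a real critical point of $D_p$. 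Incorporating the extra $\lg(1/\epsilon)$ factor coming from the grid size for $p$ yields the claimed total complexity $\sOB(d^{3n}\lg(\tfrac{1}{\epsilon})(\tau + \lg(\tfrac{1}{\epsilon})))$.

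The main obstacle is the sharp probabilistic-genericity analysis: one has to exhibit a concrete algebraic obstruction (a discriminant-type polynomial in the coefficients of $p$) whose non-vanishing encodes simultaneously zero-dimensionality of the Lagrangian system and that a real critical point is found in each connected component of $\VV(f) \cap \RR^n$, and then bound its degree by $\OO(d^n)$ so that DLSZ delivers probability $1-\epsilon$ while keeping the bitsize of $p$ at $\OO(n\lg d + \lg(1/\epsilon))$. A second delicate point is avoiding degeneracies that can make the Macaulay-type resultant vanish (e.g.\ when $\VV(f)$ has a positive-dimensional singular locus); as in Theorem~\ref{thm:gen-asympt-computation}, this is handled by an infinitesimal symbolic perturbation via the generalized characteristic polynomial of~\cite{canny_generalised_1990}. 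Finally, tightly matching the $d^{3n}$ exponent requires exploiting the sparsity of the Lagrangian matrix (many zero entries) rather than invoking a generic Macaulay bound, in parallel with the complexity bookkeeping carried out in the proof of Theorem~\ref{thm:gen-asympt-computation}.
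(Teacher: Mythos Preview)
The paper does not prove this theorem at all: it is quoted verbatim from~\cite{elliott-bit-2020} and used as a black box in~\S\ref{sec:opt-over-Rn}. There is therefore nothing in the paper to compare your proposal against.

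That said, your sketch is in the right family of arguments --- the critical-point method with a random distance function, followed by elimination and univariate root isolation --- which is indeed the approach of~\cite{elliott-bit-2020}. A few points where your outline would need tightening before it becomes a proof: (i) the cited result is for a \emph{smooth} real hypersurface, and your Lagrangian system $\lambda\,\nabla f(\x)=\x-p$ silently assumes $\nabla f\neq 0$ on the relevant locus; handling singular $\VV(f)$ requires either a deformation or a separate argument, and the statement as quoted does not actually cover that case. (ii) The claim that the bad locus for $p$ has degree $\OO(d^n)$ and that this single polynomial simultaneously guarantees zero-dimensionality \emph{and} that every real connected component acquires a critical point is exactly the delicate part of~\cite{elliott-bit-2020}; you correctly flag it as the main obstacle, but it is not something one gets from a generic Bertini statement alone --- it needs the polar-variety machinery. (iii) Your accounting of the $\lg(1/\epsilon)$ factors is plausible but loose: in~\cite{elliott-bit-2020} the multiplicative $\lg(1/\epsilon)$ outside and the additive one inside arise from distinct sources (repetition for boosting success probability versus bitsize growth of the random data), and your last sentence conflates them somewhat.

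In short: there is no proof in the paper to grade against; your proposal is a reasonable high-level reconstruction of the cited result, with the genericity analysis and the smoothness hypothesis being the places where real work remains.
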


Using Theorem~\ref{thm:test-emptiness} we can test the emptiness of all the real hypersurfaces in
\[
  \sum_{i=0}^{m}\sOB(d^{3n}\lg(\tfrac{1}{\epsilon})(\sigma_{i} + \lg(\tfrac{1}{\epsilon})))
  = \sOB(d^{3n}\lg(\tfrac{1}{\epsilon})(n d^{2n-2}\tau + \lg(\tfrac{1}{\epsilon})))
\]
bit operations with probability of success $1 - \tfrac{1}{\epsilon}$.

This bounds is the complexity of a Monte Carlo algorithm
for optimizing a multivariate polynomial over $\RR^{n}$.

By combining all the previous results, we have the following theorem.
\begin{theorem}\label{thm:unconstrained_detailed}
	Let $f \in \ZZ[x]$ be of degree $d$ and bitsize $\tau$.
	We can compute the optimum value of the problem $f^* = \inf{\x \in \RR^n} f(\x)$ in
	$\sOB(d^{3n}\lg(\tfrac{1}{\epsilon})(n d^{2n-2}\tau + \lg(\tfrac{1}{\epsilon})))$ bit operations, with probability of success $1 - \tfrac{1}{\epsilon}$.

In addition, it holds 
$2^{-\eta} \leq \abs{f^*} \leq 2^{\eta}$, where 
$\eta = \sOO(n d^{n-1}\tau + n^{2}))$. 
To distinguish $f^*$ from the other generalized critical values, we need approximate it with up to precision of at most 
$\sOO(n d^{2n-2}\tau + n^{2}d^{n-1})$ bits.
\end{theorem}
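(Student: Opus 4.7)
The plan is to separate the statement into two essentially independent pieces and treat each by leaning on results already assembled in the paper: bit/degree bounds on $f^{*}$, and a concrete algorithm that locates it among the Rabier values. The former is obtained from Theorem~\ref{thm:gen-asympt-computation} via standard univariate root bounds; the latter follows Safey El Din's interlacing strategy combined with the Monte Carlo emptiness tester of Theorem~\ref{thm:test-emptiness}.

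First, I would establish the bounds on $|f^{*}|$. Since $\scX = \RR^{n}$, the inclusion $f^{*}\in \cK(f)\cap\RR$ from~\eqref{eq:minimum_in_critical} reduces the task to bounding the real roots of the univariate polynomial $h=\prod_{i} h_{i}\in\ZZ[z]$ produced by Alg.~\ref{alg:ACV}. By Theorem~\ref{thm:gen-asympt-computation}, each factor $h_{i}$ has degree $\OO(d^{n-1})$ and bitsize $\sOO(nd^{n-1}\tau+n^{2})$, which immediately gives the algebraic degree bound $\OO(d^{n-1})$ and, by Cauchy's upper bound combined with Landau--Mahler's lower bound for the nonzero roots, the height estimate $2^{-\eta}\leq|f^{*}|\leq 2^{\eta}$ with $\eta=\sOO(nd^{n-1}\tau+n^{2})$.

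Second, for the algorithm I would run Alg.~\ref{alg:ACV} to obtain $h$, isolate its $m=\OO(d^{n-1})$ real roots $e_{1}<\cdots<e_{m}$, and pick interlacing rationals $r_{0}<e_{1}<r_{1}<\cdots<r_{m-1}<e_{m}<r_{m}$ (with $r_{0}$ and $r_{m}$ placed below and above the Cauchy bounds, respectively). Then $f^{*}=e_{j}$ for the smallest index $j$ such that $\{f=r_{j-1}\}\cap\RR^{n}=\varnothing$ but $\{f=r_{j}\}\cap\RR^{n}\neq\varnothing$; each emptiness check is discharged by the subroutine of Theorem~\ref{thm:test-emptiness}. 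The needed precision on the $r_{i}$ is dictated by the real-root separation of $h$, which by the Davenport--Mahler--Mignotte estimate applied to a polynomial of size $(d^{n-1},\sOO(nd^{n-1}\tau+n^{2}))$ yields $\sum_{i}\sigma_{i}=\sOO(nd^{2n-2}\tau+n^{2}d^{n-1})$, matching the announced precision.

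Finally, I would aggregate the bit complexity. The cost of Alg.~\ref{alg:ACV} and of real-root isolation (both in Theorem~\ref{thm:gen-asympt-computation}) are absorbed by the $m+1$ emptiness tests. Summing the per-call bound of Theorem~\ref{thm:test-emptiness} against the $\sigma_{i}$ gives $\sOB(d^{3n}\lg(1/\epsilon)(nd^{2n-2}\tau+\lg(1/\epsilon)))$ overall. The main subtlety to handle carefully is coordinating two layers of randomness: the random linear combinations inside Alg.~\ref{alg:ACV} that guarantee non-degeneracy of the super polar curve (Lemma~\ref{lem:lin-transform-super-polar}), and the Monte Carlo tester for real emptiness; both must be tuned so that the failure probabilities, compounded over all $\OO(d^{n-1})$ calls, remain at most $1/\epsilon$, which is achieved by rescaling $\epsilon$ by a factor that is polynomial in $d$ and hence absorbed by the $\sOO$ notation.
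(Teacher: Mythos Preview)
Your proposal is correct and follows essentially the same approach as the paper: reduce to the Rabier set via~\eqref{eq:minimum_in_critical}, invoke Theorem~\ref{thm:gen-asympt-computation} for the degree/bitsize of the univariate polynomial carrying the generalized critical values, interlace its real roots with rationals and locate $f^{*}$ by the emptiness tests of Theorem~\ref{thm:test-emptiness}, then sum the costs. The paper's own argument is in fact the terse ``combining all the previous results'' paragraph preceding the theorem; your write-up is more explicit (naming Cauchy/Landau--Mahler for the height bound, DMM for the separation estimate, and addressing the union-bound over the two randomized layers), but these are exactly the implicit steps behind the paper's citations to~\cite{emt-dmm-j} and~\cite{Safey-opt-R-2008}.
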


\bibliographystyle{abbrv}

\begin{thebibliography}{10}

\bibitem{bajaj1988algebraic}
C.~Bajaj.
\newblock The algebraic degree of geometric optimization problems.
\newblock {\em Discrete \& Computational Geometry}, 3:177--191, 1988.

\bibitem{BPR03}
S.~Basu, R.~Pollack, and {M-F.}Roy.
\newblock {\em Algorithms in Real Algebraic Geometry}, volume~10 of {\em
  Algorithms and Computation in Mathematics}.
\newblock Springer-Verlag, 2003.

\bibitem{bpr-06}
S.~Basu, R.~Pollack, and M.-F. Roy.
\newblock {\em Algorithms in Real Algebraic Geometry}.
\newblock Springer Berlin Heidelberg, 2006.

\bibitem{bd-cad-bound}
C.~W. Brown and J.~H. Davenport.
\newblock The complexity of quantifier elimination and cylindrical algebraic
  decomposition.
\newblock In {\em Proceedings of the 2007 international symposium on Symbolic
  and algebraic computation}, pages 54--60, 2007.

\bibitem{burr2020complexity}
M.~Burr, S.~Gao, and E.~Tsigaridas.
\newblock The complexity of subdivision for diameter-distance tests.
\newblock {\em Journal of Symbolic Computation}, 101:1--27, 2020.

\bibitem{canny_generalised_1990}
J.~Canny.
\newblock Generalised characteristic polynomials.
\newblock {\em Journal of Symbolic Computation}, 9(3):241--250, Mar. 1990.

\bibitem{caviness2012quantifier}
B.~F. Caviness and J.~R. Johnson.
\newblock {\em Quantifier elimination and cylindrical algebraic decomposition}.
\newblock Springer Science \& Business Media, 2012.

\bibitem{CLO2}
D.~A. Cox, J.~Little, and D.~O'shea.
\newblock {\em Using algebraic geometry}, volume 185.
\newblock Springer, 2006.

\bibitem{det-chow-24}
M.~L. Dogan, A.~A. Erg{\"u}r, and E.~Tsigaridas.
\newblock On the complexity of chow and hurwitz forms.
\newblock {\em ACM Communications in Computer Algebra}, 57(4):167--199, 2024.

\bibitem{draisma2016euclidean}
J.~Draisma, E.~Horobe{\c{t}}, G.~Ottaviani, B.~Sturmfels, and R.~R. Thomas.
\newblock The euclidean distance degree of an algebraic variety.
\newblock {\em Foundations of Computational Mathematics (FoCM)}, 16:99--149,
  2016.

\bibitem{Dur98}
A.~H. Durfee.
\newblock Five definitions of critical point at infinity.
\newblock pages 345--360. Springer, 1998.

\bibitem{elliott-bit-2020}
J.~Elliott, M.~Giesbrecht, and {\'E}.~Schost.
\newblock On the bit complexity of finding points in connected components of a
  smooth real hypersurface.
\newblock In {\em Proc. 45th International Symposium on Symbolic and Algebraic
  Computation (ISSAC)}, pages 170--177, 2020.

\bibitem{emt-dmm-j}
I.~Emiris, B.~Mourrain, and E.~Tsigaridas.
\newblock Separation bounds for polynomial systems.
\newblock {\em Journal of Symbolic Computation}, 101:128--151, 2020.

\bibitem{Est10}
A.~Esterov.
\newblock {N}ewton polyhedra of discriminants of projections.
\newblock {\em Discrete Comput. Geom.}, 44(1):96--148, 2010.

\bibitem{Est13}
A.~Esterov.
\newblock The discriminant of a system of equations.
\newblock {\em Advances in Mathematics}, 245:534--572, 2013.

\bibitem{farin2002curves}
G.~Farin and D.~Hansford.
\newblock {\em Curves and surfaces for CAGD: a practical guide}.
\newblock Morgan Kaufmann Publishers Inc., San Francisco, CA, USA, 5 edition,
  2002.

\bibitem{garg_special_2020}
A.~Garg.
\newblock Special case algorithms for {Nullstellensatz} and transcendence
  degree.
\newblock Master's thesis, IIT Kanpur, 2020.

\bibitem{GKZ94}
I.~M. Gel'fand, M.~M. Kapranov, and A.~V. Zelevinsky.
\newblock {\em Discriminants, resultants, and multidimensional determinants}.
\newblock Mathematics: Theory \& Applications. Birkh\"{a}user Boston, Inc.,
  Boston, MA, 1994.

\bibitem{GiHe93}
M.~Giusti and J.~Heintz.
\newblock La d{\'e}termination de la dimension et des points isol{\'e}es
  d’une vari{\'e}t{\'e} alg{\'e}brique peuvent s’ effectuer en temps
  polynomial.
\newblock In {\em Computational Algebraic Geometry and Commutative Algebra,
  Cortona}, volume~34, pages 216--256, 1991.

\bibitem{JK03}
Z.~Jelonek and K.~Kurdyka.
\newblock On asymptotic critical values of a complex polynomial.
\newblock {\em J. Reine Angew. Math.}, 565:1--11, 2003.

\bibitem{jelonek2005quantitative}
Z.~Jelonek and K.~Kurdyka.
\newblock Quantitative generalized bertini-sard theorem for smooth affine
  varieties.
\newblock {\em Discrete Comput. Geom.}, 34(4):659--678, 2005.

\bibitem{JelTib17}
Z.~Jelonek and M.~Tib{\u{a}}r.
\newblock Detecting asymptotic non-regular values by polar curves.
\newblock {\em International Mathematics Research Notices}, 2017(3):809--829,
  2017.

\bibitem{jeronimo2014probabilistic}
G.~Jeronimo and D.~Perrucci.
\newblock A probabilistic symbolic algorithm to find the minimum of a
  polynomial function on a basic closed semialgebraic set.
\newblock {\em Discrete \& Computational Geometry}, 52(2):260--277, 2014.

\bibitem{jpt-min-siopt-2013}
G.~Jeronimo, D.~Perrucci, and E.~Tsigaridas.
\newblock On the minimum of a polynomial function on a basic closed
  semialgebraic set and applications.
\newblock {\em SIAM Journal on Optimization}, 23(1):241--255, 2013.

\bibitem{Jean-Pierre_Jouanolou}
J.-P. Jouanolou.
\newblock {\em Th\'eor\`emes de Bertini et applications}.
\newblock Birkh\"auser, Boston, 1983.

\bibitem{khalil2002nonlinear}
H.~K. Khalil.
\newblock {\em Nonlinear systems}, volume~3.
\newblock Prentice hall Upper Saddle River, NJ, 2002.

\bibitem{kunz}
E.~Kunz.
\newblock {\em Introduction to commutative algebra and algebraic geometry}.
\newblock Springer Science \& Business Media, 1985.

\bibitem{la2024some}
C.~La~Valle and J.~Tonelli-Cueto.
\newblock Some lower bounds on the reach of an algebraic variety.
\newblock {\em arXiv e-prints}, pages arXiv--2402, 2024.

\bibitem{lipton}
R.~J. Lipton.
\newblock {The curious history of Schwarz-Zippel Lemma}.

\newblock {\em
  \url{}{https://rjlipton.wordpress.com
  /2009/11/30/the-curious-history-of
  -the-schwartz-zippel-lemma/}}, 2009.

\bibitem{mst-srus-17}
A.~Mantzaflaris, {\'E}.~Schost, and E.~Tsigaridas.
\newblock Sparse rational univariate representation.
\newblock In {\em Proceedings of the 2017 ACM on International Symposium on
  Symbolic and Algebraic Computation}, pages 301--308, 2017.

\bibitem{Nem86}
A.~N{\'e}methi.
\newblock Theorie de {L}efschetz pour vari{\'e}t{\'e}s algebriques affines.
\newblock {\em CR Acad. Sc. Paris}, 303:567--570, 1986.

\bibitem{NZ90}
A.~N{\'e}methi and A.~Zaharia.
\newblock On the bifurcation set of a polynomial function and newton boundary.
\newblock {\em Publications of the Research Institute for Mathematical
  Sciences}, 26(4):681--689, 1990.

\bibitem{Pan-usolve-02}
V.~Y. Pan.
\newblock Univariate polynomials: nearly optimal algorithms for numerical
  factorization and root-finding.
\newblock {\em Journal of Symbolic Computation}, 33(5):701--733, 2002.

\bibitem{pham2016genericity}
T.~S. Pham and H.~H. Vui.
\newblock {\em Genericity in polynomial optimization}, volume~3.
\newblock World Scientific, 2016.

\bibitem{rabier1997ehresmann}
P.~J. Rabier.
\newblock Ehresmann fibrations and {P}alais--{S}male conditions for morphisms
  of {F}insler manifolds.
\newblock {\em Ann. of Math.}, pages 647--691, 1997.

\bibitem{sharir}
O.~E. Raz, M.~Sharir, and J.~Solymosi.
\newblock "{P}olynomials vanishing on grids: The {E}lekes-{R}{\'o}nyai problem
  revisited".
\newblock In {\em Proceedings of the thirtieth annual symposium on
  Computational geometry}, page 251. ACM, 2014.

\bibitem{safey2007testing}
M.~Safey El~Din.
\newblock Testing sign conditions on a multivariate polynomial and
  applications.
\newblock {\em Mathematics in Computer Science}, 1(1):177--207, 2007.

\bibitem{Safey-opt-R-2008}
M.~Safey El~Din.
\newblock Computing the global optimum of a multivariate polynomial over the
  reals.
\newblock In {\em Proc. ACM 21st International Symposium on Symbolic and
  Algebraic Computation (ISSAC)}, pages 71--78, 2008.

\bibitem{saxena}
N.~Saxena.
\newblock Progress on polynomial identity testing.
\newblock {\em Bulletin of the EATCS}, 99:49--79, 2009.

\bibitem{schaefer2017fixed}
M.~Schaefer and D.~{\v{S}}tefankovi{\v{c}}.
\newblock Fixed points, nash equilibria, and the existential theory of the
  reals.
\newblock {\em Theory of Computing Systems}, 60(2):172--193, 2017.

\bibitem{schwartz1983piano}
J.~T. Schwartz and M.~Sharir.
\newblock On the “piano movers'” problem i. the case of a two-dimensional
  rigid polygonal body moving amidst polygonal barriers.
\newblock {\em Communications on pure and applied mathematics}, 36(3):345--398,
  1983.

\bibitem{sr-bag-94}
I.~R. Shafarevich and M.~Reid.
\newblock {\em Basic algebraic geometry}, volume~2.
\newblock Springer, 1994.

\bibitem{siciliano2009robotics}
B.~Siciliano, L.~Sciavicco, L.~Villani, and G.~Oriolo.
\newblock {\em Robotics: modelling, planning and control}.
\newblock Springer Science and Business Media, 2009.

\bibitem{song2017low}
Z.~Song, D.~P. Woodruff, and P.~Zhong.
\newblock Low rank approximation with entrywise l1-norm error.
\newblock In {\em Proc. 49th Annual ACM SIGACT Symposium on Theory of Computing
  (STOC)}, pages 688--701, 2017.

\bibitem{Tho69}
R.~Thom.
\newblock Ensembles et morphismes stratifi{\'e}s.
\newblock {\em Bull. Am. Math. Soc.}, 75(2):240--284, 1969.

\bibitem{Tib99}
M.~Tib{\u{a}}r.
\newblock Regularity at infinity of real and complex polynomial functions.
\newblock {\em London Mathematical Society Lecture Note Series}, pages
  249--264, 1999.

\bibitem{Tib07}
M.~Tib{\u{a}}r.
\newblock {\em Polynomials and vanishing cycles}, volume 170.
\newblock Cambridge University Press, 2007.

\bibitem{vdWaerden-alebra-37}
B.~L. Van~der Waerden.
\newblock Moderne algebra.
\newblock 1937.

\bibitem{varchenko1972theorems}
A.~N. Varchenko.
\newblock Theorems on the topological equisingularity of families of algebraic
  varieties and families of polynomial mappings.
\newblock {\em Izvestiya Rossiiskoi Akademii Nauk. Seriya Matematicheskaya},
  36(5):957--1019, 1972.

\bibitem{verdier1976stratifications}
J.-L. Verdier.
\newblock Stratifications de {W}hitney et th{\'e}oreme de {B}ertini-{S}ard.
\newblock {\em Invent. Math.}, 36(1):295--312, 1976.

\bibitem{vui2008critical}
H.~H. Vui and P.~T. Son.
\newblock Critical values of singularities at infinity of complex polynomials.
\newblock {\em Vietnam Journal of Mathematics}, 36(1):1--38, 2008.

\bibitem{wallace1971linear}
A.~Wallace.
\newblock Linear sections of algebraic varieties.
\newblock {\em Indiana Univ. Math. J.}, 20(12):1153--1162, 1971.

\bibitem{Yap-algebra-book}
C.-K. Yap.
\newblock {\em Fundamental problems of algorithmic algebra}.
\newblock Oxford University Press, New York, 2000.

\bibitem{Zah96}
A.~Zaharia.
\newblock On the bifurcation set of a polynomial function and {N}ewton
  boundary. {II}.
\newblock {\em Kodai Math. J.}, 19(2):218--233, 1996.

\end{thebibliography}
\def\cprime{$'$}

\appendix

\newpage
\section{Useful results and bounds}
\label{sec:useful-results-bounds}

\subsection{Multivariate polynomial multipication}

We need the following result(s) on multivariate polynomial multiplication. For the rather straightforward proofs we refer the reader to \cite{mst-srus-17}.

\begin{claim}[Output bounds on polynomial multiplication]
  \label{claim:mul-mpoly}
  The following bounds holds:
  \begin{enumerate}[(i)]
  \item Consider two multivariate polynomials, $f_1$ and $f_2$, in $\nu$
    variables of total degrees $\delta$, having bitsize $\tau_1$ and
    $\tau_2$, respectively.
    Then $f = f_1f_2$ is a polynomial in $\nu$ variables,
    of total degree $2 \delta$
    and bitsize $\tau_1 + \tau_2 + 2\, \nu \, \lg(\delta)$.
  \item Using induction, the product of $m$ polynomials in $\nu$ variables,
    $f = \prod_{i=1}^{m}f_i$, each of total degree  $\delta_i$
    and bitsize $\tau_i$, is a polynomial of
    total degree $\sum_{i=1}^{m}{\delta_i}$ and bitsize
    $\sum_{i=1}^{m}\tau_i + 12 \, \nu\, m\, \lg(m) \,
    \lg(\sum_{i=1}^{m}{\delta_i})$.

  \item Let $f$ be a polynomial in $\nu$ variables of total degree $\delta$
    and bitsize $\tau$.
    The $m$-th power of $f$, $f^m$, is a polynomial
    of total degree $m \delta$ and bitsize $m \tau + 12 \nu m \lg(\delta)$.
  \end{enumerate}
\end{claim}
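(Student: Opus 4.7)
The plan is to prove the three parts sequentially: (i) by a direct coefficient count, (ii) by induction applying (i) repeatedly, and (iii) by a direct combinatorial expansion rather than specializing (ii). None of the three presents any serious obstacle; the only mild care required is to ensure that the claimed constants absorb the accumulated logarithmic factors.

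First I would dispatch (i). Writing $f_1 f_2 = \sum_{\bm{c}} \bigl( \sum_{\bm{a}+\bm{b}=\bm{c}} c^{(1)}_{\bm{a}} c^{(2)}_{\bm{b}} \bigr) \bm{x}^{\bm{c}}$, the degree bound $2\delta$ is immediate. Each individual product of coefficients has bitsize at most $\tau_1+\tau_2$, and the number of terms in the inner sum is bounded by the number of exponent vectors in the support of $f_1$, which in turn is at most the number of monomials of degree at most $\delta$ in $\nu$ variables, namely $\binom{\nu+\delta}{\nu} \leq (\delta+1)^\nu$. Summing at most $(\delta+1)^\nu$ integers of bitsize $\tau_1+\tau_2$ inflates the bitsize by at most $\nu \lg(\delta+1) \leq 2\nu \lg \delta$ (for $\delta \geq 2$), as required.

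For (ii), I would induct on $m$. Let $h_k := \prod_{i \leq k} f_i$, set $D_k := \sum_{i \leq k} \delta_i$, and let $T_k$ denote the bitsize of $h_k$. Since both $h_{k-1}$ and $f_k$ have degree at most $D_m$, applying (i) to the pair yields $T_k \leq T_{k-1} + \tau_k + 2\nu \lg D_m$. Telescoping over $k = 2, \ldots, m$ produces $T_m \leq \sum_{i=1}^m \tau_i + 2\nu(m-1)\lg D_m$, which is comfortably absorbed by the claimed $\sum_i \tau_i + 12\nu m \lg(m) \lg(D_m)$.

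Finally, for (iii), rather than specializing (ii) I would use a direct argument: the coefficient of any $\bm{x}^{\bm{b}}$ in $f^m$ is a sum over ordered tuples $(\bm{a}_1, \ldots, \bm{a}_m)$ with $\bm{a}_1 + \cdots + \bm{a}_m = \bm{b}$ of products $c_{\bm{a}_1}\cdots c_{\bm{a}_m}$; fixing the first $m-1$ exponent vectors determines the last, so the number of such tuples is at most $(\delta+1)^{\nu(m-1)}$. Each product has bitsize at most $m\tau$, so the whole coefficient has bitsize at most $m\tau + \nu(m-1)\lg(\delta+1) \leq m\tau + 2\nu m \lg \delta$, well within the claimed $m\tau + 12\nu m \lg \delta$. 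The only thing to track is the constant bookkeeping in (ii), but the slack factor in $12\nu m \lg m$ leaves ample room.
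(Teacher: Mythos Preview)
Your proof is correct. The paper itself does not supply a proof of this claim: it simply states the bounds and refers the reader to \cite{mst-srus-17} for the ``rather straightforward proofs.'' Your argument is exactly the standard one --- bound each coefficient of the product as a sum of at most $(\delta+1)^\nu$ terms, then telescope for (ii) and count ordered exponent tuples directly for (iii) --- and the constant bookkeeping is handled with adequate slack throughout.
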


\subsection{Intersecting a variety}

\begin{theorem}[{\cite[Cor.~1.13, Sec.~6, Chapter~1]{sr-bag-94}}]
  \label{thm:irr-X-inter-f}
  Consider an irreducible projective variety $V \subset \PP^n$ of dimension $\delta$.
  If  a homogeneous polynomial $f$ does not vanish on $V$, then
  $\dim(V \cap \VV(f)) = \delta -1$.
  In addition, all the components of the intersection have the same dimension.
\end{theorem}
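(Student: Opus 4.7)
The plan is to combine Krull's Hauptidealsatz with the projective dimension theorem, handling the upper bound and lower bound separately.

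For the upper bound, I would argue that since $f$ does not vanish on $V$, the set $V \cap \VV(f)$ is a proper Zariski-closed subset of $V$. Irreducibility of $V$ then forces every proper closed subset to have strictly smaller dimension, so $\dim(V \cap \VV(f)) \leq \delta - 1$. This part is immediate.

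For the lower bound and pure-dimensionality, I would work locally in affine charts. Choose a standard affine chart $U_i \subset \PP^n$ on which $V$ is not contained in the hyperplane at infinity, and let $\tilde V = V \cap U_i$; its coordinate ring $A = \cO(\tilde V)$ is a finitely generated integral $\KK$-algebra of Krull dimension $\delta$. The dehomogenization $\tilde f$ of $f$ with respect to the chart is a nonzero element of $A$ (by hypothesis), and $\tilde V \cap \VV(\tilde f) = V(\tilde f A) \subset \tilde V$. By Krull's Hauptidealsatz, every minimal prime of $A/\tilde f A$ has height exactly $1$; equivalently, every irreducible component of $\tilde V \cap \VV(\tilde f)$ has codimension exactly $1$ in $\tilde V$, hence dimension $\delta - 1$. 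Covering $V$ by such charts and using that irreducible components of $V \cap \VV(f)$ intersect at least one chart nontrivially, I conclude that every component has dimension $\delta - 1$.

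The remaining point is to ensure that $V \cap \VV(f)$ is nonempty (so that the stated equality $\dim = \delta -1$ is literally correct, rather than vacuous). In the projective setting this is automatic as soon as $\delta \geq 1$: the projective dimension theorem guarantees that an irreducible projective variety of positive dimension meets every hypersurface. If $\delta = 0$, then $V$ is a single point not in $\VV(f)$, and the intersection is empty; in this edge case one interprets ``dimension $-1$'' as the empty-set convention, matching $\delta - 1$.

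The main obstacle, conceptually, is not the algebra (Krull's theorem does the heavy lifting) but matching the projective and affine statements carefully, in particular being precise about the nonemptiness assertion and making sure that no irreducible component of the intersection is ``lost'' at infinity when passing to an affine chart. The standard fix is to use finitely many charts whose union covers $V$, together with the fact that each component meets at least one such chart in a nonempty open subset, and then transport the codimension-$1$ conclusion back to $V$.
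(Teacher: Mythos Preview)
The paper does not give its own proof of this statement; it is quoted as a reference result from Shafarevich (Cor.~1.13, Sec.~6, Ch.~1) and used as a black box in the proofs of Lemma~\ref{lem:projV-intersect-H} and Proposition~\ref{prp:affV-intersect-H}. Your proposal is a correct and standard argument for this classical fact: the upper bound via irreducibility, the lower bound and pure-dimensionality via Krull's Hauptidealsatz in affine charts, and nonemptiness via the projective dimension theorem are exactly the ingredients used in Shafarevich's treatment, so there is nothing to contrast.
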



The following lemma is similar to the ones in \cite[Chapter~3]{garg_special_2020}.

\begin{lemma}
  \label{lem:projV-intersect-H}
  Consider a projective variety $V \in \PP^n$
  of dimension $\delta$ and degree $D$.
  Let $S$ be finite set of integers such that $0 \not\in S$.
  Also consider the linear form $h(\x) = \sum_{i=0}^nc_i x_i$ and $H = \VV(h)$.
  If we choose the $c_i$'s independently and uniformly at random from $S$, then
  \[ \Pr[ \dim(V \cap H) = \delta - 1 ] \geq 1 - \tfrac{D}{\abs{S}}.
    \]

    Moreover, there is a specialization of the $c_i$'s  such that
    their bitsize is $\lceil \lg( \kappa D) \rceil + 1$, where $\kappa \geq 2$ is a constant,
    so that it holds $\dim(V \cap H) = \delta -1$.
\end{lemma}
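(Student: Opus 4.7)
The plan is to reduce the problem to a union bound over the top-dimensional irreducible components of $V$, applying Theorem~\ref{thm:irr-X-inter-f} componentwise and then invoking a Schwartz--Zippel-type argument on the linear form $h$ evaluated at witness points.

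First, I decompose $V = V_1 \cup \cdots \cup V_m$ into its irreducible components, and let $V_1,\dots,V_k$ be those of maximal dimension $\delta$. Since the degree $D$ of $V$ is (at least) the sum of the degrees of the top-dimensional components, and each irreducible component has degree $\geq 1$, we have $k \leq D$. Next, I observe that whenever no $V_i$ with $i \leq k$ is contained in $H$, Theorem~\ref{thm:irr-X-inter-f} applied to each such $V_i$ gives $\dim(V_i \cap H) = \delta - 1$. For the lower-dimensional components $V_j$, $j > k$, we trivially have $\dim(V_j \cap H) \leq \dim V_j < \delta$, so that $\dim(V_j \cap H) \leq \delta - 1$ in all cases. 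Taking the maximum over all components yields $\dim(V \cap H) = \delta - 1$, which is exactly what we want.

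For the probabilistic step, I pick a closed point $p_i \in V_i$ for each $i \leq k$. Since $p_i$ is a projective point, at least one of its homogeneous coordinates, say $p_{i,j_0}$, is nonzero. The condition $h(p_i) = 0$ is a nontrivial linear equation $\sum_{j=0}^n c_j p_{i,j} = 0$ in the random coefficients $c_0,\dots,c_n$, and fixing the $c_j$ for $j \neq j_0$ leaves at most one value of $c_{j_0}$ satisfying it; hence $\Pr[h(p_i) = 0] \leq 1/|S|$. Since $V_i \subseteq H$ forces $h(p_i) = 0$, a union bound over the $k \leq D$ top-dimensional components yields
\[
\Pr\bigl[\,\exists\, i \leq k \text{ with } V_i \subseteq H\,\bigr] \;\leq\; \tfrac{D}{|S|},
\]
and combining with the preceding paragraph gives $\Pr[\dim(V \cap H) = \delta - 1] \geq 1 - D/|S|$. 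The hypothesis $0 \notin S$ only serves to rule out the trivial linear form $h \equiv 0$.

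For the existential part, I take $S$ to be the integers in $[-\kappa D,\kappa D]\setminus\{0\}$, so that $|S| = 2\kappa D$ and the failure probability is at most $1/(2\kappa) < 1$ for $\kappa \geq 2$. Hence a successful specialization exists, with each $c_i$ of absolute value at most $\kappa D$ and therefore of bitsize at most $\lceil \lg(\kappa D)\rceil + 1$ (the extra bit absorbing the sign). The main point that requires care, rather than a genuine obstacle, is the handling of the non-equidimensional case: one must argue carefully that lower-dimensional components $V_j$ cannot contribute any new ``bad'' event, since even if they are absorbed into $H$ they only yield intersection dimension $\leq \dim V_j \leq \delta - 1$, so only the top-dimensional components enter the union bound and only they contribute to the $D/|S|$ estimate.
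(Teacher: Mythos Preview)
Your proposal is correct and follows essentially the same approach as the paper: decompose into irreducible components, apply Theorem~\ref{thm:irr-X-inter-f} componentwise, choose a witness point on each relevant component, bound $\Pr[h(p_i)=0]\le 1/|S|$ via a Schwartz--Zippel argument, and conclude by a union bound. The only cosmetic differences are that you restrict the union bound to the top-dimensional components (the paper runs it over all components), and that for the existential part you take $S=[-\kappa D,\kappa D]\setminus\{0\}$ whereas the paper takes the first $\kappa D$ positive integers; both choices yield the stated bitsize bound.
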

\begin{proof}
  Let $V = \bigcup_{j=1}^r V_j$ be the irreducible decomposition of $V$,
  where $\deg(V_j) \geq 1$ and $\dim(V_j) = \delta_j$.
  Notice that $r \leq D$, as the number of components is bounded by the degree of the variety;
  in addition, $D = \sum_{i=j}^r \deg(V_j)$.
  Let $h(\x) = \sum_{i=0}^n c_i x_i$,
  where we consider the $c_i$'s as intermediates that we will specialize in the sequel.

  First, we consider the component $V_j$.
  Due to Theorem~\ref{thm:irr-X-inter-f}, $\dim(V_j \cap H) = \delta_j -1$
  or $\dim(V_j \cap H) = \delta_j$.
  In the latter case $V_j \subseteq H$ and so if $\p_j = (p_{j0}, \dots, p_{jn})  \in V_j$, then
  it holds $\p_j \in H$.

  We evaluate $h$ at $\p_j$, that is  $h_j = \sum_{i=0}^n c_i p_{ji}$
  and we consider $h_j$ as a polynomial in the $c_i$'s.
  Then $h_j$  has
  at most $\abs{S}^n$ solutions in the grid $\underbrace{S \times \cdots \times S}_{n+1} = S^{n+1}$.
  In other words there, are at most $\abs{S}^n$ specializations of the $c_i$'s
  in the grid $S^{n+1}$ such that $\dim(V_j \cap H) = \delta_j$.
  Consequently,
  \[ \Pr[\dim(V_j \cap H) = \delta_j] \leq \tfrac{\abs{S}^n}{\abs{S}^{n+1}}  = 1/\abs{S} .\]
  Thus
  \[
    \Pr[\bigcup_{j=1}^{r} \{ \dim(V_j \cap H) = \delta_j \} ] \leq
    \sum_{j=1}^{r} \Pr[\dim(V_j \cap H) = \delta_j] \leq
    \tfrac{r}{\abs{S}} \leq \tfrac{D}{\abs{S}},
  \]
  which implies
  \[
    \Pr[\dim(V \cap H) = \delta -1] =
    1 -  \Pr[\bigcup_{j=1}^{r} \{ \dim(V_j \cap H) = \delta_j \}]
    \geq 1 - \frac{D}{\abs{S}}.
  \]

  If we choose as $S$ the first $\kappa D$ positive integers, where $\kappa \geq 2$ is a constant,
  then there is at least one specialization of  the $c_i$'s in the grid $S^{n+1}$
  such that $\dim(V_j \cap H) = \delta_j - 1$, for all $j \in [r]$;
  and so the bound on the bitsize follows.
\end{proof}

%
\begin{proof}[Proof of Proposition~\ref{prp:affV-intersect-H}]
  In the affine case, we have to also consider the case $V \cap H = \emptyset$,
  in addition to the two cases of Theorem.~\ref{thm:irr-X-inter-f}.
  Let $\bar{V}$ and $\bar{H}$ be the projective closures of $V$ and $H$, respectively.
  It holds $\dim(\bar{V}) = \delta$ and $\deg(\bar{V}) = D$;
  also $\bar V \cap \bar H \not= \emptyset$, expect if $\dim(\bar V) = 0$.

  If $V \cap H = \emptyset$, then intersection $\bar{V} \cap \bar{H}$
  occurs at the hyperplane at infinity, say $L_{\infty}$.

  If we want to hold $\dim(V \cap H) = \delta =1$, then
  (i) $\dim(\bar V \cap \bar H) = \delta -1$ and
  (ii) $\bar V \cap \bar H \not\subseteq L_{\infty}$, unless $\bar V \cap \bar H = \emptyset$.

  \emph{The second condition implies
    if $V \cap H = \emptyset$, then $\bar V \cap \bar H$ is contained at infinity.
    Thus, the dimension of $\bar V \cap \bar H$ is $\dim(\bar V)$
    and because affine varieties and their projective closures have the same dimension,
    it also holds that $\dim(V \cap H) = \delta$.
    We want to avoid this situation.
    }

  For the first condition, as $\bar V$ and $\bar H$ are projective varieties,
  we can use Lemma~\ref{lem:projV-intersect-H}. Thus
  \[ \Pr  \geq 1 - \frac{D}{\abs{S}} \]
  and we can also select $c_i$'s.

  For the second condition, we notice that $\bar V \not\subseteq L_{\infty}$.
  Hence, by Theorem~\ref{thm:irr-X-inter-f}, $\dim(\bar \cap L_{\infty}) = \delta -1$.
  Now, consider the projective varieties
  $\bar V \cap L_{\infty}$ and $\bar H$.

  If it happens, following Lemma~\ref{lem:projV-intersect-H} that
  \[
    \Pr[ \dim( \bar V \cap L_{\infty} \cap \bar H) = \delta -2 ] \geq 1 - \frac{D}{\abs{S}}
  \]
  and we can also find appropriate $c_i$'s.

  If it happens that $\dim( \bar V \cap L_{\infty} \cap \bar H) = \delta -2$,
  then  $\bar V \cap \bar H \not\subseteq L_{\infty}$.
  This is so, because otherwise
  $\bar V \cap L_{\infty} \cap \bar H = \bar V \cap \bar H$,
  where the dimensions are $\delta -2$ and $\delta -1$, respectively.
  This is a contradiction.

  Using the union bound, we deduce the required probability bound.
\end{proof}

\subsection{Reduction to a square system}
\label{sec:make-square-sys}

Our techniques on bounding the bitsize and the degrees of the various
polynomials appearing in the polynomial optimization problems rely 
on exploiting the properties of the resultant. However, 
to use the resultant the corresponding polynomial systems have to have as many equations as variables; that is we need to compute with square systems. 
In the optimization problems that we are interested in, in
almost all the cases, the number of polynomials is bigger than the
number of unknowns. 

We are able to treat this case using a result due to Giusti and Heinz
\cite{GiHe93}. Let $f_1,\dots,f_p\in\Q\left[x_1,\dots,x_n\right]$ be
polynomials of positive degree, bounded by $d$. Denote by $V$ the algebraic
variety $\VV(f_1,\dots,f_p) \subseteq \CC^n$. Given $\eta\in\overline{\Q}$, we denote
by $\hat{f}_\eta$ the linear combination
$f_1+\eta^1f_2+\dots+\eta^{p-1}f_p$.

\begin{theorem}
  \cite[Section 3.4.1]{GiHe93}
  \label{thm:make-square}
  Let $\Gamma\subset\Z$ of cardinal $p\, d^n+1$. There exists
  $\gamma=\left(\gamma_1,\dots,\gamma_n\right)\in\Gamma^n$ such that
  each irreducible component of $\hat{V} =
  \VV(\hat{f}_{\gamma_1},\dots, \hat{f}_{\gamma_n})$ is either a
  component of $V$ or a point.
\end{theorem}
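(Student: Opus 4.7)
The strategy is to build up the system $(\hat{f}_{\gamma_1},\ldots,\hat{f}_{\gamma_n})$ one equation at a time, maintaining the invariant that at step $k$ every irreducible component of $\hat{V}_k := \mathbb{V}(\hat{f}_{\gamma_1},\ldots,\hat{f}_{\gamma_k})$ that is \emph{not} contained in $V$ has dimension at most $n-k$. Since $\hat{f}_\eta$ vanishes on $V$ for every $\eta$, we always have $V \subseteq \hat{V}_k$, and after $n$ steps the new components are $0$-dimensional, i.e., points. A final easy argument shows that every irreducible component of $V$ survives as a maximal component of $\hat{V}_n$ (a positive-dimensional component of $V$ cannot be swallowed by a new component since those are points; and $0$-dimensional components of $V$ are already allowed to appear among the ``point'' components).

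The core of the argument is a dimension-drop lemma: given an irreducible component $W$ of $\hat{V}_{k-1}$ with $W \not\subseteq V$, I want to choose $\gamma_k \in \Gamma$ so that $\hat{f}_{\gamma_k}$ does not vanish identically on $W$; by Krull's principal ideal theorem this forces every component of $W \cap \mathbb{V}(\hat{f}_{\gamma_k})$ to have dimension $\dim W - 1$. To count bad values of $\gamma_k$, I pick any $q \in W$ and some index $i \in [p]$ with $f_i(q) \neq 0$ (which exists because $W \not\subseteq V$). The function
\[
P_{W,q}(\eta) \;:=\; \hat{f}_\eta(q) \;=\; \sum_{j=1}^{p} \eta^{j-1} f_j(q) \;\in\; \mathbb{C}[\eta]
\]
is a nonzero univariate polynomial of degree at most $p-1$ (its coefficient of $\eta^{i-1}$ is $f_i(q) \neq 0$), hence has at most $p-1$ roots. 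For any $\gamma_k$ outside these roots, $\hat{f}_{\gamma_k}(q) \neq 0$, so $\hat{f}_{\gamma_k}$ does not vanish on $W$.

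To finish step $k$, I bound the number of new components of $\hat{V}_{k-1}$. By the Bezout inequality, since $\hat{V}_{k-1}$ is an intersection of $k-1 \leq n-1$ hypersurfaces each of degree at most $d$, the sum of the degrees of its irreducible components satisfies $\deg(\hat{V}_{k-1}) \leq d^{k-1} \leq d^{n-1}$, and in particular the number of irreducible components is at most $d^{n-1}$. Summing the per-component bound, the set of ``bad'' values of $\gamma_k \in \mathbb{C}$ has cardinality at most $(p-1) d^{n-1} < p\,d^n$. Since $|\Gamma| = p\,d^n + 1$, at least one $\gamma_k \in \Gamma$ avoids all bad values, and the invariant propagates. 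Iterating for $k=1,\ldots,n$ produces the required tuple $\gamma$.

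The main obstacle is a subtle one: one must be careful that once a new component has become a point, adding further equations does not recreate higher-dimensional non-$V$ components (it cannot, since all components of $\hat{V}_k$ arise as components of intersections of components of $\hat{V}_{k-1}$ with a hypersurface), and that components of $V$ are genuinely maximal in $\hat{V}_n$ rather than absorbed into larger new components; the latter is ruled out by the invariant combined with the irreducibility of components of $V$. The rest is bookkeeping around the Bezout bound, which is the only place the quantity $d^n$ enters.
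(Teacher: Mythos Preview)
Your proposal is correct and follows essentially the same route as the paper: induction on the number of equations, a witness point in each component not contained in $V$ yielding a nonzero univariate polynomial in $\eta$ of degree at most $p-1$, the B\'ezout bound to control the number of such components, and Krull's principal ideal theorem for the dimension drop. The only minor difference is that you invoke the sharper bound $d^{k-1}\le d^{n-1}$ on the number of irreducible components of $\hat V_{k-1}$, whereas the paper uses the cruder $d^{n}$; either fits comfortably under $|\Gamma|=p\,d^{n}+1$.
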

\begin{proof}  
  First, it is clear that for all $x\in V$, any linear combination of
  the polynomials $f_1,\dots,f_p$ vanishes at $x$.

  Then for $1\leq i\leq n$ and
  $\left(\gamma_1,\dots,\gamma_i\right)\in\Gamma^i$, let us denote by
  $\hat{V}_{i}$ the variety
  $\VV(\hat{f}_{\gamma_1},\dots,\hat{f}_{\gamma_{i}})$.  We prove
  by induction that for $1\leq i\leq n$, there exist points
  $\gamma_1,\dots,\gamma_i\in\Gamma$ such that the dimension of any
  irreducible component of $\hat{V}_{i}$ not contained in $V$ is
  $n-i$.

  The case $i=1$ is obvious. Let $i>1$ and assume that the result is
  proved for $i-1$. Let
  $\hat{V}_{i-1}=\VV(\hat{f}_{\gamma_1},\dots,\hat{f}_{\gamma_{i-1}})$
  and let $C$ be an irreducible component of $\hat{V}_{i-1}$ such that
  $C\not\subset V$. Let $x_C$ be a point in $C\setminus V$, so that at
  least one polynomial among $f_1,\dots,f_p$ does not vanish at
  $x_C$. Let $T$ be a new indeterminate and let
  \[w_C\left(T\right)=f_1\left(x_C\right) + T f_2\left(x_C\right) + \dots + T^{p-1} f_p\left(x_C\right) \in
  \overline{\Q}\left[T\right].\] 
  By construction, $w_C$ is not identically zero. Hence, so is the
  product $P_i\left(T\right) = \prod_{C} w_C\left(T\right)$, 
  for all irreducible component $C$ of
  $\hat{V}_{i-1}$ not included in $V$. By the B\'ezout bound, there
  are at most $d^n$ such components. Hence, the polynomial
  $P_i\left(T\right)$ has degree at most $pd^n$ so that it has at most
  $pd^n$ roots. Since $\Gamma$ has cardinal $pd^n+1$, there exists
  $\gamma_i\in\Gamma$ such that $P_i\left(\gamma_i\right)\neq 0$. Let
  $\hat{f}_{\gamma_i} = f_1 + \gamma_i f_2 + \dots + \gamma_i^{s-1}f_p$.

  Then for any $x_C$ as above, $\hat{f}_{\gamma_i}\left(x_C\right)\neq 0$. 
  In particular, this means that
  $C\not\subset\VV(\hat{f}_{\gamma_i})$.

  By Krull's Principal Ideal Theorem \cite[Corollary 3.2
  p. 131]{kunz}, this implies that the intersection
  $C\cap\VV(\hat{f}_{\gamma_i})$ is either empty or
  equidimensional of dimension $\dim C - 1$. By induction assumption,
  $C$ has dimension $n-i+1$. Hence,
  $C\cap\VV(\hat{f}_{\gamma_i})$ is either empty or of
  equidimensional of dimension $n-i$. Since this is true for any
  irreducible component $C$ of $\hat{V}_{i-1}$ not contained in $V$,
  this proves that the dimension of any irreducible component of
  $\hat{V}_{i}$ not contained in $V$ is $n-i$.

  In particular for $i=n$, this proves that the irreducible components
  of $\hat{V}=\hat{V}_{n}$ not contained in $V$ have dimension $0$, so
  that it is necessarily a point.
\end{proof}

\begin{remark}
	\label{rem:make-square}
	The previous theorem implies that if we are given a polynomial system that it is not necessarily square, then we can make square by considering a generic linear combination. 
	The price that we pay for this is that the bitsize of the polynomials becomes $\OO(\tau + n \lg(p d)) = \sOO(\tau + n)$ from $\tau$ and that we add additional points to the variety. 
	For out purposes, the increase in the in bitsize is negligible. 
\end{remark}

\end{document}